\newtheorem{theorem}{Theorem}[section]
\newtheorem{proposition}[theorem]{Proposition}
\newtheorem{corollary}[theorem]{Corollary}
\newtheorem{lemma}[theorem]{Lemma}
\theoremstyle{definition}
\def\es{{\SS^1_{\ell}}}
\def\tv{\widetilde{v}}
\theoremstyle{remark}
\newtheorem{remark}[theorem]{Remark}
\numberwithin{equation}{section}
\newcommand{\al}{\alpha}
\newcommand{\be}{\beta}
\newcommand{\de}{\delta}
\newcommand{\ep}{\varepsilon}
\newcommand{\ga}{\gamma}
\newcommand{\ka}{\kappa}
\newcommand{\la}{\lambda}
\newcommand{\om}{\omega}
\newcommand{\si}{\sigma}
\newcommand{\te}{\theta}
\newcommand{\vp}{\varphi}
\newcommand{\De}{\Delta}
\newcommand{\Ga}{\Gamma}
\newcommand{\La}{\Lambda}
\def\th{{^{\mathrm{th}}}}
\def\NN{\mathbb{N}}
\def\RR{\mathbb{R}}
\def\ZZ{\mathbb{Z}}
\def\TT{\mathbb{T}}
\renewcommand\SS{\mathbb{S}}
\newcommand{\cA}{{\mathcal A}}
\newcommand{\cE}{{\mathcal E}}
\newcommand{\cI}{{\mathcal I}}
\newcommand{\cN}{{\mathcal N}}
\newcommand{\cO}{{\mathcal O}}
\newcommand{\cP}{{\mathcal P}}
\newcommand{\cR}{{\mathcal R}}
\newcommand{\cS}{{\mathcal S}}
\newcommand{\cT}{{\mathcal T}}
\newcommand{\pd}{\partial}
\newcommand\minus\backslash
\newcommand{\ms}{\mspace{1mu}}
\DeclareMathOperator\Div{div}
\DeclareMathOperator\dist{dist}
\newcommand\DD{\mathbb D}
\renewcommand\leq\leqslant
\renewcommand\geq\geqslant
\newlength{\intwidth}
 \DeclareMathOperator\curl{curl}
\begin{document}

\title[Beltrami fields with hyperbolic orbits and invariant tori]{Beltrami fields with
  hyperbolic periodic orbits \\ enclosed by knotted
  invariant tori}

\author{Alberto Enciso}
\address{Instituto de Ciencias Matem\'aticas, Consejo Superior de
  Investigaciones Cient\'\i ficas, 28049 Madrid, Spain}
\email{aenciso@icmat.es}

\author{Alejandro Luque}
\address{Department of Mathematics, Uppsala University, 751 06
  Uppsala, Sweden}
\email{alejandro.luque@math.uu.se}

\author{Daniel Peralta-Salas}
\address{Instituto de Ciencias Matem\'aticas, Consejo Superior de
  Investigaciones Cient\'\i ficas, 28049 Madrid, Spain}
\email{dperalta@icmat.es}

\date{\today}
%
%
\begin{abstract}
We prove that there exist Beltrami fields in Euclidean space, with 
{sharp decay}
at infinity, which have a
prescribed set of invariant tori (possibly knotted or linked) that
enclose an arbitrarily large number of hyperbolic periodic
orbits. These hyperbolic orbits are cablings over the core curve of
each torus. Moreover, the domain bounded by each invariant torus is covered by an almost full measure set of invariant tori. We show that an analogous result holds for high-frequency Beltrami fields on the flat torus $\TT^3$.
\end{abstract}
\maketitle

\section{Introduction}

Beltrami flows are divergence-free vector fields that satisfy the
equation
\begin{equation}\label{Beltrami}
\curl u=\la u
\end{equation}
in $\RR^3$ with a constant proportionality factor~$\la$. They have long played a fundamental role
in fluid mechanics: although it is classical that they are stationary
solutions of the Euler equations,
\[
\pd_t u+(u\cdot\nabla)u=-\nabla p\,,\qquad \Div u=0\,,
\]
their true relevance was unveiled by Arnold through his celebrated
structure theorem for inviscid fluids in equilibrium. It roughly
asserts that, under mild technical assumptions, a stationary solution of the
Euler equations is either integrable or a Beltrami field. More
precisely, one has the following:

\begin{theorem}[Arnold's structure theorem \cite{Ar65,AK}]
Let $u$ be an analytic stationary solution of the Euler equations in $\RR^3$
that is bounded as
\[
|u(x)|+|\curl u(x)|< C(1+|x|)\,.
\]
If $u\times\curl u$ is not identically zero, $u$ admits an analytic
first integral whose regular level sets are tori, cylinders or planes.
\end{theorem}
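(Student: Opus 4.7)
The plan is to produce the first integral explicitly as the Bernoulli function $H:=p+\tfrac{1}{2}|u|^2$. First I would apply the standard vector identity $(u\cdot\nabla)u=\tfrac{1}{2}\nabla|u|^2-u\times\curl u$ to the stationary Euler equation and obtain
\[
\nabla H \;=\; u\times\curl u\,.
\]
Since $u$ is analytic and $p$ is determined by $u$ up to an additive constant via $\nabla p=-(u\cdot\nabla)u$, the function $H$ is analytic on $\RR^3$, and the hypothesis that $u\times\curl u$ is not identically zero is precisely the statement that $H$ is non-constant. Taking dot products of the identity with $u$ and with $\curl u$ gives $u\cdot\nabla H=0=\curl u\cdot\nabla H$, so $H$ is conserved along the orbits of both fields.

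Next I would analyse the regular level sets $\Sigma_c:=\{H=c\}\cap\{dH\neq 0\}$. Because $H$ is a non-constant real-analytic function, its set of critical values is a proper analytic subset of $\RR$, so for almost every $c\in\RR$ the set $\Sigma_c$ is a smooth analytic $2$-submanifold of $\RR^3$. On $\Sigma_c$ both $u$ and $\curl u$ are tangent, and the identity $u\times\curl u=\nabla H\neq 0$ on $\Sigma_c$ forces them to be linearly independent at every point. Consequently the tangent bundle of $\Sigma_c$ is globally trivialised by $(u,\curl u)$, each connected component has zero Euler characteristic, and $\Sigma_c$ is orientable with transverse co-orientation provided by $\nabla H$.

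Finally I would classify the topology of each connected component of $\Sigma_c$. A compact connected component is an orientable closed surface of Euler characteristic zero, hence a $2$-torus. For the non-compact components the growth hypothesis $|u(x)|+|\curl u(x)|<C(1+|x|)$ enters in an essential way: it controls $|\nabla H|=|u\times\curl u|$ on $\RR^3$, ensures that $\Sigma_c$ is properly embedded in $\RR^3$, and guarantees that the tangent vector fields $u|_{\Sigma_c}$ and $\curl u|_{\Sigma_c}$ are complete. A connected, orientable, properly embedded surface in $\RR^3$ whose tangent bundle admits such a complete global framing is diffeomorphic either to a cylinder $\SS^1\times\RR$ or to the plane $\RR^2$, which yields the three allowed topological types.

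The main obstacle is precisely this last topological classification in the non-compact regime: one must rule out infinite-genus, infinitely-ended or otherwise exotic surfaces that \emph{a priori} carry a nowhere vanishing framing, and this is where the polynomial bound is indispensable rather than cosmetic. My plan would therefore be to spend most of the work on the non-compact case, using propriety and completeness of the frame $(u,\curl u)$ to reduce the end structure of each component to that of a cylindrical or planar end.
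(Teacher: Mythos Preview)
The paper does not prove this theorem; it is stated in the Introduction as a classical result of Arnold, with citations to \cite{Ar65,AK}, and serves only as background motivation. There is therefore no ``paper's own proof'' to compare your proposal against.

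On the proposal itself: the Bernoulli-function approach is the standard one, and your treatment of the compact case (torus via vanishing Euler characteristic) is correct. The non-compact case, however, has a genuine gap that runs deeper than the one you flag. The existence of a global framing by $(u,\curl u)$ does \emph{not} constrain the topology of an open orientable surface: every open orientable surface is parallelisable, so ``framing $+$ completeness $+$ proper embedding'' cannot by itself exclude infinite-genus or many-ended surfaces. What actually pins down the topology in Arnold's argument is that $u$ and $\curl u$ \emph{commute}: the steady vorticity equation $(u\cdot\nabla)\curl u=(\curl u\cdot\nabla)u$ is precisely the statement $[u,\curl u]=0$. Combined with completeness of both flows (and here the linear growth bound is what guarantees completeness on~$\RR^3$), this yields a locally free $\RR^2$-action on each connected component of $\Sigma_c$, and any connected surface admitting such an action is $\RR^2/\Gamma$ for a discrete subgroup $\Gamma\subset\RR^2$, hence $\TT^2$, $\SS^1\times\RR$, or $\RR^2$. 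You never invoke commutativity, and your proposed end-structure analysis would not succeed without it.
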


Motivated by H\'enon's numerical simulations~\cite{He66}, Arnold
conjectured (\cite[page 19]{Ar65} and \cite[page 347]{Ar66}) that the
hypothesis on $u\times\curl u$ is essential, and that there should be
Beltrami fields whose dynamics has an arbitrarily complicated topology
and the same complexity as a mechanical system with two degrees of
freedom. In the light of the modern theory of Hamiltonian dynamics,
which Arnold himself greatly contributed to establish, it is natural
to interpret this conjecture as the existence of invariant tori of
complicated topology enclosing many homoclinic connections that
intersect transversally. The first part of this assertion, that is, the existence of knotted invariant tori, was
established in~\cite{Acta}. This result can be stated as follows, where structural stability means that the invariant torus is
preserved up to an ambient diffeomorphism under $C^{3,\beta}$~small
divergence-free perturbations of the vector field:

\begin{theorem}[Realization theorem \cite{Acta}]\label{T.Acta}
  Let $\cT_1,\dots,\cT_N$ be a collection of (possibly knotted and
  linked) toroidal domains
  embedded in $\RR^3$ having pairwise disjoint closures. Then for any constant~$\la\neq0$
  there is a diffeomorphism~$\Phi$ of~$\RR^3$ and a Beltrami field
  satisfying $\curl u=\la u$ in $\RR^3$ such that the boundaries $\pd\Phi(\cT_1),\dots, \pd\Phi(\cT_N)$ are
  a collection of invariant tori of~$u$ on which the
  flow is quasiperiodic. These invariant tori are
  structurally stable and the Beltrami field has sharp decay at infinity, i.e. $|u(x)|<\frac{C}{1+|x|}$.
\end{theorem}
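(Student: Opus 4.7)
The plan is to proceed in three steps: (i) construct explicit local Beltrami fields in tubular neighborhoods of each prescribed toroidal domain with prescribed quasiperiodic dynamics on a nearby torus, (ii) approximate this local data globally by a Beltrami field on $\RR^3$ with the sharp decay rate, and (iii) use a KAM-type persistence argument to recover invariant tori of the correct isotopy class via an ambient diffeomorphism~$\Phi$.

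For step~(i), let $\ga_j$ denote a core curve of $\cT_j$ and work in Fermi coordinates on a narrow tube $U_j\supset\overline{\cT_j}$. A local Beltrami field $u_j$ satisfying $\curl u_j=\la u_j$ in $U_j$ can be produced by solving a boundary value problem for the operator $\curl-\la$ (which is elliptic modulo the divergence-free constraint); one tunes the boundary data so that a concentric torus $T_j$ slightly inside $U_j$ is invariant, the induced flow on $T_j$ is conjugate to a Diophantine linear flow on $\TT^2$, and the transverse twist condition required by KAM is satisfied. Since the closures $\overline{\cT_j}$ are pairwise disjoint, the $U_j$ can be chosen pairwise disjoint, giving a single Beltrami field $v=\sum_j u_j$ on $U=\bigsqcup_j U_j$. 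Step~(iii), assuming (ii), is then comparatively soft: Diophantine invariant tori carrying a twist are structurally stable under $C^{3,\beta}$ divergence-free perturbations, and the conjugating map can be realized by an ambient diffeomorphism $\Phi$ that is isotopic to the identity, so the knot/link type of each $\cT_j$ is automatically preserved by $\Phi$.

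Step~(ii) is the analytic heart of the argument and, in my view, the main obstacle. One needs a Runge-type global approximation theorem for Beltrami fields: since $\RR^3\setminus\overline U$ has no bounded connected components, the local field $v$ should be approximable, in $C^{3,\beta}(\overline U)$, by a Beltrami field $u$ defined on all of $\RR^3$ and satisfying $|u(x)|\leq C/(1+|x|)$. A naive Runge scheme produces an approximant but offers no decay control, while the rate $1/|x|$ is forced because Beltrami fields in $\RR^3$ expand in spherical vector harmonics times spherical Bessel functions, which decay precisely as $1/|x|$. The strategy I would use is a Hahn--Banach duality argument, reducing the approximation to a unique continuation statement for the $\la$-eigenspace of $\curl$ from the unbounded component of $\RR^3\setminus\overline U$, and then synthesizing the approximant as a superposition of Beltrami modes built from spherical harmonics so that the sharp decay is preserved. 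Matching the local profile on $\overline U$ while simultaneously enforcing the $1/|x|$ decay is where the main technical difficulty lies, and it is what allows the final bound $|u(x)|<C/(1+|x|)$ to be claimed in the statement.
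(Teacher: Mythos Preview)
The overall three-step architecture you describe---local construction, global approximation, KAM persistence---does match the approach of~\cite{Acta} (the present paper does not prove this theorem; it quotes it and re-uses the machinery in Sections~\ref{S.Beltramis}--\ref{S.Proof} and the Appendix). However, you have misidentified where the main difficulty lies, and your step~(i) contains a genuine gap.

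You say step~(ii), the global Runge-type approximation with sharp decay, is ``the analytic heart of the argument and \ldots\ the main obstacle.'' It is not. That result (Theorem~8.3 of~\cite{Acta}, restated verbatim in Section~\ref{S.Proof} here) is proved by essentially the duality/unique-continuation/spherical-harmonic mechanism you sketch, and while delicate it is comparatively standard. The real work is in step~(i), which you dispose of with the sentence ``one tunes the boundary data so that a concentric torus $T_j$ slightly inside $U_j$ is invariant, the induced flow on $T_j$ is conjugate to a Diophantine linear flow on $\TT^2$, and the transverse twist condition required by KAM is satisfied.'' This is the crux, and you give no mechanism for achieving it: for a tube of arbitrary shape there is no a~priori reason any boundary datum will produce a Beltrami field whose Poincar\'e map has nondegenerate twist, and no amount of ``tuning'' helps if you cannot compute the map.

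The way~\cite{Acta} closes this gap is to first replace each $\cT_j$ by a \emph{thin} tube $\cT_\ep(\ga_j)$ of width~$\ep\ll1$ around a representative $\ga_j$ of its core knot, and then carry out a detailed asymptotic expansion of the local Beltrami field in powers of~$\ep$. On a thin tube with zero normal datum the Beltrami field is the harmonic field plus an $\cO(\ep^3)$ correction; the harmonic field in turn is computed explicitly to several orders (Proposition~\ref{prop:HF} here), and its Poincar\'e map is shown to be an $\cO(\ep^3)$ perturbation of an integrable twist map whose twist is $\frac{5\ep^2}{16}\int_0^\ell\ka^2\tau\,d\al$ (see Propositions~\ref{prop:freq} and~\ref{P.tori} in the Appendix). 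For a generic curve this integral is nonzero, the twist condition holds, and Moser's theorem applies. The passage from the original $\cT_j$ to the thin $\cT_\ep(\ga_j)$, together with the small deformation coming from KAM persistence after the global approximation, is what furnishes the diffeomorphism~$\Phi$. Your sketch omits the thin-tube reduction and the entire $\ep$-expansion; without them there is no way to verify the hypotheses you are assuming in step~(i).

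A secondary point: the approximation theorem gives $C^k$-closeness only on compacta $K'\Subset K$ strictly inside the tubes, not on $\overline U$ as you write. This is why one must work with interior KAM tori rather than the boundary of the local domain; the abundance of such tori is again a consequence of the $\ep^2$ twist.
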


The interest of these questions is not merely academic. Invariant tori
of Beltrami fields (or, more generally, of the vorticity $\om:=\curl
u$ associated with a solution to the Euler equations) play a key role
in fluid mechanics, where they are known as vortex tubes. The study of
knotted vortex tubes, which goes back to Lord Kelvin in the
XIX~century~\cite{Kelvin}, is a central topic in the Lagrangian theory
of turbulence and has been extensively pursued in the last decades
(see e.g.~\cite{Kh05,Mo14} for recent accounts of the subject).
Vortex tubes of complicated knotted topologies have been
experimentally constructed in~\cite{Irvine}. The very condition that a
field satisfies the Beltrami equation (that is, that the vorticity be
proportional to the velocity) appears in the study of turbulence, and
in fact experiments and numerical simulations show that in the region
where a fluid presents turbulent behavior, the vorticity and the
velocity tend to align: this phenomenon is usually called
Beltramization~\cite{Farge,Monchaux}. An analysis of the role of
Beltrami fields and Arnold's structure theorem in the context of
laminar and turbulent fluid flows can be found
in~\cite{Yudovich,ARMA}.

Our objective in this paper is to go one step further in order to
establish Arnold's vision of Beltrami fields. More precisely, we show
that there are Beltrami fields with invariant tori of arbitrary
topology that enclose regions with any prescribed number of hyperbolic
periodic orbits. According to Katok's theorem~\cite{Katok}, the
existence of hyperbolic orbits is a necessary condition for having
positive topological entropy. Therefore, our specific goal in this
paper is to show the existence of hyperbolic trajectories enclosed by the
knotted invariant tori constructed in Theorem~\ref{T.Acta}. This is
accomplished in the following theorem. 
Concerning the statement, we recall that a curve in space is said
to be the~\emph{core knot}\/ of a toroidal domain~$\cT$ (which is unique up to an
isotopy) if the domain 
{deform retracts}
onto the curve.

\begin{theorem}\label{T.main}
Take a positive integer~$M$ and any $\de>0$. There exists a Beltrami field as in
Theorem~\ref{T.Acta} that has at least $M$~hyperbolic periodic orbits
in each of the solid invariant tori $\Phi(\cT_1), \dots,
\Phi(\cT_N)$. In each of these toroidal domains, these hyperbolic orbits are
isotopic to each other and cables of the corresponding core
knot. Moreover, these domains are covered
by a set of invariant tori~$\mathcal I_k\subset \Phi(\cT_k)$ of almost
full measure, that is, with $|\mathcal I_k|/|\Phi(\cT_k)|>1-\de$.
\end{theorem}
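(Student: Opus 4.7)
The proof will extend the Beltrami fields constructed in Theorem~\ref{T.Acta} via a local KAM-theoretic analysis in a neighborhood of each prescribed invariant torus $\pd\Phi(\cT_k)$.

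First, I would examine the structure of the field near $\pd\Phi(\cT_k)$. Since Theorem~\ref{T.Acta} yields a quasiperiodic flow on the boundary torus with a Diophantine frequency vector, the field is real-analytic and action--angle coordinates $(\rho,\te_1,\te_2)$ can be set up in a one-sided collar $\{0\leq \rho<\rho_0\}\subset \Phi(\cT_k)$ so that the vector field takes the form
\[
u = \om_1(\rho)\,\pd_{\te_1}+\om_2(\rho)\,\pd_{\te_2}+O(\rho^2)\,,
\]
with the frequency vector $(\om_1(0),\om_2(0))$ Diophantine. A non-degenerate twist of the unperturbed frequency map $\rho\mapsto\om(\rho)/|\om(\rho)|$ can be arranged by rescaling the torus and tuning the parameters in Theorem~\ref{T.Acta}, so the Poincar\'e return map to a two-dimensional transversal is a smooth, area-preserving, small perturbation of a twist map around a Diophantine invariant circle.

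Second, I would apply Moser's twist theorem to this Poincar\'e map. This yields a positive-measure Cantor family of invariant circles whose union has relative measure arbitrarily close to $1$; saturating by the flow yields the set $\cI_k\subset \Phi(\cT_k)$ with $|\cI_k|/|\Phi(\cT_k)|>1-\de$, which takes care of the almost-full-measure assertion. In the intermediate Birkhoff resonant zones between two adjacent KAM invariant circles, the Poincar\'e--Birkhoff theorem for area-preserving annular twist maps produces at least one hyperbolic and one elliptic periodic orbit of each rational rotation number $p/q$ in the gap. The associated periodic orbit of the flow is by construction a $(p,q)$-cable of the core knot of $\cT_k$, since its representative in the Poincar\'e section consists of $q$ points permuted cyclically under the $p/q$ rotation combinatorics, and the orbit is contained in a thin tubular neighborhood of the core.

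Third, to obtain $M$ hyperbolic orbits which are mutually isotopic---and hence must share the same cable type $(p,q)$---I would exploit the flexibility built into the proof of Theorem~\ref{T.Acta}. The idea is to deform the field in an interior neighborhood of $\pd\Phi(\cT_k)$, \emph{within the class of Beltrami fields}, so that the frequency ratio $\om_1(\rho)/\om_2(\rho)$ is non-monotonic in $\rho$ and attains the same rational value $p/q$ on at least $M$ disjoint concentric annuli. Poincar\'e--Birkhoff applied in each such annulus then produces a hyperbolic orbit of rotation number $p/q$, so that the $M$ resulting orbits all lie in a tubular neighborhood of the core, carry the same $(p,q)$ winding around it, and can be deformed to one another by an ambient isotopy of $\Phi(\cT_k)$.

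The main technical obstacle will be to reconcile the three competing requirements: the prescribed topology of the invariant tori, a large prescribed number of mutually isotopic hyperbolic orbits, and the almost-full measure of the KAM set. The first and third are by now standard via Theorem~\ref{T.Acta} and classical KAM estimates, respectively, but the second forces a delicate control of the Poincar\'e map. Ensuring that the Beltrami equation $\curl u=\la u$ is preserved by the perturbations used to shape the twist profile near $\pd\Phi(\cT_k)$ is the core analytic difficulty, and presumably requires threading this additional freedom directly through the Cauchy-type global construction underlying Theorem~\ref{T.Acta}, rather than perturbing a posteriori.
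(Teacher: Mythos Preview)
Your proposal has a genuine gap at the point where you invoke the Poincar\'e--Birkhoff theorem. That theorem guarantees at least two periodic orbits of a given rational rotation number in a twist annulus, but it does \emph{not} assert that any of them is hyperbolic; the orbits could all be degenerate. The folklore statement ``one hyperbolic, one elliptic'' requires an additional nondegeneracy hypothesis (e.g.\ that the generating function is Morse along the resonant circle), and this is precisely what cannot be checked by soft arguments here: as the paper stresses in the introduction, the family of Beltrami fields has infinite codimension in the space of divergence-free fields, so genericity arguments are unavailable. You correctly identify in your final paragraph that staying inside the Beltrami class while shaping the twist is the core difficulty, but your outline does not supply a mechanism for doing so, and without one the hyperbolicity claim is unsupported.

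The paper's route is quite different and is engineered specifically to produce \emph{verifiable} hyperbolicity. Rather than working near the boundary torus of Theorem~\ref{T.Acta} and appealing to Poincar\'e--Birkhoff, the authors build the local Beltrami field on a thin tube of width~$\ep$ from scratch, prescribing its normal component on~$\pd\cT_\ep$. A first perturbation of order~$\ep^3$ in the boundary datum is chosen (Subsections~\ref{S.normal}--\ref{S:KAM:case2}) so that, after imposing a resonance condition $\int_0^\ell\tau=-2\pi p/q$ on the core curve, the frequency function $\hat\omega(I)$ of the iterated Poincar\'e map hits the fixed rational $2\pi p/q$ at $M$ prescribed action values~$I_k$ with nonzero derivative. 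A second perturbation of order~$\ep^{5/2}$ is then added, and a subharmonic Melnikov computation (Theorem~\ref{T:subh:Melnikov}) shows that each resonant circle breaks into a nondegenerate chain of saddles and elliptic points, with the eigenvalues computed explicitly. The paper also explains (Appendix) why the na\"ive Melnikov approach, closer in spirit to yours, fails: the unperturbed twist is only $\cO(\ep^2)$, so for the harmonic field the resonant denominators $q$ blow up as $\ep\to 0$ and the Melnikov integral degenerates. The two-scale boundary perturbation is introduced precisely to fix $p/q$ independently of~$\ep$ and to render the Melnikov function nondegenerate.
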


It is worth stressing that this result is subtler than it
looks. Indeed, at first sight one can be tempted to believe that it
should be quite easy to pass from the many-tori configuration
presented in Theorem~\ref{T.Acta} to the field of
our main theorem using generic perturbations. However, a moment's thought reveals that one
cannot hope to use generic perturbations in our case. The reason is that we are restricting
our attention to the family of Beltrami fields, which is highly
non-generic in itself. This is because a Beltrami field satisfies the
PDE~\eqref{Beltrami}, which in particular means that the family of
Beltrami fields is of infinite codimension in the space of
divergence-free fields. An illustrative example of the (many)
obstructions that this imposes is that a Beltrami field cannot admit a
local first integral with a regular level set diffeomorphic to the
sphere~\cite{ARMA}.

In addition to the fact that the family of Beltrami fields is strongly
non-generic, as we have just discussed, there is another very
important factor that accounts for the difficulty of the problem: to
analyze invariant tori of complicated topologies, in the spirit of
Arnold, one cannot simply use Beltrami fields given by simple explicit
expressions, such as the
{ABC~flow}, 
but one needs to consider general
solutions to a PDE that cannot be solved in closed form. This is a key
difficulty that does not appear in the study of, say, celestial
mechanics or geodesic flows, since in these cases the dynamical
systems are explicitly determined by the masses of the bodies or the
Riemannian metric. To overcome this problem one must combine
dynamical systems techniques with fine estimates for PDEs. Notice
that, if one were interested in simple examples of Beltrami fields
such as 
{the ABC~flow}
on the 3-torus, it is much easier to show that
there can be hyperbolic behavior~\cite{ABC}, but there is no hope of
proving a result like Theorem~\ref{T.main} using only explicit
solutions.

Let us now discuss in some detail the strategy of the proof of the
main theorem. The starting point is the Beltrami
field~$u$ with knotted toroidal invariant domains $\Phi(\cT_k)$ whose existence is granted by
Theorem~\ref{T.Acta}. Indeed, these domains can be taken arbitrarily thin
and their thickness, which we will denote by~$\ep$, plays the role of
a perturbation parameter. One might think that, as there is some
flexibility in the election of parameters leading to this Beltrami
field, it should be possible to exploit it to show the existence of hyperbolic periodic orbits via a subharmonic Melnikov method.
However, this boils down to a
perturbation of order $\ep^3$ of an a priori stable integrable field
with twist of order $\ep^2$. This setting is ideal for the
application of a KAM argument to prove the existence of an almost full measure set of invariant tori
but it is of no use to show the existence of hyperbolic periodic orbits
because the Melnikov subharmonic integral turns out to be degenerate.

This is the key aspect that makes the proof of
Theorem~\ref{T.main} fundamentally different from that of
Theorem~\ref{T.Acta}. To go around it, we will need to come up with
two further carefully concocted perturbations, acting at different
scales, which we extract from the PDE for Beltrami fields and involve perturbing the invariant
tori~$\partial\Phi(\cT_k)$. The first perturbation, which is of order $\ep^2$, is chosen to create $M$ resonant approximate invariant tori with the same $\ep$-independent frequency vectors. The second
perturbation, which is of order~$\ep^{5/2}$, is designed to destroy
these resonant tori and create
hyperbolic periodic orbits in each of the domains
$\Phi(\cT_k)$. To this
end we need to develop a subharmonic Melnikov theorem for Beltrami
fields (Theorem~\ref{T:subh:Melnikov}). Recall that the use of subharmonic
Melnikov integrals to create hyperbolic periodic orbits has found
remarkable recent applications~\cite{GHS12,HKS,KS,RR}.

Let us conclude with a couple of remarks about the framing of this
result in the context of Arnold's conjecture on chaotic vortex
lines. The first observation is that, although hyperbolic vortex lines
of complicated topologies were constructed in~\cite{Annals}, they were
not confined in invariant tori, so this can be considered to be an
unrelated result. The second observation is that, although we have
constructed a wealth of 
{hyperbolic orbits}
inside the invariant
tori, it is very far from trivial to 
{guarantee the existence of transverse homoclinic intersections
that would give rise to the desired chaotic behavior, even by introducing
additional perturbations}. The difficulty is twofold. One the one hand, the problem
turns out to be a priori stable, which makes the expected splitting
exponentially small. {Combined with the intrinsic error in any
  characterization of the Beltrami fields, this fact prevents to apply the techniques available in the literature}.
On the other hand, since the vector field is the
solution to a PDE (and therefore not explicit), it is not clear how to
even compute the field up to exponentially small errors. Again because
of the PDE, the introduction of fast oscillations in the field via
boundary data (which may be a way around some of these problems, as
explored in~\cite{CMP}) presents very nontrivial analytic difficulties
on the PDE side of the problem.

The paper is organized as follows. In Section~\ref{S.Beltramis} we
derive estimates for Beltrami fields on thin tubes that, building on
our previous work~\cite{Acta}, permits to understand Beltrami fields
on thin tubes in terms of suitable boundary data and small
errors. After making a suitable choice of these boundary data, in
Section~\ref{S.trajectories} we analyze the Poincar\'e map of the
corresponding Beltrami field and prove a KAM~theorem for this class of
fields. Armed with these results, in Section~\ref{S.Proof} we prove
our main theorem for Beltrami fields on~$\RR^3$. In
Section~\ref{S.torus} we derive an analogous result
for Beltrami fields on the torus~$\TT^3$ as a consequence of the
result on~$\RR^3$ and of an inverse localization argument. The paper
concludes with an Appendix, both of independent interest and
instrumental for the results of Sections~\ref{S.Beltramis}
and~\ref{S.trajectories}, where we study in detail the dynamics of the
harmonic field on a thin torus.

\section{Beltrami fields on thin toroidal domains}
\label{S.Beltramis}

This section is divided in three parts. In Subsection~\ref{S.Belt1} we
start by introducing a coordinate
system that is well suited to the task of describing functions
and vector fields defined on a thin tube. Next, in Subsection~\ref{sec:harmo}, we provide explicit asymptotic formulas for the (unique) harmonic
field~$h$ in a thin tube of thickness~$\ep$. Finally, in Subsection~\ref{ssec:lBF} we give estimates for Beltrami fields on a thin tube with prescribed harmonic projection and normal component on the boundary of the tube.

\subsection{Coordinates on thin tubes}\label{S.Belt1}
Following~\cite{Acta}, we characterize a
thin tube in terms of the curve that sits on its core and its
thickness $\ep$, which is a parameter that will be everywhere assumed
to be suitably small.

Let us start with a closed smooth curve parametrized by arc-length $\ga:\es\to \RR^3$, with $\es:=\RR/\ell\ms
\ZZ$ (throughout the paper, when the period is $2\pi$ we will simply
write $\SS^1\equiv \SS^1_{2\pi}$). This amounts to saying that the tangent field~$\dot\ga$ has unit norm and $\ell$
is the length of the curve. We will abuse the
notation and denote also by $\ga$ the curve in space defined by the above
map (i.e., the image set $\ga(\es)\subset\RR^3$).

Let us denote
by $\cT_\ep\equiv \cT_\ep(\ga)$ a metric neighborhood with
thickness~$\ep$ of the curve~$\ga$, that is,
\[
\cT_\ep:=\big\{x\in\RR^3:\dist(x,\ga)<\ep\big\}\,.
\]
This is a thin tube having the curve $\ga$ as its core. It is
standard that, for small $\ep$, $\cT_\ep$ is a domain with smooth
boundary.

Since the curvature of a generic curve does not vanish (see
\cite[p.~184]{Bruce}, where ``generic''
refers to an open and dense set, with respect to a reasonable $C^k$
topology, in the space of smooth curves in $\RR^3$), by taking a small
deformation of the curve $\ga$ if necessary one can assume that the
curvature of $\ga$ is strictly positive. This enables us to define the
normal and binormal fields to the curve at each point $\ga(\al)$,
which we will respectively denote by $e_1(\al)$ and $e_2(\al)$.

Using the vector fields $e_j(\al)$ and denoting by $\DD$ the two-dimensional
unit disk, we can introduce smooth coordinates
$(\al,y)\in\es\times\DD$ in the tube $\cT_\ep$ via the diffeomorphism
\[
(\al,y)\mapsto \ga(\al)+\ep\ms y_1\ms e_1(\al)+\ep \ms y_2\ms e_2(\al)\,.
\]
In the coordinates $(\al,y)$, a short computation using the Frenet
formulas shows that the Euclidean metric in the tube reads as
\begin{equation}\label{ds}
ds^2=A\,d\al^2-2\ep^2\tau(y_2\,dy_1-y_1\,dy_2)\,d\al+\ep^2\big(dy_1^2+dy_2^2\big)\,,
\end{equation}
where $\ka\equiv\ka(\al)$ and $\tau\equiv\tau(\al)$ respectively
denote the curvature and torsion of the curve,
\begin{equation}\label{A1}
A:=(1-\ep\ka y_1)^2+(\ep\tau)^2|y|^2\,,
\end{equation}
and $|y|$ stands for the Euclidean norm of $y=(y_1,y_2)$.

We will sometimes take polar coordinates $r\in(0,1)$, $\te\in\SS^1:=\RR/2\pi\ms\ZZ$  in the disk~$\DD$, which are
defined so that
\[
y_1=r\cos\te\,,\quad y_2=r\sin\te\,.
\]
The metric then reads as
\begin{equation}\label{dspolar}
ds^2=A\,d\al^2+2\ep^2\tau
r^2d\te\,d\al+\ep^2dr^2+\ep^2r^2d\te^2\,,
\end{equation}
where we, with a slight abuse of notation, still call $A$ the
expression of~\eqref{A1} in these coordinates, i.e.,
\begin{equation}\label{A2}
A:=(1-\ep\ka r\cos\te)^2+(\ep\tau r)^2\,.
\end{equation}

For future reference, let us record here that the gradient and the
Laplacian of a scalar function on the tube read in these
coordinates as
\begin{align}
\nabla \psi &=
\frac{1}{B^2} \left(\frac{\pd \psi}{\pd \al} - \tau \frac{\pd \psi}{\pd \theta} \right)\pd_\al
+
\frac{1}{\ep^2} \frac{\pd \psi}{\pd r} \pd_r
+
\frac{1}{(\ep r B)^2} \left(A \frac{\pd \psi}{\pd \theta} - \ep^2 r^2 \tau \frac{\pd \psi}{\pd \al} \right) \pd_\theta \,.
\nonumber
\label{eq:nabla}
\\
\De\psi  &= \frac1{\ep^2}\Big( \frac{\partial^2 \psi}{\partial r^2}+\frac1r
\frac{\partial \psi}{\partial r}+\frac
A{r^2B^2} \frac{\partial^2 \psi}{\partial \te^2}\Big)+\frac1{B^2}
\frac{\partial^2
\psi}{\partial \al^2} -
\frac{2\tau}{B^2} \frac{\partial^2 \psi}{ \partial \al \partial \te}\\
&\qquad- \frac{\tau'-\ep
  r(\ka\tau'-\ka'\tau)\cos\te}{B^3} \frac{\partial \psi}{\partial \te} \notag
+
\frac{\ka\sin\te(B^2-(\ep\tau r)^2)}{\ep r B^3} \frac{\partial \psi}{\partial
\te}-\frac{\ka \cos\te}{\ep B} \frac{\partial \psi}{\partial r}\\
&\qquad \qquad\qquad \qquad\qquad \qquad\qquad\qquad\qquad\qquad\quad +\frac{\ep r(\ka'\cos\te+\tau\ka\sin\te)}{B^3} \frac{\partial \psi}{\partial
\al} \,,
\label{eq:laplacian}
\end{align}
with $B:=1-\ep\ka r\cos\te$, and that the Euclidean volume measure is
written as $dx= \ep^2 B\, d\al\, dy$. Here and in what follows we denote
derivatives with respect to the variable~$\al$ by primes. We will set
\begin{equation}
\label{eq:laplacian:y}
\De_y \psi = \frac{\partial^2 \psi}{\partial r^2}+\frac1r
\frac{\partial \psi}{\partial r}+\frac
1{r^2} \frac{\partial^2 \psi}{\partial \te^2}\,.
\end{equation}

Finally, we shall denote by $\cO(\ep^j)$ any quantity $q(\al,y)$
defined on $\es\times\DD$ such that
\[
\|q\|_{C^k(\es\times\DD)}\leq C\ep^j\,,
\]
where the constant $C$ depends on~$k$ but not on~$\ep$.

\begin{remark}
In this paper we follow the most popular sign convention for the torsion of a curve, which is the opposite as the one considered in~\cite{Acta}. This explains the difference between some formulas obtained here and those obtained in~\cite{Acta} (which become the same after the transformation $\tau \mapsto -\tau$).
\end{remark}

\subsection{Asymptotic formulas for harmonic fields on thin tubes}\label{sec:harmo}

The (tangent) harmonic fields on the
tube $\cT_\ep$, are defined as the vector fields $h \in
C^\infty(\cT_\ep,\RR^3)$ such that
\[
\curl h=0\,,\qquad \Div h=0\,, \qquad h\cdot \nu=0\,,
\]
where $\nu$ is a unit normal. By Hodge theory, harmonic fields on a toroidal domain define a one-dimensional linear
space, so in what follows we shall fix, once and for all, a nonzero
harmonic field that we will still call $h$. Our goal in this subsection is to provide explicit formulas for the harmonic
field~$h$ and its derivatives up to terms that are suitably small for
small~$\ep$.

Since the vector field
\[
h_0 := B^{-2} (\pd_\al - \tau \pd_\theta)
\]
can be readily shown to be irrotational and tangent to the boundary,
by the Hodge decomposition it follows that the harmonic field can be
written as
\begin{equation}\label{harm}
h=:h_0+\nabla\vp\,,
\end{equation}
where
\begin{equation}\label{eq:New:BVP:harmonic}
  \Delta \vp = \rho_0 \quad \mbox{in }  \cT_\ep, \qquad \frac{\pd \vp}{\pd \nu} =0
  \,,\qquad \int_{\cT_\ep}\vp\, dx=0\,,
\end{equation}
and
\[
\rho_0 :=-\Div h_0= -\ep B^{-3} r (\tau \kappa \sin\theta + \kappa' \cos \theta) \,.
\]

The function $\vp$ that determines the harmonic field $h$ can be computed perturbatively as an expansion in $\ep$. To this end, we recall the estimates obtained in~\cite[Theorem 4.9]{Acta} for the Neumann boundary value problem
\begin{align}\label{Neumann1}
\Delta \psi = \rho \quad \text{in } \cT_\ep\,,
\qquad \frac{\pd \psi}{\pd\nu} =0\,,\qquad \int_{\cT_\ep}\psi\, dx=0\,.
\end{align}
To state quantitative estimates for a scalar function
$\psi$ on $\cT_\ep$, we will use the coordinates $(\al,y)$ to define the $k\th$ Sobolev norm as
\[
\|\psi\|_{H^k}^2:=\sum_{i_1+i_2+i_3\leq k} \int_{\es\times\DD}
\bigg|\frac{\pd^{i_1+i_2+i_3} \psi}{\pd^{i_1}y_1\, \pd^{i_2}y_2\, \pd^{i_3}\al}\bigg|^2\, d\al\, dy\,.
\]

\begin{theorem}[\cite{Acta}]\label{T.estimates1}
For small enough~$\ep$, the boundary value problem~\eqref{Neumann1} has a
unique solution $\psi$, provided that $\rho$ satisfies the necessary
condition
\[
\int_{\cT_\ep}\rho\, dx=0\,.
\]
For any integer $k$, the solution is
  bounded as
\begin{align*}
\|\psi\|_{H^k}&\leq C\,\|\rho\|_{H^k}\,,\\
\|D_y\psi\|_{H^k}&\leq C\,\ep^2\|\rho\|_{H^{k+1}}\,.
\end{align*}
The constants depend on~$k$ but not on $\ep$.
\end{theorem}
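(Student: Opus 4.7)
The plan is to transplant the problem to the fixed domain $\es\times\DD$ via the coordinates $(\al,y)$, where the Neumann condition $\pd\psi/\pd\nu=0$ becomes simply $\pd_r\psi=0$ at $r=1$ (the Euclidean unit normal to $\pd\cT_\ep$ is $\ep^{-1}\pd_r$), and the Laplacian displayed in~\eqref{eq:laplacian} acquires the scaling structure
\begin{equation*}
\ep^2\De\psi=\De_y\psi+\cO(\ep)\,D_y\psi+\ep^2\,L_\al\psi,
\end{equation*}
with $L_\al$ gathering the $\al$-derivative terms ($\pd_\al^2\psi$, $\pd_{\al\te}^2\psi$) together with bounded zeroth and first order coefficients. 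This clean separation between the ``transverse'' $y$-direction and the ``longitudinal'' $\al$-direction is what drives both bounds. Existence and uniqueness then follow by a standard Lax--Milgram argument on the closed subspace of $H^1(\cT_\ep)$ consisting of mean-zero functions: coercivity comes from the Poincar\'e inequality with a constant controlled by the longitudinal length $\ell$ (not the thickness $\ep$), and the compatibility condition $\int_{\cT_\ep}\rho\,dx=0$ is precisely what is needed for the functional $\phi\mapsto-\int\rho\phi\,dx$ to descend to the mean-zero subspace.

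For the basic bound $\|\psi\|_{H^k}\leq C\|\rho\|_{H^k}$ I would proceed by induction on $k$, starting from the $H^1$ energy identity $\int|\nabla\psi|^2\,dx=-\int\rho\psi\,dx$; tracking the volume element $dx=\ep^2 B\,d\al\,dy$ and the anisotropic expression \eqref{eq:nabla} for $|\nabla\psi|^2$ converts this into the desired bound for the Sobolev norm on $\es\times\DD$. For the inductive step I would commute the equation with tangential derivatives $\pd_\al^j$ and with $D_y^j$: after multiplication by $\ep^2$, the resulting commutators with $\De$ become zeroth order perturbations with $\cO(1)$ coefficients, so fibrewise elliptic regularity on $\DD$ for the transverse derivatives combined with the energy estimate in $\al$ closes the induction uniformly in $\ep$.

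For the sharper $D_y$ estimate, the key observation is that, at leading order, $\psi$ is independent of $y$. I would split $\psi=\bar\psi(\al)+\tilde\psi(\al,y)$ with $\bar\psi$ the fibre average over $\DD$, so that $D_y\psi=D_y\tilde\psi$ and $\tilde\psi$ has zero fibre mean for each $\al$. Since $\De_y\bar\psi=0$, the rescaled equation reads
\begin{equation*}
\De_y\tilde\psi=\ep^2\rho-\cO(\ep)\,D_y\tilde\psi-\ep^2\,L_\al\psi,\qquad \pd_r\tilde\psi\big|_{r=1}=0.
\end{equation*}
The Neumann Laplacian on $\DD$ restricted to mean-zero functions is invertible, giving the fibrewise bound $\|\tilde\psi(\al,\cdot)\|_{H^2(\DD)}\leq C\|\De_y\tilde\psi(\al,\cdot)\|_{L^2(\DD)}$. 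Integrating in $\al$, absorbing the $\cO(\ep)\,D_y\tilde\psi$ term into the left-hand side (valid for $\ep$ small), and using $\|L_\al\psi\|_{L^2}\leq C\|\psi\|_{H^2}\leq C\|\rho\|_{H^1}$ from the basic estimate applied with one extra derivative, one gets $\|D_y\psi\|_{L^2}\leq C\ep^2\|\rho\|_{H^1}$. The loss of one derivative on the right is exactly because $L_\al$ carries $\pd_\al^2$. The general case in $k$ follows by commuting with $\pd_\al$ and iterating.

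I expect the main obstacle to be the careful bookkeeping of $\ep$-factors in the inductive bootstrap: several individual coefficients in \eqref{eq:laplacian} are $\cO(1/\ep)$ and only become benign after multiplication by $\ep^2$ and when paired against $D_y\psi$-type quantities, so one must be attentive that the commutators $[\pd_\al,\De]$ and $[D_y,\De]$ do not secretly produce $1/\ep$ factors that would break the induction, and that the fibrewise Neumann elliptic regularity on $\DD$ is applied to exactly the mean-zero component of $\psi$.
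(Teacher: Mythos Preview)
The paper does not supply a proof of this theorem; it is imported from~\cite{Acta} (Theorem~4.9 there) and used as a black box, so there is no in-paper argument to compare against.

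Your sketch is nonetheless a sound outline of the standard approach to thin-domain Neumann estimates and is very likely close to what~\cite{Acta} does: pull back to the fixed cylinder $\es\times\DD$, recognise $\ep^2\De$ as a bounded perturbation of the anisotropic operator $\De_y+\ep^2\pd_\al^2$, obtain the basic $H^k$ bound by energy estimates plus commutation with tangential derivatives, and obtain the refined $D_y$ bound by splitting off the fibre average~$\bar\psi(\al)$ and inverting $\De_y$ on fibrewise mean-zero functions. Your flagged obstacle---tracking $\ep$-weights through commutators arising from the $\cO(1/\ep)$ first-order coefficients in~\eqref{eq:laplacian}---is indeed the main bookkeeping burden. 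One consistency check worth making explicit in your write-up: in the fibrewise step the Neumann problem on~$\DD$ requires the right-hand side to have zero mean over each fibre, and this compatibility is precisely what the fibre-averaged equation (a second-order ODE in~$\al$ for~$\bar\psi$, solvable thanks to the global mean-zero hypothesis on~$\rho$ and the periodicity in~$\al$) enforces; the decomposition $\psi=\bar\psi+\tilde\psi$ is therefore self-consistent rather than an ad hoc ansatz.
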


Perturbative computations and the estimates in this theorem allow us
to obtain the explicit expansion of the harmonic field $h$.

\begin{proposition}\label{prop:HF}
The components of the harmonic field, defined as
\[
h =: h_\al \pd_\al + h_r \pd_r + h_\te \pd_\te\,,
\]
can be written as
\begin{equation}\label{eq:HF:unscaled}
\begin{split}
h_\al &= 1 + \ep h_\al^{(1)}+
\ep^2 h_\al^{(2)} + \cO(\ep^3) \, ,\\
h_r&= \ep h_r^{(1)}+
\ep^2 h_r^{(2)} + \cO(\ep^3) \, , \\
h_\te&= - \tau + \ep h_\theta^{(1)} + \ep^2 h_\theta^{(2)} + \cO(\ep^3)\, , \\
\end{split}
\end{equation}
where
\begin{align*}
h_\al^{(1)} &:= 2 \kappa r \cos \theta\, , \\
h_r^{(1)} &:= -\frac{3(r^2 - 1) }{8} (\tau \kappa \sin \theta + \kappa' \cos \theta) \, , \\
h_\theta^{(1)} &:= -2 \tau \kappa r \cos \theta + \frac{r^2-3}{8r} (-\tau \kappa \cos \theta + \kappa' \sin \theta) \, , \\
h_\al^{(2)} &:= 3 \kappa^2 r^2 \cos^2 \theta+H_2(\al,r)\, , \\
h_r^{(2)} &:= -\frac{13 (r^3- r) }{24} (\tau \kappa^2 \sin {2\theta}+ \kappa \kappa' \cos {2\theta}) +  H_1(\al,r) , \\
h_\theta^{(2)} &:= -3 \tau \kappa^2 r^2 \cos^2 \theta + \frac{13(r^2 -2)}{48}  (-\tau \kappa^2 \cos 2\theta+\kappa \kappa' \sin 2\theta) - \tau H_2(\al,r)\, ,
\end{align*}
where the smooth functions $H_1(\al,r)$ and $H_2(\al,r)$ are
$\ell$-periodic in $\al$ with zero mean (that is, $\int_0^\ell
H_j(\al,r)\, d\al=0$) and independent of $\ep$.
\end{proposition}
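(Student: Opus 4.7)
The plan is to exploit the decomposition $h = h_0 + \nabla\vp$ introduced above, where $\vp$ solves the Neumann Poisson problem~\eqref{eq:New:BVP:harmonic}. The $h_0$ contribution is fully explicit: Taylor-expanding $B^{-2} = 1 + 2\ep\ka r\cos\theta + 3\ep^2\ka^2 r^2\cos^2\theta + \cO(\ep^3)$ and using $h_0^\al = B^{-2}$ and $h_0^\theta = -\tau B^{-2}$ directly accounts for the terms $2\ka r\cos\theta$ and $3\ka^2 r^2\cos^2\theta$ in $h_\al^{(1)}$, $h_\al^{(2)}$, and for their $-\tau$-multiples inside the $h_\theta^{(j)}$. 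The radial component $h_r$, together with the residual pieces of $h_\al$ and $h_\theta$, must therefore come from $\nabla\vp$, and the whole problem reduces to computing $\vp$ up to an $H^k$-error of order $\ep^5$.

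Rewriting~\eqref{eq:laplacian} as the operator expansion $\De = \ep^{-2}\De_y + \ep^{-1}\De_{-1} + \De_0 + \cO(\ep)$ with $\De_{-1} = -\ka\cos\theta\,\pd_r + (\ka\sin\theta/r)\pd_\theta$ and $\De_0 = (\pd_\al - \tau\pd_\theta)^2 + \ka^2\sin\theta\cos\theta\,\pd_\theta - \ka^2 r\cos^2\theta\,\pd_r$, I substitute the ansatz $\vp = \ep^2 c_2(\al) + \ep^3 U_3 + \ep^4 U_4 + \cO(\ep^5)$ into $\De\vp = \rho_0$ and match orders. At order $\ep$, the problem $\De_y U_3 = -r(\tau\ka\sin\theta + \ka'\cos\theta)$ on $\DD$ with Neumann BC admits the explicit trigonometric solution $U_3 = \frac{3r-r^3}{8}(\tau\ka\sin\theta + \ka'\cos\theta)$, and inserting this into the gradient formula reproduces $h_r^{(1)}, h_\theta^{(1)}$. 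The delicate step is the matching at order $\ep^2$: the 2D equation for $U_4$ reads $\De_y U_4 = \rho_0^{(2)} - \De_{-1}U_3 - c_2''(\al)$, and a short computation shows that the $\theta$-average of $\rho_0^{(2)} - \De_{-1}U_3$ integrates over $\DD$ to $-\pi\ka\ka'(\al)/2$, which is generically nonzero. Fredholm compatibility at each $\al$ therefore forces the ODE $c_2''(\al) = -\ka\ka'(\al)/2$; this has a unique $\ell$-periodic solution (modulo an additive constant fixed by $\int_{\cT_\ep}\vp\,dx = 0$) precisely because $\int_0^\ell \ka\ka'\,d\al = \tfrac12[\ka^2]_0^\ell = 0$, and in particular $c_2'(\al)$ has zero $\al$-mean.

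With $c_2$ determined, the remaining equation for $U_4 = U_4^{(0)}(\al,r) + U_4^{(2)}(\al,r,\theta)$ splits into Fourier modes in $\theta$: the $\cos 2\theta$ and $\sin 2\theta$ parts are solved by a polynomial ansatz proportional to $r^2(2-r^2)$, yielding the $13(r^3-r)/24$ and $13(r^2-2)/48$ factors multiplying $\tau\ka^2, \ka\ka'$ in $h_r^{(2)}, h_\theta^{(2)}$; the mode-0 part $U_4^{(0)} \propto \ka\ka'$ defines $H_1(\al, r) := \pd_r U_4^{(0)}(\al, r)$, whose zero $\al$-mean again comes from $\int_0^\ell\ka\ka'\,d\al = 0$. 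The function $H_2(\al, r) := c_2'(\al)$ then appears in $h_\al^{(2)}$ and as $-\tau H_2$ in $h_\theta^{(2)}$ because $\nabla$ acting on an $\al$-only function is proportional to $\pd_\al - \tau\pd_\theta$ at leading order in the gradient formula. Finally, setting $\psi := \vp - (\ep^2 c_2 + \ep^3 U_3 + \ep^4 U_4)$, the residual satisfies a Neumann Poisson problem whose source has $H^k$-norm of order $\ep^3$ by construction, so Theorem~\ref{T.estimates1} yields $\|\psi\|_{H^k} = \cO(\ep^3)$ and $\|D_y\psi\|_{H^k} = \cO(\ep^5)$; Sobolev embedding on $\es\times\DD$ converts these to $C^k$-bounds, and the gradient formula then gives that $\nabla\psi$ contributes $\cO(\ep^3)$ to each component of $h$. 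The main obstacle is the detection of the non-obvious $\ep^2 c_2(\al)$ correction in $\vp$: without it the perturbative scheme has no solution at order $\ep^2$, and the compatibility obstruction must be identified before any of the $U_j$'s can be constructed.
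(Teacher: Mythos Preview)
Your argument is correct and follows the same overall strategy as the paper: write $h=h_0+\nabla\vp$, solve the Neumann problem for~$\vp$ perturbatively in~$\ep$, and read off the components from the gradient formula. The organization differs slightly. The paper imports from~\cite{Acta} a four-term splitting $\vp=\vp_0+\vp_1+\vp_2+\vp_3$, where $\vp_0,\vp_1$ are explicit, $\vp_2$ is defined as the solution of the anisotropic problem $\pd_\al^2\vp_2+\ep^{-2}\De_y\vp_2=\tfrac{3-14r^2}{8}\ep^2\ka\ka'$ with Neumann data, and $\vp_3$ is a remainder estimated via Theorem~\ref{T.estimates1}; $H_1,H_2$ are then defined as the $\ep\to 0$ limits of $\ep^{-4}\pd_r\vp_2$ and $\ep^{-2}\pd_\al\vp_2$. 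You instead carry out a direct formal expansion $\vp=\ep^2 c_2+\ep^3 U_3+\ep^4 U_4+\cdots$, detect the Fredholm obstruction at order~$\ep^2$, and solve the resulting ODE for~$c_2$. Your $\ep^3 U_3$ and the mode-$2$ part of $\ep^4 U_4$ correspond to the paper's $\vp_0$ and $\vp_1$, while your $c_2$ and the mode-$0$ part of $U_4$ are the first two terms of the paper's $\vp_2$ in its own $\ep$-expansion. Your route is a bit more self-contained and, as a bonus, it shows that $H_2(\al,r)=c_2'(\al)$ is actually independent of~$r$ (and satisfies $c_2''=-\tfrac12\ka\ka'$), something the paper does not record---indeed the paper remarks that a simple expression for $H_2$ eluded the authors.
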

\begin{proof}
Following~\cite[Theorem
5.1]{Acta}, the function $\vp$ can be written as
\[
\vp = \vp_0 + \vp_1 + \vp_2 + \vp_3
\]
where
\begin{align*}
\varphi_0 &:=  -\frac{r^3 - 3 r}{8}\ep^3 (\tau \kappa \sin \theta + \kappa' \cos \theta) \, , \\
\varphi_1 &:=  -\frac{13(r^4-2 r^2)}{96}\ep^4  (\tau \kappa^2 \sin {2\theta}+ \kappa \kappa' \cos {2\theta}) \,,
\end{align*}
and the functions $\vp_2$ and~$\vp_3$ are respectively defined as the unique solutions of
{the boundary value problem}
\[
\frac{\pd^2 \vp_2}{\pd\al^2} + \frac{\De_y\vp_2}{\ep^2} = \frac{3-14 r^2}{8} \ep^2  \kappa \kappa' \mbox{ in } \es\times\DD,
\quad \frac{\pd \vp_2}{\pd r}\Big|_{r=1} =0
\,,
\quad
\int_{\es\times\DD} \vp_2 \, d \al \,d y=0\,,
\]
where $\De_y\vp_2:= \frac{\pd^2 \vp_2}{\pd r^2} + \frac1r\frac{\pd
  \vp_2}{\pd r} + \frac{\pd^2 \vp_2}{\pd \te^2}$ is the standard
Laplacian in the $y$-coordinates (written here in polar coordinates), and
\[
\Delta \vp_3 = \cO(\ep^3) \mbox{ in } \es\times\DD,
\quad \frac{\pd \vp_3}{\pd r}\Big|_{r=1} =0
\,,
\quad
\int_{\es\times\DD} \vp_3 \, d \al\, d y=0\,.
\]
Using the above expressions together with the estimates in Theorem~\ref{T.estimates1}
with $\rho=\rho_0$, we
obtain
\begingroup
\allowdisplaybreaks
\begin{align*}
\frac{\pd \vp}{\pd\al} = {} &
 \underbrace{\frac{\pd \vp_0}{\pd\al}}_{\cO(\ep^3)}
 +
 \underbrace{\frac{\pd \vp_1}{\pd\al}}_{\cO(\ep^4)}
+
\underbrace{\frac{\pd \vp_2}{\pd\al}}_{\cO(\ep^2)}
+
\underbrace{\frac{\pd \vp_3}{\pd\al}}_{\cO(\ep^3)}\,,\\
\frac{\pd \vp}{\pd r} = {} &
\underbrace{\frac{\pd \vp_0}{\pd r} }_{\cO(\ep^3)}
+
\underbrace{\frac{\pd \vp_1}{\pd r} }_{\cO(\ep^4)}
+
\underbrace{\frac{\pd \vp_2}{\pd r} }_{\cO(\ep^4)}
+
\underbrace{\frac{\pd \vp_3}{\pd r} }_{\cO(\ep^5)}\,,\\
\frac{\pd \vp}{\pd \te} = {} &
\underbrace{\frac{\pd \vp_0}{\pd \te} }_{\cO(\ep^3)}
+
\underbrace{\frac{\pd \vp_1}{\pd \te} }_{\cO(\ep^4)}
+
\underbrace{\frac{\pd \vp_2}{\pd \te} }_{=0}
+
\underbrace{\frac{\pd \vp_3}{\pd \te} }_{\cO(\ep^5)}\,.
\end{align*}
\endgroup
By gathering the terms with the same dependence on~$\ep$, this chart
can be readily used to compute $h$
perturbatively. Indeed, using now the explicit formulas
$A:=(1-\ep\ka r\cos\te)^2+(\ep\tau r)^2$ and
$B:=1-\ep\ka r\cos\te$ together with the formula of the gradient in
the coordinates $(\al,r,\te)$, we can compute the components of
the harmonic field~$h$  (Equation~\eqref{harm})
as follows:
\begin{align*}
h_\al&=B^{-2}\bigg(1+\frac{\pd \vp}{\pd\al}-\tau  \frac{\pd
       \vp}{\pd\te}\bigg)  \\
 &=
1+ 2\ep \kappa r \cos \theta  + \ep^2 3 \kappa^2 r^2 \cos^2 \theta + \pd_\al \vp_2 + \cO(\ep^3)\,,
\\
h_r&=\ep^{-2}\frac{\pd \vp}{\pd r}\\
&=
\ep^{-2}\bigg(\frac{\pd \vp_0}{\pd r}+ \frac{\pd \vp_1}{\pd r}+ \frac{\pd \vp_2}{\pd r}\bigg)+ \cO(\ep^3)\,,
\\
h_\te&=B^{-2}\bigg(-\tau + \frac A{\ep^2 r^2}  \frac{\pd \vp}{\pd \te}-\tau
       \frac{\pd \vp}{\pd \al}\bigg) \\
&=
-\tau - 2 \ep \tau \kappa r \cos \theta + \frac{\frac{\pd \vp_0}{\pd \te}}{\ep^2 r^2}
 - 3 \ep^2 \tau \kappa^2 r^2 \cos^2 \theta - \tau \frac{\pd \vp_2}{\pd \al} + \frac{\frac{\pd \vp_1}{\pd \te}}{\ep^2 r^2} + \cO(\ep^3)
\end{align*}
The formula in the statement is obtained upon substituting the
formulas for $\vp_0$ and $\vp_1$ and setting
\[
H_1(\al,r):=\Big(\frac{1}{\ep^4} \frac{\pd \vp_2}{\pd
  r}\Big)\Big|_{\ep=0}\,,\qquad H_2(\al,r):=\Big(\frac{1}{\ep^2} \frac{\pd \vp_2}{\pd
  \al}\Big)\Big|_{\ep=0}\,.
\]
Here we are using that $\vp_2$ is independent of $\te$ because the
problem that defines $\vp_2$ is invariant under rotations of the coordinate~$\te$. It is obvious that $H_2$ has zero mean in $\alpha$. To see that the same holds for $H_1$, it is enough to integrate the elliptic PDE defining $\vp_2$ with respect to the $\alpha$-variable to check that
$$
\frac{\partial}{\partial r}\Big(r\int_0^\ell H_1(\alpha,r)\, d\al\Big)=0\,.
$$
Since the function $\vp_2$ is smooth, this readily implies that $H_1$ has zero mean as well, and the proposition follows.
\end{proof}

\begin{remark}
With some more work, one can show that
\[
H_1(\al,r)=-\frac7{16}r\,\ka(\al)\,\ka'(\al)\,,
\]
but we have not been able to derive a simple expression for
$H_2(\al,r)$. Anyhow, their explicit expressions will not be important
in further sections due to some unexpected cancellations that will
appear later on.
\end{remark}

\subsection{Estimates for Beltrami fields with prescribed normal component}
\label{ssec:lBF}

We will need to control Beltrami fields with prescribed projection
on the space of harmonic fields and prescribed normal component. That is, we need estimates (with the
sharp dependence of the thickness~$\ep$) for the
system of PDE
\begin{align}\label{curlv}
\curl v=\la v \quad \text{in } \cT_\ep\,,\qquad v\cdot
\nu=f\,,\qquad \cP_h v=1\,,
\end{align}
where $f$ can be regarded as a scalar function defined on the boundary
$\SS^1_\ell\times\pd \DD$ (which we identify with
$\SS^1_\ell\times\SS^1$) and the linear functional $\cP_h$ is defined
in terms of the harmonic field~$h$ introduced in the previous
subsection as
\[
\cP_h v:=\frac{\int_{\cT_\ep} v\cdot h\, dx}{\int_{\cT_\ep} |h|^2\, dx}\,.
\]
Since for technical reasons we will be interested in small values of~$\la$, we assume
throughout that the parameter~$\la$ is bounded as $|\la|\leq 1$ (in fact, we could have taken $|\la| \leq
c/\ep$, with $c$ an explicit constant, but we will not need this
refinement).

We shall next see that the analysis of how the boundary datum determines the
Beltrami field $v$ through the equation~\eqref{curlv} ultimately boils
down to the study of the auxiliary scalar boundary value problem~\eqref{Neumann1}.
The norm of a vector field on $\cT_\ep$ is defined componentwise, with the components of a
vector field $w$ on $\cT_\ep$ being
\[
(w_\al,w_y)\equiv (w_\al,w_{y_1},w_{y_2})\,,
\]
where the coordinate expression of $w$ is
\[
w= w_\al\,\pd_{\al}+w_{y_1}\,\pd_{y_1}+w_{y_2}\,\pd_{y_2}\,.
\]

The Beltrami field satisfying~\eqref{curlv} can be written as
\[
v=h+\nabla \psi + \cE\,,
\]
where the function $\psi$ is the only solution to the Neumann
problem:
\begin{align}\label{Neumann2}
\Delta \psi =0 \quad \text{in } \cT_\ep\,,
\qquad \frac{\pd \psi}{\pd\nu} =f\,,\qquad \int_{\cT_\ep}\psi\, dx=0\,.
\end{align}
Accordingly, the field $\cE$ is divergence-free, tangent to the boundary $\partial \cT_\ep$, has zero harmonic projection, and satisfies the equation
\[
(\curl-\la)\cE=\la(\nabla\psi+h)\,.
\]

The following result is a straightforward consequence of~\cite[Proposition 6.7]{Acta} and~\cite[Theorem 6.8]{Acta}
\begin{theorem}[\cite{Acta}]\label{T.estimates}
For small enough~$\ep$, the boundary value problem~\eqref{curlv} has a
unique solution, provided that $f$ satisfies the necessary
condition
\[
\int_{\pd\cT_\ep}f\, d\si=0\,.
\]
Assuming that the unique solution to the Neumann problem~\eqref{Neumann2} satisfies that $\nabla \psi=\cO(1)$ (understood componentwise), for any integer $k$ the components of the field~$\cE$ are bounded as
\[
\ep^{-1}\|\cE_\al\|_{H^k}+ \|\cE_y\|_{H^k}\leq C|\la|\,.
\]
The constant depends on~$k$ but not on $\ep$.
\end{theorem}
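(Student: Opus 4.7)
The strategy is to reduce the boundary value problem \eqref{curlv} to an invertibility statement for the operator $\curl-\la$ on an explicit subspace of divergence-free vector fields, using the Hodge-type decomposition $v=h+\nabla\psi+\cE$ already indicated in the excerpt. First, I would establish existence and uniqueness of the scalar potential $\psi$ solving the Neumann problem~\eqref{Neumann2}: the compatibility condition $\int_{\pd\cT_\ep} f\, d\si =0$ is exactly what is needed for this problem to admit a (unique, zero-mean) solution, and the quantitative control $\nabla\psi=\cO(1)$ is part of the standing hypotheses. Since $h\cdot\nu=0$ by construction of the harmonic field and $\cE\cdot\nu=0$ by requirement, the Neumann condition $\pd_\nu\psi=f$ ensures that $v\cdot\nu=f$; similarly, the normalization $\cP_h v=1$ translates into $\cP_h\cE=0$ because $\cP_h h=1$ tautologically and $\cP_h\nabla\psi=0$ by integration by parts (using that $h$ is tangent and divergence-free).

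Substituting the decomposition into $\curl v=\la v$ and exploiting $\curl h=0$ together with $\curl\nabla\psi=0$, one is reduced to the inhomogeneous equation
\[
(\curl-\la)\cE = \la(\nabla\psi+h)\,,
\]
posed in the space of divergence-free vector fields on $\cT_\ep$ that are tangent to $\pd\cT_\ep$ and have vanishing harmonic projection. By Proposition 6.7 of~\cite{Acta}, the curl operator restricted to this subspace is self-adjoint with discrete spectrum, and the smallest eigenvalue in absolute value is bounded below by a positive constant independent of~$\ep$ (with the appropriate weighted $L^2$ pairing). Since we are assuming $|\la|\leq 1$, the operator $\curl-\la$ is boundedly invertible on this subspace, which delivers the existence and uniqueness of~$\cE$ (and hence of~$v$).

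The sharp estimate is then a direct invocation of Theorem 6.8 of~\cite{Acta}, which provides anisotropic $H^k$ bounds for the inverse of $\curl-\la$ on thin tubes. By Proposition~\ref{prop:HF} the harmonic field is $\cO(1)$ and $\|\nabla\psi\|_{H^k}=\cO(1)$ by assumption, so the source $\la(\nabla\psi+h)$ is of size $\cO(|\la|)$; plugging into the quoted estimate yields exactly
\[
\ep^{-1}\|\cE_\al\|_{H^k}+\|\cE_y\|_{H^k}\leq C|\la|\,.
\]
The asymmetric weight $\ep^{-1}$ in front of the longitudinal component encodes the natural scaling of the tube coordinates: from the metric~\eqref{ds} one sees that $y$ corresponds to directions already contracted by $\ep$, whereas $\al$ runs along the long direction, so tangential components must be smaller by a factor of~$\ep$ relative to the $y$-components in order to be of comparable physical size inside $\cT_\ep$.

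The genuine technical obstacle, namely establishing invertibility and the sharp anisotropic estimates for $\curl-\la$ on thin tubes uniformly in~$\ep$, has already been carried out in~\cite{Acta}; the content of the proof here is thus essentially to identify $v=h+\nabla\psi+\cE$ as the correct decomposition, verify that the constraints on $v$ translate faithfully into constraints on~$\cE$, and repackage the estimates in the form demanded by the subsequent applications of the theorem in Sections~\ref{S.trajectories} and~\ref{S.Proof}.
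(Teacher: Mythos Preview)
Your proposal is correct and matches the paper's approach exactly: the paper does not supply an independent proof but simply records Theorem~\ref{T.estimates} as a straightforward consequence of \cite[Proposition~6.7]{Acta} and \cite[Theorem~6.8]{Acta}, which are precisely the two inputs you invoke (invertibility of $\curl-\la$ on the tangent, divergence-free, harmonic-orthogonal subspace, and the anisotropic $H^k$ estimates on thin tubes). Your unpacking of the decomposition $v=h+\nabla\psi+\cE$ and the translation of the constraints $v\cdot\nu=f$, $\cP_h v=1$ into $\cE\cdot\nu=0$, $\cP_h\cE=0$ is accurate and is implicit in the text immediately preceding the theorem.
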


In this paper we will be interested in taking $\lambda=\cO(\ep^3)$ and normal data that are linear combinations of functions of the form
\begin{equation}\label{eq:datum}
f(\al,\te)=\ep^s a(\al) e^{i n \te}\,,
\end{equation}
where $n$ is an integer with $|n|\neq 0,1$, $a$ is a smooth
function in $\SS^1_\ell$, and exponents $s\geq 3$. In particular, this function $f$ satisfies the necessary condition in Theorem~\ref{T.estimates}.
Using the adapted coordinates $(\al,r,\te)$, the problem~\eqref{Neumann2} reads as
\begin{equation}\label{eq:Neumann:new}
\Delta \psi = 0 \quad \text{in } \cT_\ep\,,
\qquad \left.\frac{\pd \psi}{\pd r}\right|_{r=1} =f \,,\qquad \int_{\cT_\ep}\psi\, dx=0\,.
\end{equation}

In order to solve this equation perturbatively in $\ep$, we introduce the auxiliary function
\[
\psi_0 = \frac{\ep^s}{n} a(\al) r^n e^{i n \te}
\]
that clearly satisfies $\Delta_y\psi_0=0$ in $\cT_\ep$ and $\frac{\pd \psi_0}{\pd r}|_{r=1} =f$. Additionally it has zero mean $\int_{\cT_\ep}\psi_0\, dx=0$. Hence,
taking $\tilde \psi := \psi-\psi_0$,
the problem~\eqref{eq:Neumann:new} is reduced to
\begin{equation}\label{eq:Neumann:new2}
\Delta \tilde \psi = - \Delta \psi_0
\quad \text{in } \cT_\ep\,,
\qquad \pd_r \tilde \psi|_{r=1} = 0 \,, \qquad \int_{\cT_\ep}\tilde\psi\, dx=0\,.
\end{equation}

Using the expressions~\eqref{eq:laplacian} and~\eqref{eq:laplacian:y}, we can compute the
dominant terms of the Laplacian of $\psi_0$:
\begin{align*}
\De \psi_0 = {} & \frac1{\ep^2} \De_y \psi_0 + \frac{\kappa \sin \te}{\ep r} \frac{\pd \psi_0}{\pd \te}
- \frac{\kappa \cos \te}{\ep} \frac{\pd \psi_0}{\pd r} + \cO(\ep^s)
\\
= {} &
- \ep^{s-1} a \ka r^{n-1} e^{i (n-1) \te} + \cO(\ep^s)\,.
\end{align*}

To obtain the dominant term of the solution of~\eqref{eq:Neumann:new2}, we define $\tilde{\tilde\psi}:=\tilde\psi-\tilde\psi_0$, where $\tilde \psi_0 := Ae^{i (n-1)\te}+C$,
with
\[
A = c_1 r^{n-1} + c_2 r^{n+1}\,,
\]
and $C$ is a constant that is fixed later. Choosing the functions
\[
c_2 := \frac{\ep^{s+1} a \ka}{4n}\,, \qquad c_1 := -c_2 \frac{n+1}{n-1}\,,
\]
we directly check that $\tilde\psi_0$ satisfies the equation
\[
\frac1{\ep^2} \De_y \tilde \psi_0
= \ep^{s-1} a \ka r^{n-1} e^{i (n-1) \te}\,,
\]
the boundary condition $\pd_r \tilde \psi_0|_{r=1} = 0$, and has zero mean $\int_{\cT_\ep}\tilde\psi_0\, dx=0$ for an appropriate choice of the constant $C=\cO(\ep^{s+2})$.

An easy computation shows that $\tilde{\tilde\psi}$ is the unique solution to a Neumann problem of the form
\begin{equation}
\Delta \tilde {\tilde \psi} = \cO(\ep^s)
\quad \text{in } \cT_\ep\,,
\qquad \pd_r \tilde {\tilde \psi}|_{r=1} = 0 \,, \qquad \int_{\cT_\ep}\tilde{\tilde\psi}\, dx=0\,.
\end{equation}
Theorem~\ref{T.estimates1} then implies that $\tilde{\tilde\psi}=\cO(\ep^s)$ and $D_y\tilde{\tilde\psi}=\cO(\ep^{s+2})$.

Putting together the above computations, we conclude that the solution $\psi = \psi_0+\tilde\psi_0+\tilde{\tilde\psi}$
to the Neumann problem~\eqref{eq:Neumann:new} satisfies
\begin{align*}
\frac{\pd \psi}{\pd r} = {} & \ep^s a r^{n-1} e^{i n\te}
+ \ep^{s+1} \frac{a \ka(n+1)}{4 n} \Big( r^n - r^{n-2} \Big) e^{i(n-1)\te} + \cO(\ep^{s+2}) \,, \\
\frac{\pd \psi}{\pd \te} = {} & \ep^s i a r^{n} e^{i n \te}
+ \ep^{s+1} \frac{i a \ka}{4 n} \Big( (n-1) r^{n+1} - (n+1) r^{n-1} \Big) e^{i(n-1)\te} + \cO(\ep^{s+2}) \,.
\end{align*}

Finally, using the expression~\eqref{eq:nabla} we obtain that the components of $\nabla \psi$ are
given by
\begin{align*}
(\nabla \psi)_\al = {} & \cO(\ep^s) \,,\\
(\nabla \psi)_r = {} & \ep^{s-2} a r^{n-1} e^{i n\te}
+ \ep^{s-1} \frac{a \ka (n+1)}{4 n} \Big(r^n - r^{n-2} \Big) e^{i(n-1)\te} + \cO(\ep^s) \,, \\
(\nabla \psi)_\te = {} &
 \ep^{s-2} i a r^{n-2} e^{i n \te}
+ \ep^{s-1} \frac{i a \ka}{4 n} \Big( (n-1) r^{n-1} - (n+1) r^{n-3} \Big) e^{i(n-1)\te} + \cO(\ep^s) \,.
\end{align*}
In particular, since $s\geq 3$, then $\nabla \psi=\cO(\ep)$, so the condition for the function $\psi$ in Theorem~\ref{T.estimates} is satisfied. This will be exploited in the next section to analyze the integral curves of the Beltrami field $v$.

\begin{remark}\label{eq:explicit:n1}
In the above computations we have avoided the case $n=1$ in order to compute
an explicit expression for the terms $\cO(\ep^{s-1})$ of the field $\nabla \psi$.
A weaker estimate in the case $n=1$ will also be useful later. By adapting the
argument to solve the Neumann problem~\eqref{eq:Neumann:new2} one can readily see that for $n=1$ the following
weaker estimate holds:
\begin{align*}
(\nabla \psi)_\al = {} & \cO(\ep^{s})\,, \\
(\nabla \psi)_r = {} & \ep^{s-2} a e^{i\te} + \cO(\ep^{s-1})\,,\\
(\nabla \psi)_\te = {} & \ep^{s-2} i a r^{-1} e^{i\te} + \cO(\ep^{s-1})\,.
\end{align*}
\end{remark}

\section{The Poincar\'e map of the Beltrami field: KAM and resonances}
\label{S.trajectories}

This section consists of four parts. In Subsection~\ref{S.normal} we
consider a Beltrami field arising from normal data that have a certain
structural form. Its Poincar\'e map is computed in
Subsection~\ref{S:Poinc} and analyzed in depth in
Subsection~\ref{S:KAM:case2}, where we make a concrete choice for the
boundary data. To conclude, in Subsection~\ref{S:KAM:case1} we prove a
KAM theorem for this class of Beltrami fields.

\subsection{Normal data of size $\cO(\ep^3)$}\label{S.normal}
In this subsection we will consider the Beltrami field
$v$ on the tube~$\cT_\ep$ constructed in Subsection~\ref{ssec:lBF}, c.f. Equation~\eqref{curlv}. More precisely, we will fix an eigenvalue $\lambda=\cO(\ep^3)$ and
take the following linear combination of functions for the normal component:
\[
f = \sum_{n \in \cN} f_n + \hat f\,,
\]
with
$\cN \subset \NN \cap [2,N]$ a finite set. The smooth (real-valued) functions $f_n$ are taken as
\[
f_n =
\ep^3(a_n e^{i n \te} + \overline{a_n} e^{-i n \te})\,,
\]
where the (complex-valued) functions $a_n\equiv a_n(\alpha)$ will be fixed later, and the function $\hat f$ is taken as
\[
\hat f = \cO(\ep^{2+\mu})\,.
\]
Here and in what follows $2<\mu<3$ is a fixed exponent. At this moment we do not fix the function $\hat f$ because all the analysis in this section is independent of it; we shall choose a convenient function in Section~\ref{S.Proof} in order to create hyperbolic periodic orbits via a subharmonic Melnikov method.

Hence, using the construction and estimates in Subsection~\ref{ssec:lBF}, we produce
a Beltrami field of the form
$$v=h + \nabla \psi + \nabla \hat\psi+ \cO(\ep^3)\,,$$
smooth up to the boundary of the tube. The function $\psi$ is the unique solution to the boundary problem~\eqref{eq:Neumann:new} with Neumann datum given by $\sum_{n \in \cN} f_n$, and $\hat\psi$ is the unique solution to the boundary problem~\eqref{eq:Neumann:new} with Neumann datum $\hat f$.

Using the expressions obtained at the end of Subsection~\ref{ssec:lBF} and the linearity of the Neumann problem, we can write the components of the vector field $\nabla\psi$:
\begingroup
\allowdisplaybreaks
\begin{align}
(\nabla \psi)_\alpha = {} & \cO(\ep^3) \,,
\nonumber \\
(\nabla\psi)_r   = {} & \ep \sum_{n \in \cN} a_n r^{n-1} e^{i n \te}
              + \ep \sum_{n \in \cN} \overline{a_n} r^{n-1} e^{-i n \te}
\nonumber \\
              & + \ep^2 \sum_{n \in \cN} \frac{a_n \ka(n+1)}{4 n}
              \left(r^n - r^{n-2} \right) e^{i (n-1) \te}
\nonumber \\
              & + \ep^2 \sum_{n \in \cN} \frac{\overline{a_n} \ka (n+1) }{4 n}
              \left(r^n - r^{n-2} \right) e^{-i (n-1) \te}
              + \cO(\ep^3)
              \,,
\nonumber \\
         =: {} & \ep u_r^{(1)} + \ep^2 u_r^{(2)} + \cO(\ep^3)
              \,,
\label{eq:ur12} \\
(\nabla\psi)_\te=
          {} & \ep \sum_{n \in \cN} i a_n r^{n-2} e^{i n \te}
               - \ep \sum_{n \in \cN} i \overline{a_n} r^{n-2} e^{-i n \te}
\nonumber \\
              & + \ep^2 \sum_{n \in \cN} \frac{i a_n \ka}{4 n}
              \left( (n-1) r^{n-1} - (n+1) r^{n-3} \right) e^{i (n-1) \te}
\nonumber \\
              &- \ep^2 \sum_{n \in \cN} \frac{i \overline{a_n} \ka}{4 n}
              \left( (n-1) r^{n-1} - (n+1) r^{n-3} \right) e^{-i (n-1) \te}
              + \cO(\ep^3)\,,
\nonumber \\
         =: {} & \ep u_\te^{(1)} + \ep^2 u_\te^{(2)} + \cO(\ep^3)
              \,,
\label{eq:ute12}
\end{align}
\endgroup
and we recall that $a_n\equiv a_n(\al)$, $\ka\equiv\ka(\al)$ and $\tau\equiv\tau(\al)$.

Proceeding in the same way, we obtain that the field $\nabla\hat\psi$ satisfies the estimates
\[
(\nabla\hat\psi)_\alpha = \cO(\ep^3) \,,
\quad
(\nabla\hat\psi)_r = \ep^\mu v_r^{(\mu)} + \cO(\ep^3) \,,
\quad
(\nabla\hat\psi)_\theta = \ep^\mu v_\te^{(\mu)} + \cO(\ep^3) \,,
\]
where the $\ep$-independent dominant terms $ v_r^{(\mu)}, v_\te^{(\mu)}$ depend on the particular choice of the function $\hat f$.

In the following subsections we shall give sufficient conditions on the functions $a_n(\al)$ to characterize the Poincar\'e
map of the above Beltrami field $v$, finding a suitable balance between the application
of KAM theory in the interior of the tube and the existence of
hyperbolic periodic orbits. This task is nontrivial due to the fact that Beltrami fields are non-generic.

\subsection{The Poincar\'e map}\label{S:Poinc}
In order to integrate the vector field, it is convenient to
consider instead of~$v$ the rescaled field
\[
X:=\frac v{v_\al}\,,
\]
which has the same integral curves of~$v$ up to a
reparametrization. Notice that $v_\al$ does not vanish, for small
enough~$\ep$, because $v_\al=h_\al+\cO(\ep^3)=1+\cO(\ep)$ by the asymptotic formulas
for the harmonic field given in~\eqref{eq:HF:unscaled}.

\begin{proposition}\label{prop:BeltramiX}
The components of the vector field~$X$ are given by
\begin{equation}\label{eq:BeltramiX}
\begin{split}
X_\al&= 1+\cO(\ep^3) \, ,\\[1mm]
X_r&  = \ep X_r^{(1)}+
\ep^2 X_r^{(2)} + \ep^\mu X^{(\mu)}_r+ \cO(\ep^3) \, , \\[1mm]
X_\te& =  -\tau + \ep X_\theta^{(1)} + \ep^2 X_\theta^{(2)}  + \ep^\mu X^{(\mu)}_\te+ \cO(\ep^3)\, ,
\end{split}
\end{equation}
where
\begin{align*}
X_r^{(1)} &:= -\frac{3 r^2 -3 }{8} (\tau \kappa \sin \theta+ \kappa'
\cos\theta) + u_r^{(1)} \,, \\
X_r^{(2)} &:= -\frac{r^3-r}{6}(\tau \kappa^2 \sin{2\theta} +
\kappa \kappa' \cos {2\theta})+ \frac{3(r^3-r)}{8} \kappa \kappa' + H_1 \\
& \qquad - 2\ka r \cos{\te} u_r^{(1)} + u_r^{(2)} \,, \\
X_r^{(\mu)} &:= v_r^{(\mu)} \,,\\
 X_\theta^{(1)} &:= \frac{r^2-3}{8r} (-\tau \kappa \cos \theta+\kappa' \sin
\theta) + u_\te^{(1)} \,, \\
 X_\theta^{(2)} &:= \frac{7r^2-8}{48} (-\tau \kappa^2 \cos 2\theta + \kappa
\kappa' \sin {2\theta}) -\frac{3-r^2}{8} \tau \kappa^2 \\
& \qquad - 2\ka r \cos{\te} u_\te^{(1)} + u_\te^{(2)} \,,\\
X_\theta^{(\mu)} &:= v_\theta^{(\mu)}+\tau v_\al^{(\mu)} \, .
\end{align*}
\end{proposition}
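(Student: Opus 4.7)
The proof is a direct perturbative calculation: having written $v$ explicitly as $v=h+\nabla\psi+\nabla\hat\psi+\mathcal{O}(\epsilon^3)$, one divides componentwise by $v_\alpha$ and collects terms in powers of~$\epsilon$. The plan is to begin by assembling the known expansions: for $h$ use Proposition~\ref{prop:HF}, for $\nabla\psi$ use the explicit formulas~\eqref{eq:ur12}–\eqref{eq:ute12}, and for $\nabla\hat\psi$ use the estimates stated just after them. The first key observation is that the $\alpha$-components of both $\nabla\psi$ and $\nabla\hat\psi$ are only $\mathcal{O}(\epsilon^3)$ (a feature of the particular form of the boundary data, as computed at the end of Subsection~\ref{ssec:lBF}), so that
\[
v_\alpha=1+\epsilon\,h_\alpha^{(1)}+\epsilon^2 h_\alpha^{(2)}+\mathcal{O}(\epsilon^3)
     =1+2\epsilon\kappa r\cos\theta+\epsilon^2(3\kappa^2 r^2\cos^2\theta+H_2)+\mathcal{O}(\epsilon^3).
\]

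Next I would expand the reciprocal as a geometric series, obtaining
\[
\frac{1}{v_\alpha}=1-2\epsilon\kappa r\cos\theta+\epsilon^2\bigl(\kappa^2 r^2\cos^2\theta-H_2\bigr)+\mathcal{O}(\epsilon^3),
\]
and then multiply this series against $v_r$ and $v_\theta$ to extract $X_r$ and $X_\theta$ order by order (note $X_\alpha=v_\alpha/v_\alpha\equiv 1$, so that claim is trivial). At order~$\epsilon$ the contribution to $X_r$ is simply $h_r^{(1)}+u_r^{(1)}$, while for $X_\theta$ one gets $h_\theta^{(1)}+u_\theta^{(1)}+2\tau\kappa r\cos\theta$; in both cases the $-2\tau\kappa r\cos\theta$ piece in $h_\theta^{(1)}$ (respectively the absence of such a piece in $h_r^{(1)}$) makes the matching with $X_r^{(1)}$ and $X_\theta^{(1)}$ immediate.

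At order~$\epsilon^2$ the bookkeeping is more delicate and constitutes the main (though still routine) work. One collects
\[
X_r^{(2)}=h_r^{(2)}+u_r^{(2)}-2\kappa r\cos\theta\,(h_r^{(1)}+u_r^{(1)}),\qquad
X_\theta^{(2)}=h_\theta^{(2)}+u_\theta^{(2)}-2\kappa r\cos\theta\,(h_\theta^{(1)}+u_\theta^{(1)})-\tau\bigl(\kappa^2r^2\cos^2\theta-H_2\bigr),
\]
and expands the products $\cos\theta\cdot(\tau\kappa\sin\theta+\kappa'\cos\theta)$ and its analogue via the double-angle identities $2\sin\theta\cos\theta=\sin2\theta$, $2\cos^2\theta=1+\cos2\theta$. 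After gathering, the coefficients $-13/24$ and $-13/48$ inherited from Proposition~\ref{prop:HF} combine with the contributions $+9/24$ and $+6/48$ arising from $-2\kappa r\cos\theta\cdot h^{(1)}$ to produce the clean values $-1/6$ and $-7/48$ in the statement.

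The step I expect to require most care is verifying the cancellations that keep the unknown functions $H_1,H_2$ under control. For $X_\theta^{(2)}$, the term $-\tau H_2$ in $h_\theta^{(2)}$ cancels exactly against $+\tau H_2$ coming from $-\tau\bigl((h_\alpha^{(1)})^2-h_\alpha^{(2)}\bigr)$, and simultaneously the three $\tau\kappa^2r^2\cos^2\theta$ contributions with coefficients $-3,+4,-1$ cancel, so that only $H_1$ (inside $h_r^{(2)}$) survives the reduction, consistent with the remark following Proposition~\ref{prop:HF}. For $X_r^{(2)}$ the leading $v_r=\mathcal{O}(\epsilon)$ means that $H_2$ never multiplies anything at order $\epsilon^2$, so it does not appear at all. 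Finally, at order~$\epsilon^\mu$ one simply picks up the leading contributions from $\nabla\hat\psi$ (with the $\tau v_\alpha^{(\mu)}$ term in $X_\theta^{(\mu)}$ arising from expanding $(-\tau)/v_\alpha$ in the generic case where $v_\alpha$ carries a term of that order), completing the proof.
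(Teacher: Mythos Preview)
Your proposal is correct and follows essentially the same approach as the paper: expand $v_\alpha$, $v_r$, $v_\theta$ using Proposition~\ref{prop:HF} together with the contributions from $\nabla\psi$ and $\nabla\hat\psi$, invert $v_\alpha$ as a geometric series, and collect terms order by order in~$\epsilon$. The paper's proof is terser and does not spell out the individual cancellations of $H_2$ and of the $\tau\kappa^2 r^2\cos^2\theta$ terms that you highlight, but the computation is identical (your parenthetical remark on $v_\alpha^{(\mu)}$ is also apt: in the setup of Subsection~\ref{S.normal} this term vanishes, and the paper simply writes the formula in a slightly more general form).
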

\begin{proof}
From the definition of the field~$v$ and the expansion of the harmonic
field we have
\begin{align*}
v_\al = {} & 1 + \ep h_\al^{(1)} + \ep^2 h_\al^{(2)} + \ep^\mu v_\al^{(\mu)} + \cO(\ep^3) \,, \\
v_r = {} &  \ep h_r^{(1)} + \ep u_r^{(1)} + \ep^2 h_r^{(2)} + \ep^2 u_r^{(2)} + \ep^\mu v_r^{(\mu)} + \cO(\ep^3) \,, \\
v_\theta = {} & -\tau+ \ep h_\theta^{(1)} + \ep u_\theta^{(1)} + \ep^2 h_\theta^{(2)} + \ep^2 u_\theta^{(2)}+ \ep^\mu v_\theta^{(\mu)} + \cO(\ep^3) \,.
\end{align*}
Hence one arrives at
\begin{align*}
\frac{v_r}{v_\al} = {} & \ep h_r^{(1)} + \ep u_r^{(1)} +\ep^2 [-h_r^{(1)} h_\al^{(1)}-u_r^{(1)} h_\al^{(1)}+h_r^{(2)}+u_r^{(2)}] + \ep^\mu v_r^{(\mu)} + \cO(\ep^3)\,, \\
\frac{v_\theta}{v_\al} = {} & -\tau + \ep [\tau h_\al^{(1)}+h_\theta^{(1)}+u_\theta^{(1)}] \\
& + \ep^2 [-\tau (h_\al^{(1)})^2 +\tau h_\al^{(2)}-h_\theta^{(1)} h_\al^{(1)}+h_\theta^{(2)}
-u_\theta^{(1)} h_\al^{(1)}+u_\theta^{(2)}] \\
& + \ep^\mu [v_\theta^{(\mu)}+\tau v_\al^{(\mu)}] + \cO(\ep^3)\,.
\end{align*}
The desired expressions are now obtained by substituting the expressions
for the components of the field $h$
derived in Proposition~\ref{prop:HF}.
For convenience, we do not substitute the expressions of
$u_r^{(1)}$,
$u_r^{(2)}$,
$u_\te^{(1)}$, and
$u_\te^{(2)}$, given by~\eqref{eq:ur12} and \eqref{eq:ute12}.
\end{proof}

\begin{remark}
Notice that when $f=0$ (i.e. the Beltrami field has no normal component on the boundary of the tube), we recover the
local Beltrami field considered in~\cite{Acta}.
\end{remark}

The trajectories of $X$ are given by the
parametrization $(\al(s),r(s),\te(s))$ satisfying
\[
\dot \al =1\,,
\qquad \dot r = X_r(\al,r,\te)\,,
\qquad \dot \te = X_\te(\al,r,\te)\,,
\]
with initial condition $(\al(0),r(0),\te(0))=(\al_0,r_0,\te_0)$.
We will also denote by $\phi_s$ the time-$s$ flow of the field $X$, which is a well defined diffeomorphism of $\SS_\ell^1 \times \DD$ for all
values of $s$. Let us now consider the Poincar\'e map of the field $X$, which is the
tool we will use to analyze the dynamical properties of the flow (and which coincides
with that of the local Beltrami field $v$). For this, we start by considering the
section $\{\al=0\}$, which is clearly transverse to the vector field $X$. The
Poincar\'e map of this section, $\Pi : \DD_{R} \rightarrow \DD$ (where
$\DD_R$ denotes the disk of radius $R<1$), sends each point
$(r_0,\te_0) \in \DD_R$ to the first point at which the trajectory
$\phi_s(0,r_0,\te_0)$ intersects the section $\{\al=0\}$ (with $s>0$). The
reason why we are considering the field $X$ is that it is isochronous in the sense
that this first return point is given by the time-$\ell$ flow of $X$, that is,
\begin{equation}\label{eq:Poin:map}
\Pi(r_0,\te_0) := \phi_\ell (0,r_0,\te_0) = (r(\ell),\te(\ell)).
\end{equation}
It should be noticed that~$\Pi$ is area-preserving:

\begin{proposition}[Proposition 7.3 in \cite{Acta}]\label{P:area}
The Poincar\'e map $\Pi:\DD_R\to\DD$ preserves the positive measure
\[
B v_\al\big|_{\al=0}\,r\, dr\, d\te= \big[1+\cO(\ep)\big]\, r\, dr\, d\te\,.
\]
\end{proposition}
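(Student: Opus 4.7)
The plan is to exploit the fact that the Beltrami field~$v$ is divergence-free with respect to the Euclidean volume, which automatically forces its first-return map on the section $\Si:=\{\al=0\}$ to preserve a canonical 2-form on~$\Si$. Since $X$ and $v$ share the same integral curves and $X$ is normalized precisely so that the return time to~$\Si$ is~$\ell$, the Poincar\'e map~$\Pi$ coincides with this first-return map, and it suffices to identify the restriction of $\iota_v\Om$ to~$\Si$.

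Concretely, in the coordinates $(\al,r,\te)$ the Euclidean volume reads $\Om=\ep^2 B\ms r\, d\al\wedge dr\wedge d\te$, and $\Div v=0$ translates into $d(\iota_v\Om)=0$ on~$\cT_\ep$. I would then invoke the standard Stokes-type argument: given a small 2-disk $D\subset\Si$ and the flowbox it sweeps under~$v$ up to its image $\Pi(D)\subset\Si$, the lateral piece of the flowbox is tangent to~$v$, so $\iota_v\Om$ pulls back to zero there, and closedness of $\iota_v\Om$ yields $\int_{\Pi(D)}\iota_v\Om=\int_D\iota_v\Om$. This is exactly $\Pi^*(\iota_v\Om|_\Si)=\iota_v\Om|_\Si$.

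Next, restricting $\iota_v\Om$ to $\{\al=0\}$ kills every $d\al$-term, leaving $\ep^2 B v_\al|_{\al=0}\, r\, dr\wedge d\te$. Dropping the harmless constant~$\ep^2$ produces the invariant measure appearing in the statement. The asymptotic identity $B v_\al|_{\al=0}=1+\cO(\ep)$ then reads off directly from $B=1-\ep\ka r\cos\te$ together with the expansion $v_\al=h_\al+(\nabla\psi)_\al+(\nabla\hat\psi)_\al+\cO(\ep^3)=1+\cO(\ep)$, which follows from Proposition~\ref{prop:HF} and the bounds $(\nabla\psi)_\al,(\nabla\hat\psi)_\al=\cO(\ep^3)$ recorded in Subsection~\ref{ssec:lBF}.

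I do not foresee any serious obstacle here: the result is essentially a one-line consequence of the general principle that a divergence-free field with a transverse section admits a first-return map preserving $\iota_v\Om|_\Si$. The only points requiring care are the bookkeeping of the non-Euclidean volume density $B\ms r$ in the curvilinear coordinates and the verification that $v_\al$ is bounded away from zero for small~$\ep$, which is precisely what justifies passing from~$v$ to the reparametrized field $X=v/v_\al$ in the first place.
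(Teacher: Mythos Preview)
Your proposal is correct and follows the standard argument; the paper itself does not give a proof here but simply cites \cite[Proposition 7.3]{Acta}, and the brief remark in the Appendix (end of the proof of Proposition~\ref{prop:Poin1}) confirms that the intended reasoning is precisely the one you outline: divergence-freeness of the field gives volume preservation by the flow, and hence the first-return map preserves the induced area form $Bv_\al|_{\al=0}\,r\,dr\,d\te$ on the section. Your identification of $\iota_v\Om|_{\{\al=0\}}$ and the asymptotic check are both correct.
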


Our goal is to write this Poincar\'e map as an asymptotic expansion in $\ep$, so that the existence of
invariant quasiperiodic curves can be proved by applying the KAM theorem, while the existence of hyperbolic periodic points be obtained via a Melnikov subharmonic method. To this end, we will need to select suitable functions $a_n$ in the normal data.

\subsection{Choice of normal data and computation of the Poincar\'e map}
\label{S:KAM:case2}

In order to evaluate the Poincar\'e map~\eqref{eq:Poin:map}, we need to compute the solution
\begin{align*}
r(s) = {} & r^{(0)}(s) + \ep r^{(1)}(s) + \ep^2 r^{(2)}(s) + \cO(\ep^\mu)\,,\\
\te(s) = {} & \te^{(0)}(s) + \ep \te^{(1)}(s) + \ep^2 \te^{(2)}(s) + \cO(\ep^\mu)\,,
\end{align*}
with initial conditions $r(0)=r_0$ and $\te(0)=\te_0$. The $0^\mathrm{th}$-order is
\[
r^{(0)}(s) = r_0\,, \qquad
\te^{(0)}(s) = \te_0 + T(s)\,, \qquad
T(s) := -\int_0^s \tau(\al) d\al\,.
\]

To this end, we introduce the notation
\begin{align}
&T_0:=T(\ell)=-\int_0^\ell \tau(\al) d\al\,,
\\&g(s) := T(s)-T_0 \frac{s}{\ell} = -\int_0^s \tau(\al) d\al + \frac{s}{\ell} \int_0^\ell \tau(\al) d\al\,,
\end{align}
and we observe that $g$ is an $\ell$-periodic function satisfying $g(0)=g(\ell)=0$.

In what follows we shall assume that the total torsion of the curve is rational, i.e. it satisfies the assumption
\begin{equation}\label{eq:T0pq}
T_0 = \frac{2 \pi p}{q}\,,
\end{equation}
for some coprime integers $p,q$. Notice that $T_0$ corresponds to the (degenerate) frequency of the unperturbed problem ($\ep=0$), so~\eqref{eq:T0pq} is a resonance condition. This assumption will be crucial to gain control
of the trajectories of the Beltrami field in order to create hyperbolic periodic orbits.

The normal datum $f$ chosen in Subsection~\ref{S.normal} will be constructed using functions $a_n$ of the form
\begin{equation}\label{eq:my:an}
a_n = \Gamma_n' - i n \tau \Gamma_n\,,
\end{equation}
$n\in\cN\subset \NN\cap [2,\infty]$, where $\Gamma_n\equiv\Gamma_n(\al)$ is a real $\ell$-periodic function, and $\Gamma_n'$ denotes the $\alpha$-derivative of $\Gamma_n$.

In order to obtain an expression for the Poincar\'e map that allows us to apply a subharmonic Melnikov method to create hyperbolic periodic orbits, we further assume that the functions $\Gamma_n$ satisfy the integral relations

\begin{align}
& \int_0^{q \ell} \kappa(\si) \tau(\si) \Gamma_n(\si) R_{n+k}(\si) d\si = 0\,, \quad
k \in \{-1,1\}\,, \quad
n \in \cN\,,
\label{SCond1:1} \\
& \int_0^{q \ell} \kappa'(\si) \Gamma_n(\si) R_{n+k}(\si) d\si = 0\,, \quad
k \in \{-1,1\}\,, \quad
n \in \cN\,,
\label{SCond1:2} \\
& \int_0^{q \ell} \Gamma_j(\si) \Gamma_n'(\si) R_{n+k}(\si) d\si = 0\,, \quad
k \in \{-j,j\}\,, \quad
(j,n) \in \cN\times\cN\,,
\label{SCond2} \\
& \int_0^{q \ell} \tau \Gamma_j(\si) \Gamma_n(\si) R_{n+k}(\si) d\si = 0\,, \quad
k \in \{-j,j\}\,, \quad
(j,n) \in \cN\times\cN\,, \quad
n \neq j\,.
\label{SCond3}
\end{align}
where the function $R_{n+k}$ is defined as
\[
R_{n+k}(\si) := e^{i (n+k) T(\si)} = e^{i (n+k)g(\si)} e^{i \frac{2 \pi p (n+k)
\si}{q \ell}} \,.
\]

Before stating the main result of this subsection, we prove the following instrumental lemma, which provides additional integral identities for the functions $\Ga_n$.

\begin{lemma}
Under the hypotheses
\eqref{SCond1:1} and
\eqref{SCond1:2}, we have
\begin{equation}\label{eq:Lema:int1}
\int_0^{q\ell} \kappa a_n R_{n+k} = 0\,, \quad
k \in \{-1,1\}\,, \quad
n \in \cN\,,
\end{equation}
and
\begin{equation}\label{eq:Lema:int2}
\int_0^{q\ell} \Gamma_j a_n R_{n+k} = 0\,, \quad
k \in \{-j,j\}\,, \quad
(j,n) \in \cN\times\cN\,, \quad
n \neq j\,.
\end{equation}
\end{lemma}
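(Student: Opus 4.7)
The plan is to insert the definition $a_n=\Gamma_n'-in\tau\Gamma_n$ into each integral and reduce everything to the four hypotheses~\eqref{SCond1:1}--\eqref{SCond3}. Before starting, I would record two preliminary facts about the weight $R_{n+k}$ that will be used repeatedly. First, its derivative is explicit,
\[
R_{n+k}'(\sigma)=-i(n+k)\tau(\sigma)\,R_{n+k}(\sigma)\,,
\]
which is what lets the integration by parts below produce only terms of a shape already controlled by the hypotheses. Second, the resonance assumption~\eqref{eq:T0pq} implies that $R_{n+k}$ is $q\ell$-periodic: indeed,
\[
T(\sigma+q\ell)=T(\sigma)-\int_\sigma^{\sigma+q\ell}\tau=T(\sigma)+qT_0=T(\sigma)+2\pi p\,,
\]
so $R_{n+k}(\sigma+q\ell)=R_{n+k}(\sigma)$. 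Together with the $\ell$-periodicity of $\kappa$, $\tau$ and $\Gamma_n$, this kills all boundary contributions when integrating by parts on $[0,q\ell]$.

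For the first identity~\eqref{eq:Lema:int1}, writing $a_n=\Gamma_n'-in\tau\Gamma_n$ yields
\[
\int_0^{q\ell}\kappa\,a_n\,R_{n+k}\,d\sigma
=\int_0^{q\ell}\kappa\,\Gamma_n'\,R_{n+k}\,d\sigma
-in\int_0^{q\ell}\kappa\tau\,\Gamma_n\,R_{n+k}\,d\sigma.
\]
The second integral vanishes directly by hypothesis~\eqref{SCond1:1}. For the first, I would integrate by parts, using the two preliminary facts to discard the boundary term and to evaluate $R_{n+k}'$, obtaining
\[
\int_0^{q\ell}\kappa\,\Gamma_n'\,R_{n+k}\,d\sigma
=-\int_0^{q\ell}\kappa'\,\Gamma_n\,R_{n+k}\,d\sigma
+i(n+k)\int_0^{q\ell}\kappa\tau\,\Gamma_n\,R_{n+k}\,d\sigma.
\]
Both integrals on the right-hand side vanish by~\eqref{SCond1:2} and~\eqref{SCond1:1} respectively, so~\eqref{eq:Lema:int1} follows.

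The second identity~\eqref{eq:Lema:int2} is even more direct: substituting the definition of $a_n$,
\[
\int_0^{q\ell}\Gamma_j\,a_n\,R_{n+k}\,d\sigma
=\int_0^{q\ell}\Gamma_j\,\Gamma_n'\,R_{n+k}\,d\sigma
-in\int_0^{q\ell}\tau\,\Gamma_j\,\Gamma_n\,R_{n+k}\,d\sigma,
\]
and for $k\in\{-j,j\}$ the first integral vanishes by~\eqref{SCond2} while, under the additional assumption $n\neq j$, the second vanishes by~\eqref{SCond3}. This proves~\eqref{eq:Lema:int2} and completes the lemma.

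The only step that required any genuine idea is the integration by parts in~\eqref{eq:Lema:int1}; the potential obstacle there would have been a stray boundary term, but this is precisely where the resonance hypothesis $T_0=2\pi p/q$ intervenes, forcing $R_{n+k}$ to be $q\ell$-periodic. The rest is bookkeeping.
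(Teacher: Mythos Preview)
Your proof is correct and follows essentially the same approach as the paper: substitute $a_n=\Gamma_n'-in\tau\Gamma_n$, use~\eqref{SCond1:1} to kill the $\tau$-term, and integrate by parts on the $\Gamma_n'$-term to reduce to~\eqref{SCond1:1} and~\eqref{SCond1:2}. Your treatment of~\eqref{eq:Lema:int2} is in fact more explicit than the paper's, which simply says the argument is ``analogous''; you correctly observe that here no integration by parts is needed, since~\eqref{SCond2} and~\eqref{SCond3} apply directly --- note that this means the lemma as stated in the paper tacitly relies on~\eqref{SCond2} and~\eqref{SCond3} as well, not just~\eqref{SCond1:1}--\eqref{SCond1:2}.
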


\begin{proof}
Using \eqref{eq:my:an} we have
\[
\int_0^{q\ell} \kappa a_n R_{n+k} =
\int_0^{q\ell} \kappa \Gamma_n' R_{n+k} -
\int_0^{q\ell} i n \kappa \tau \Gamma_n R_{n+k}
\]
and, using~\eqref{SCond1:1}, we observe that the second integral vanishes.
The fact that the first one also vanishes follows using integration by parts:
\[
\int_0^{q\ell} \kappa \Gamma_n' R_{n+k} =
-
\int_0^{q\ell} \Gamma_n (\kappa' R_{n+k} - i (n+k) \kappa \tau R_{n+k}) = 0.
\]
Then, Property~\eqref{eq:Lema:int1} is obtained using~\eqref{SCond1:1}
and~\eqref{SCond1:2}. Property~\eqref{eq:Lema:int2} is analogous.
\end{proof}

With all these hypotheses and using the
definition~\eqref{eq:Poin:map}, we can now compute a closed form for the $q$-th iterate of the Poincar\'e map $\Pi$:

\begin{proposition}\label{eq:Prop:Poin:res}
Assume that the assumptions~\eqref{eq:T0pq} and~\eqref{SCond1:1}--\eqref{SCond3} hold. Then, we have
\[
\Pi^{q}(r_0,\theta_0)=
\begin{pmatrix}
r_0 + \ep^{\mu} \Pi_r^{(\mu)}(r_0,\te_0) + \cO(\ep^3) \\
\te_0 + \omega(r_0) + \ep^{\mu}  \Pi_\te^{(\mu)}(r_0,\te_0) + \cO(\ep^3)
\end{pmatrix}\,,
\]
where
\begin{align*}
\omega(r_0) = {} &  2\pi p - q\ep^2 \left( \frac{12-r_0^2}{32} \int_0^{\ell} \tau \kappa^2 - \sum_{n \in \cN} 4
n(n-1) r_0^{2n-4} \int_0^{\ell} \tau \Gamma_n^2 \right)\,, \\
\Pi_r^{(\mu)}(r_0,\te_0) = {} & \int_0^{q\ell} v_r^{(\mu)} (s,r_0,\te_0+T(s)) ds \,, \\
\Pi_\te^{(\mu)}(r_0,\te_0) = {} & \int_0^{q\ell}
(v_\te^{(\mu)} (s,r_0,\te_0+T(s))
+ \tau(s) v_\al^{(\mu)} (s,r_0,\te_0+T(s))) ds \,.
\end{align*}
\end{proposition}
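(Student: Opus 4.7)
The plan is to integrate the ODE $\dot r = X_r$, $\dot\te = X_\te$ perturbatively in $\ep$ on the interval $[0,q\ell]$, using that by isochrony the $q$-th Poincar\'e iterate equals $\phi_{q\ell}(0,r_0,\te_0)$ projected onto $\DD$ (up to a re-timing of order $\cO(\ep^3)$ coming from $X_\al = 1 + \cO(\ep^3)$). I would write
\[
r(s) = r_0 + \ep\, r^{(1)}(s) + \ep^2 r^{(2)}(s) + \ep^\mu r^{(\mu)}(s) + \cO(\ep^3),
\]
and analogously $\te(s) = \te_0 + T(s) + \ep\,\te^{(1)} + \ep^2 \te^{(2)} + \ep^\mu \te^{(\mu)} + \cO(\ep^3)$, then plug into the ODE and Taylor-expand the field from Proposition~\ref{prop:BeltramiX} around $(s,r_0,\te_0+T(s))$ to derive the ODEs for the successive corrections.

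The essential identity at order $\ep^1$ is that the choice $a_n = \Gamma_n' - in\tau\Gamma_n$ makes $a_n R_n = (\Gamma_n R_n)'$ (using $R_n' = -in\tau R_n$), and likewise $(\kappa'-i\tau\kappa)R_1 = (\kappa R_1)'$. Combined with the resonance $R_k(q\ell) = e^{ikqT_0} = e^{2\pi i k p} = 1$ and the $\ell$-periodicity of $\kappa,\tau,\Gamma_n$, a single integration by parts forces $r^{(1)}(q\ell) = \te^{(1)}(q\ell) = 0$. In particular every Fourier coefficient in $\te_0$ of $r^{(1)}(s)$ and $\te^{(1)}(s)$ vanishes at $s=q\ell$, so there are no $\ep$-order terms in $\Pi^q$.

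At order $\ep^2$, the correction $r^{(2)}(q\ell)$ gathers $\int_0^{q\ell} X_r^{(2)}\,ds$ together with the linear cross-terms $\int r^{(1)}\partial_r X_r^{(1)} + \int\te^{(1)}\partial_\te X_r^{(1)}$ evaluated along the zeroth-order trajectory, and analogously for $\te^{(2)}(q\ell)$. Decomposing the integrands into Fourier modes in $\te_0$, the direct contribution from $X_r^{(2)}$ has vanishing $\te_0$-zero-mode after $\al$-integration (because $\int_0^\ell \kappa\kappa'\,d\al = 0$ and $H_1$ has zero $\al$-mean), while its nonzero modes carry a factor $R_k(s)$ with $k\in\{\pm 2,\pm(n\pm 1)\}$. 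The cross-terms, after expansion, produce precisely the integrals $\int_0^{q\ell}\kappa\tau\Gamma_n R_{n\pm 1}$, $\int \kappa'\Gamma_n R_{n\pm 1}$, $\int\Gamma_j\Gamma_n' R_{n\pm j}$ and $\int\tau\Gamma_j\Gamma_n R_{n\pm j}$ appearing in \eqref{SCond1:1}--\eqref{SCond3} (and in \eqref{eq:Lema:int1}--\eqref{eq:Lema:int2}), all of which vanish by hypothesis. What survives is the $\te_0$-zero-mode of the angular component: the $\al$-average of $X_\te^{(2)}$ (specifically the $-\frac{3-r^2}{8}\tau\kappa^2$ piece, contributing the $\int_0^\ell \tau\kappa^2$ term) together with the diagonal $j=n$ cross-contributions of the $\te^{(1)}\partial_\te u_\te^{(1)}$-type terms (contributing the $\int_0^\ell \tau\Gamma_n^2$ pieces), integrated over $[0,q\ell]$ to generate the factor $q$, recovers the stated formula for $\omega(r_0)$.

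Finally, since $\mu>2$, any coupling of the $\ep^\mu$-perturbation with the $\ep$ or $\ep^2$ corrections contributes at order $\ep^{\mu+1}\subseteq\cO(\ep^3)$; hence the $\ep^\mu$-term in $\Pi^q$ equals the integral of $X_r^{(\mu)}$ and $X_\te^{(\mu)}$ along the unperturbed trajectory $(s,r_0,\te_0+T(s))$, yielding the formulas for $\Pi_r^{(\mu)}$ and $\Pi_\te^{(\mu)}$. The main technical obstacle is the bookkeeping at order $\ep^2$: one must classify every cross-term and verify that each nonzero $\te_0$-frequency mode matches one of the integral patterns in the hypotheses, and that the $\te_0$-zero-mode of the radial correction cancels completely (so that $r^{(2)}(q\ell)=0$ identically, not merely on average). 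Once this classification is done, identifying $r(q\ell)$ and $\te(q\ell)$ modulo $2\pi$ with $\Pi^q(r_0,\te_0)$ (absorbing the shift $qT_0 = 2\pi p$ into the definition of $\omega$) completes the proof.
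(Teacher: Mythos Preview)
Your approach is essentially the same as the paper's: perturbative integration of the ODE on $[0,q\ell]$, the identity $a_nR_n=(\Gamma_nR_n)'$ to kill the first-order terms via the resonance condition, and the integral hypotheses \eqref{SCond1:1}--\eqref{SCond3} to kill the non-zero $\te_0$-modes at second order. The paper carries out the same programme with explicit complex-exponential expansions rather than your Fourier-mode language.

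One small inaccuracy in your bookkeeping: you attribute the surviving $\int_0^\ell\tau\kappa^2$ term entirely to the $-\tfrac{3-r^2}{8}\tau\kappa^2$ piece of $X_\te^{(2)}$, but that alone would produce the coefficient $\tfrac{r_0^2-3}{8}$, not $\tfrac{r_0^2-12}{32}$. The harmonic-field cross-terms (e.g.\ the $\te_0$-zero-mode of $r^{(1)}\partial_r X_\te^{(1)}$ coming from the $\kappa$-pieces of $X_r^{(1)}$ and $X_\te^{(1)}$, which yields a term proportional to $\tau\kappa^2$) also contribute to this coefficient. Similarly, the $\tau\Gamma_n^2$ pieces arise not only from $\te^{(1)}\partial_\te u_\te^{(1)}$-type terms but from the full diagonal $j=n$ contribution across all cross-terms. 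Since you already flag the order-$\ep^2$ bookkeeping as the main obstacle this is not a gap in method, only in the attribution of surviving terms; but if you actually carried out the computation as described you would get the wrong numerical coefficients in $\omega(r_0)$.
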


\begin{remark}
In Section~\ref{S.Proof} we will apply a subharmonic Melnikov method to show that
the terms of order $\ep^{\mu}$ can be selected to destroy some resonant invariant tori,
thus creating hyperbolic (and elliptic) periodic orbits in the interior of the tube.
\end{remark}

\begin{proof}[Proof of Proposition~\ref{eq:Prop:Poin:res}]
For the sake of clarity, it is convenient to use complex exponents. Using complex
exponentials and expanding the terms
$u_r^{(1)}$,
$u_\te^{(1)}$,
$u_r^{(2)}$ and
$u_\te^{(2)}$, the scaled vector field $X$ (see Proposition~\ref{prop:BeltramiX}) reads
\begingroup
\allowdisplaybreaks
\begin{align*}
X_r^{(1)} = {} & \frac{3(r^2-1)}{16} \left( (\tau \ka i - \ka') e^{i \te} + (-\tau \ka i -\ka') e^{-i\te} \right) \\
          &  + \sum_{n \in \cN} r^{n-1} (a_n e^{i n \te}+\overline{a_n} e^{-i n \te})\,,\\
X_\te^{(1)} = {} & \frac{r^2-3}{16 r} \left(  (-\tau \ka - \ka' i) e^{i \te} + (-\tau \ka + \ka' i) e^{-i\te}\right) \\
            & + \sum_{n \in \cN} i r^{n-2} (a_n  e^{i n \te}-\overline{a_n} e^{-i n \te})\,, \\
X_r^{(2)} = {} &
\frac{3(r^3-r)}{8} \ka \ka'
- \frac{3(r^3-r)}{16}
 \left( (\tau \ka^2 i - \ka \ka') e^{i 2 \te}  + (-\tau \ka^2 i -\ka \ka') e^{-i 2 \te}
 \right) \\
&
- \sum_{n \in \cN} r^{n}(
\ka a_n  e^{i (n+1) \te}+ \ka \overline{a_n} e^{-i (n+1) \te}
) \\
& - \sum_{n \in \cN} r^{n}(
\ka a_n  e^{i (n-1) \te}+ \ka \overline{a_n} e^{-i (n-1) \te}
) \\
& + \frac{13(r^3-r)}{48}\left( (\tau \ka^2 i - \ka \ka') e^{i 2\te} + (-\tau \ka^2 i - \ka \ka') e^{-i 2\te} \right) + H_1 \\
& + \sum_{n \in \cN} \frac{ (n+1) r^n - (n+1) r^{n-2}}{4 n}
              \left( \ka a_n e^{i (n-1) \te} + \ka \overline{a_n} e^{-i (n-1) \te} \right) \,, \\
X_\te^{(2)} = {} &
 \frac{r^2-3}{8} \tau \ka^2
-\frac{r^2-3}{16}
\left(
(-\tau \ka^2 - \ka \ka' i) e^{i 2 \te} + (-\tau \ka^2 + \ka \ka'i) e^{- i 2 \te}
\right) \\
& - \sum_{n\in \cN} r^{n-1}
\left(
i \ka a_n e^{i (n+1)\te} - i \ka \overline{a_n} e^{-i (n+1)\te}
\right) \\
& - \sum_{n\in \cN} r^{n-1}
\left(
i \ka a_n e^{i (n-1)\te} - i \ka \overline{a_n} e^{-i (n-1)\te}
\right) \\
& + \frac{13(r^2-2)}{96}
\left(
(-\tau \ka^2 - \ka \ka'i) e^{i 2 \te}+(-\tau \ka^2 + \ka \ka'i) e^{-i 2 \te}
\right) \\
& + \sum_{n \in \cN}
\frac{(n-1) r^{n-1}-(n+1) r^{n-3}}{4 n}
\left(
i \ka a_n e^{i (n-1) \te} - i \ka \overline{a_n} e^{-i (n-1) \te}
\right)\,.
\end{align*}
\endgroup
Now we integrate the trajectories of the above vector field. The procedure is
analogous to the computations performed in the Appendix. The first order
terms are readily obtained:
\begingroup
\allowdisplaybreaks
\begin{align*}
r^{(1)}(s) = {} & \int_0^s X_r^{(1)}[\si] d\si \\
= {} & \frac{3(r_0^2-1)}{16}
\left(
- \ka e^{i (\te_0+T)} - \ka e^{-i (\te_0+T)}
+ \ka(0) e^{i \te_0} + \ka(0) e^{-i \te_0}
\right) \\
& + \sum_{n \in \cN} r_0^{n-1}
\left(
\Gamma_n e^{i n(\te_0+T)} + \Gamma_n e^{-in (\te_0+T)}
+ \Gamma_{n}(0) e^{i n\te_0} + \Gamma_{n}(0) e^{-i n\te_0}
\right) \,,\\
\te^{(1)}(s) = {} & \int_0^s X_\te^{(1)}[\si] d\si \\
= {} & \frac{r_0^2-3}{16 r_0}
\left(
- \ka i e^{i (\te_0+T)} + \ka i e^{-i(\te_0+T)}
+ \ka(0) i e^{i \te_0} - \ka(0) i e^{-i \te_0}
\right) \\
& + \sum_{n \in \cN} r_0^{n-2}
\left(
i \Gamma_n e^{i n(\te_0+T)}-i \Gamma_n e^{-in(\te_0 +T)}
-i \Gamma_{n}(0) e^{i n\te_0}+i \Gamma_{n}(0) e^{-i n\te_0}
\right) \,.
\end{align*}
\endgroup
Using the resonance condition~\eqref{eq:T0pq} and the periodicity of $\kappa$ and
$\Gamma_n$, we conclude that
\[
r^{(1)}(q \ell) = \te^{(1)}(q \ell) = 0 \,.
\]

The terms $r^{(2)}(s)$ and $\te^{(2)}(s)$ are obtained by reproducing the
computations in the Appendix (see Equation~\eqref{eq:comp:theta2}), but with a more involved integrand. For
example, we have
\begingroup
\allowdisplaybreaks
\begin{align*}
& \frac{\pd X_\te^{(1)}}{\pd r}[s] r^{(1)}(s)
=
 \frac{3(r^2+3)(r^2-1)}{128 r^2}
\tau \ka^2 \\
\quad & +
\frac{3(r^2+3)(r^2-1)}{256 r^2}
\ka(0) (e^{i\te_0}+e^{-i\te_0})
\left(
(-\tau \ka-\ka' i)
e^{i(\te_0+T)}
+
(-\tau \ka+\ka' i)
e^{-i(\te_0+T)}
\right) \\
\quad & +
\frac{3(r^2+3)(r^2-1)}{256 r^2}
\left(
(\tau \ka^2 + \ka \ka' i) e^{i 2(\te_0 + T)}
+
(\tau \ka^2 - \ka \ka' i) e^{-i 2(\te_0 + T)}
\right) \\
\quad & +
\sum_{n \in \cN} \frac{(r^2+3) r^{n-3}}{16}
\left(
(-\tau \ka - \ka' i) \Gamma_n e^{i (n+1) (\te_0+T)}
+
(-\tau \ka + \ka' i) \Gamma_n e^{-i (n+1) (\te_0+T)}
\right) \\
\quad & +
\sum_{n \in \cN} \frac{(r^2+3) r^{n-3}}{16}
\left(
(-\tau \ka + \ka' i) \Gamma_n e^{i (n-1) (\te_0+T)}
+
(-\tau \ka - \ka' i) \Gamma_n e^{-i (n-1) (\te_0+T)}
\right) \\
\quad & +
\sum_{n \in \cN} \frac{(r^2+3) r^{n-3}}{16}
\Gamma_{n}(0) (e^{i n \te_0}+e^{-i n \te_0})
\left(
(-\tau \ka - \ka' i) e^{i (\te_0+T)}
+
(-\tau \ka + \ka' i) e^{-i (\te_0+T)}
\right) \\
\quad & +
\sum_{n \in \cN} \frac{i 3(n-2)(r^2-1)r^{n-3}}{16}
\left(
-\ka a_n e^{i(n+1) (\te_0+T)}
+\ka \overline{a_n} e^{-i(n+1)(\te_0+T)}
\right) \\
\quad & +
\sum_{n \in \cN} \frac{i 3(n-2)(r^2-1)r^{n-3}}{16}
\left(
-\ka a_n e^{i(n-1) (\te_0+T)}
+\ka \overline{a_n} e^{-i(n-1)(\te_0+T)}
\right) \\
\quad & +
\sum_{n \in \cN} \frac{i 3(n-2)(r^2-1)r^{n-3}}{16}
\ka(0) (e^{i\te_0}+e^{-i \te_0})
\left(
a_n e^{i n (\te_0+T)}
-\overline{a_n} e^{-i n(\te_0+T)}
\right)\\
\quad & +
\sum_{(n,j) \in \cN^2} i(n-2) r^{n+j-4}
\left(
\Gamma_j a_n e^{i(n+j)(\te_0+T)}
-
\Gamma_j \overline{a_n} e^{-i(n+j)(\te_0+T)}
\right)\\
\quad & +
\sum_{(n,j) \in \cN^2} i(n-2) r^{n+j-4}
\left(
\Gamma_j a_n e^{i(n-j)(\te_0+T)}
-
\Gamma_j \overline{a_n} e^{-i(n-j)(\te_0+T)}
\right)\\
\quad & +
\sum_{(n,j) \in \cN^2} i(n-2) r^{n+j-4}
\Gamma_{j}(0)(e^{i j \te_0}+e^{-i j \te_0}) \left(
a_n e^{i n (\te_0+T)}
-
\overline{a_n} e^{-i n (\te_0+T)}
\right)\,.
\end{align*}
\endgroup
Most of the above terms will not contribute to the final computation.
As an illustration, we notice that
\begin{align*}
& \int_0^s (-\tau \ka^2 - \ka \ka'i) e^{i 2 (\te_0+T)} d \si =
\frac{1}{2} \int_0^s (- \ka^2 i e^{i 2 (\te_0+T)})' d \si \\
& \qquad = \frac{1}{2}
\left(
- \ka(s)^2 i e^{i 2 (\te_0+T(s))} + \ka(0)^2 i e^{i 2 \te_0}
\right)\,,
\end{align*}
which vanishes at $s=q \ell$,
due to the resonance condition~\eqref{eq:T0pq}.
Hence, the only term that contributes in the sum of indexes $(n,j) \in \cN\times\cN$ is the
term $n=j$, and we observe that
\[
\int_0^{q\ell}
\left(\Gamma_n(\si) a_n(\si)-\Gamma_n(\si) \overline{a_n}(\si) \right) d\si
= -2 n i \int_0^{q\ell} \tau(\si) \Gamma_n(\si)^2 d\si\,.
\]
Putting together all the terms, we obtain
\[
\te^{(2)}(q\ell) =
-\frac{12-r_0^2}{32} \int_0^{q\ell} \tau \kappa^2 + \sum_{n \in \NN} 4
n(n-1) r_0^{2n-4} \int_0^{q\ell} \tau \Gamma_n^2\,,
\]
and $r^{(2)}(q\ell) = 0$. Since the functions that are integrated in this expression are $\ell$-periodic, the statement follows.
\end{proof}

\subsection{Action-angle variables and KAM}
\label{S:KAM:case1}

It is straightforward to check that the iterated Poincar\'e map $\Pi^q$ can be written using the action-angle variables $I:=r_0^2/2$ and $\phi:=\theta_0$ as
\begin{equation}\label{PoinAA}
\Pi^{q}(I,\phi)=
\begin{pmatrix}
I + \ep^\mu \Pi_I^{(\mu)} (I,\phi) + \cO(\ep^3) \\
\phi+\hat\omega(I)
+ \ep^\mu \Pi_\phi^{(\mu)} (I,\phi)
+ \cO(\ep^3)
\end{pmatrix}\,,
\end{equation}
where the frequency function $\hat\omega(I)$ is
\begin{equation}\label{eq:trunc:freq}
\hat\omega(I) := 2\pi p - q\ep^2 \left( \frac{6-I}{16} \int_0^{\ell} \tau \kappa^2 - \sum_{n \in \cN} 2^n
n(n-1) I^{n-2} \int_0^{\ell} \tau \Gamma_n^2 \right)\,,
\end{equation}
and the terms of order $\cO(\ep^\mu)$, $2<\mu<3$, are given by
\begin{align*}
\Pi_I^{(\mu)}(I,\phi) = {} & \sqrt{2I}\int_0^{q\ell} v_r^{(\mu)}
(s,
{\sqrt{2I}},
\phi+T(s)) ds \,, \\
\Pi_\phi^{(\mu)}(I,\phi) = {} & \int_0^{q\ell}
(v_\te^{(\mu)} (s,\sqrt{2I},\phi+T(s))
+ \tau(s) v_\al^{(\mu)} (s,\sqrt{2I},\phi+T(s))) ds \,.
\end{align*}
The action $I$ takes values in the interval $(0,1/2)$.

The existence of invariant tori of the Beltrami field $v$ in the tube $\cT_\ep$ follows from a standard
application of Moser's theorem to the iterated Poincar\'e map $\Pi^q$. Specifically, using the measure $d\al\, dy$ on
$\SS^1_\ell\times\DD$ so that $|\SS^1_\ell\times\DD|=\pi\ell$, one can state the result as follows (compare with the analogous KAM theorem for harmonic fields in the Appendix, c.f. Proposition~\ref{P.tori}). Notice that the statement is independent of the particular form of the $\cO(\ep^\mu)$ perturbation.

\begin{theorem}\label{T:KAM}
Take any $\delta'>0$. Suppose that the torsion satisfies the resonant condition~\eqref{eq:T0pq}, and the twist condition
\[
\cA:=\int_0^\ell \tau(\al)\Big(\frac{\kappa(\al)^2}{16}
+48\Gamma_3{(\al)}^2\Big)\, d\al\neq0\,.
\]
Then for small enough~$\ep$ the Beltrami field~$v$ constructed in this section has a set of
invariant tori of the form
$$
\{I + \cO(\ep^\mu)=\text{constant} \}\,,
$$
contained in $\SS^1_\ell\times\DD$,
whose measure is at least $\pi\ell-\delta' -C\ep^{\frac \mu2-1}$.
\end{theorem}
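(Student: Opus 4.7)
The plan is to apply a quantitative version of Moser's invariant curve theorem to the iterated Poincar\'e map $\Pi^q$ in the action--angle coordinates displayed in~\eqref{PoinAA}, and then to lift the resulting invariant curves to invariant $2$-tori of the flow of~$v$.

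\textbf{Hypotheses of Moser's theorem.} By Proposition~\ref{P:area}, $\Pi$ preserves the density $[1+\cO(\ep)]\,r\,dr\,d\te=[1+\cO(\ep)]\,dI\,d\phi$; an $\ep$-small symplectic change of coordinates normalizes this density to the standard $dI\,d\phi$, altering the perturbation only by $\cO(\ep^3)$ terms. Differentiating~\eqref{eq:trunc:freq} and observing that the $n=2$ contribution vanishes identically because of the factor $(n-2)$, while the $n=3$ contribution produces the $I$-independent coefficient $2^3\cdot 3\cdot 2=48$, one finds
\begin{equation*}
\hat\omega'(I)=q\ep^2\bigg(\frac{1}{16}\int_0^\ell\tau\kappa^2+48\int_0^\ell\tau\Gamma_3^2+\sum_{\substack{n\in\cN\\ n\geq 4}}2^n n(n-1)(n-2)\,I^{n-3}\int_0^\ell\tau\Gamma_n^2\bigg),
\end{equation*}
so $\hat\omega'(0)=q\ep^2\cA$. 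The hypothesis $\cA\neq 0$ and continuity then yield, for any prescribed $\delta'>0$, a subset $J\subset(0,1/2)$ with $|(0,1/2)\setminus J|\leq\delta'/(2\pi\ell)$ on which $|\hat\omega'(I)|\geq c\ep^2$ for some $\ep$-independent constant $c>0$.

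\textbf{Application of Moser's theorem and lifting to $3$D.} The map $\Pi^q$ is an $\cO(\ep^\mu)$ symplectic perturbation of the integrable twist $(I,\phi)\mapsto(I,\phi+\hat\omega(I))$; in classical quantitative KAM statements (in the spirit of P\"oschel or Herman), the smallness condition scales with the ratio (perturbation)/(twist)$=\cO(\ep^{\mu-2})$, so the theorem produces a family of $\Pi^q$-invariant curves of the form $\{I+\cO(\ep^\mu)=\mathrm{const}\}$ whose complement in $J\times\SS^1$ has measure bounded by $C\sqrt{\ep^{\mu-2}}=C\ep^{\mu/2-1}$. For each such curve $\gamma$, the union $\gamma\cup\Pi(\gamma)\cup\cdots\cup\Pi^{q-1}(\gamma)$ is $\Pi$-invariant (each iterate being itself $\Pi^q$-invariant) and, under the flow of $v$ on $[0,q\ell]$, sweeps out a $v$-invariant $2$-torus inside $\cT_\ep$. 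Integrating the $2$D measure estimate along the $\al$ direction and using area preservation of the flow converts the planar bound into the claimed $3$D lower bound $\pi\ell-\delta'-C\ep^{\mu/2-1}$.

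\textbf{Main obstacle.} The delicate point is that both the twist and the perturbation are small in $\ep$, so Moser's theorem cannot be applied in its ``fixed twist, vanishing perturbation'' version; one must rather use a scale-invariant formulation (or equivalently rescale $\Pi^q-\mathrm{id}$ by the factor $\ep^{-2}$) so that the genuine smallness parameter is the ratio $\ep^{\mu-2}$. Once this normalization is in place, the classical Moser--Arnold measure estimate of the form $\sqrt{\eta/\tau}$ yields the exponent $\mu/2-1$ directly, and the remaining verifications (exact area preservation after the $\cO(\ep)$ coordinate change, existence of a continuous ``approximate action'' $I+\cO(\ep^\mu)$ whose level sets carry the tori, and the selection of Diophantine frequencies relative to the resonant shift $2\pi p$) are standard.
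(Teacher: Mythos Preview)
Your proof is correct and follows essentially the same approach as the paper: compute $\hat\omega'(I)$, excise a small-measure set where the twist may degenerate, and apply Moser's invariant curve theorem to the area-preserving map $\Pi^q$ with effective smallness parameter $\ep^{\mu-2}$, yielding the $C\ep^{\mu/2-1}$ measure bound. The one imprecision is that ``continuity'' alone does not yield the almost-full-measure set $J$ (continuity at $I=0$ only gives a possibly tiny neighborhood); the paper instead uses that $\hat\omega'(I)/(q\ep^2)$ is a nonzero \emph{polynomial} in $I$, hence has finitely many zeros in $(0,1/2)$, whose $\delta''$-neighborhoods are excised---but since you already displayed the explicit polynomial expression, this is easily repaired.
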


\begin{proof}
By Proposition~\ref{P:area}, the Poincar\'e map~\eqref{eq:Poin:map} $\Pi:\DD_R\to \DD$, $R<1$, at the section ${\alpha=0}$, of the Beltrami field $v$, preserves an area measure. The same holds for the iterated map $\Pi^q$, which in action-angle coordinates $(I,\phi)$ reads as an $\cO(\ep^\mu)$ perturbation of an integrable twist map, c.f. Equation~\eqref{PoinAA}, with frequency function $\hat\omega(I)$.

Moser's twist condition reads as
\[
\hat\om'(I)=q\ep^2(\cA+\cO(I))\neq 0\,.
\]
If the quantity $\cA$ is nonzero, then being $\hat\om'(I)$ a polynomial, it vanishes in a finite set $\{I_1,\dots,I_L\}$ of points of the interval $(0,1/2)$. Take a real $\delta''>0$. For each $I$ in the complement
$$(0,1/2)\backslash \bigcup_{k=1}^L(I_k-\delta'',I_k+\delta'')\,,$$
the twist is then a nonzero constant of order $\ep^2$ bounded from below by a constant that depends on $\de''$, and since the iterated Poincar\'e map is an order $\cO(\ep^\mu)$
perturbation of an integrable twist map, it follows (see
e.g.~\cite{Siegel}) that for small enough $\ep$ all the disk $\DD_R$ but a
set of measure at most $C\de''+C\ep^{\frac\mu2-1}$ is covered by quasi-periodic invariant
curves of the iterated Poincar\'e map, which in turn yields quasi-periodic invariant curves for the Poincar\'e map $\Pi$. The constant $C$ depends on $\de''$ but not on $\ep$. In terms of the Beltrami field~$v$, this obviously means that the whole tube $\es\times\DD$ but a set of measure at most
$\de'+C\ep^{\frac\mu2-1}$ is covered by ergodic invariant tori of~$v$, as claimed, where we are setting $\delta':=C\delta''$.
\end{proof}

\begin{remark}
  With the same proof, but with a more thorough bookkeeping,
  one can in fact 
{assume}
a slightly more general twist
  condition. Namely, the theorem remains true under the weaker
  assumptions that the coefficients $(\cA_n)_{n\geq
3}$ are not all zero, where
\begin{align*}
  \cA_n:=
  \begin{cases}
     \int_0^\ell \tau(\al)\Big(\frac{\kappa(\al)^2}{16} +48\Gamma_3(\al)^2\Big)\,
d\al & \text{if } n=3\,, \\[2mm]
 \int_0^\ell \tau(\al) \Gamma_n(\al)^2 d\al & \text{if } n\neq3\,.
\end{cases}
\end{align*}
\end{remark}

\section{Proof of the main theorem}
\label{S.Proof}

In this section we prove the following theorem. The fact that this result
implies Theorem~\ref{T.main} from the Introduction follows from two observations. First, any collection of tubes $\cT_1,\dots,\cT_N$ is isotopic to a collection of thin tubes $\cT_\ep(\gamma_1),\dots,\cT_\ep(\gamma_N)$ for any small enough $\ep$, where $\gamma_j$ is a representative curve of the core knot of $\cT_j$. Second, if $u$ is a Beltrami field in $\RR^3$ satisfying $\curl u=\lambda u$, then the rescaled field $u'(x):=u(\la^{-1}\la'x)$ is a Beltrami field satisfying $\curl u'=\la'u'$.

\begin{theorem}\label{T.Acta2}
Let $\gamma_1, \ldots, \gamma_N$ be pairwise disjoint (possibly knotted and
linked) closed curves in $\RR^3$.
For small enough $\ep$, for any $\de'>0$ and for any nonzero constant $\la=\cO(\ep^3)$, one can transform the
collection of pairwise disjoint thin tubes $\cT_\ep(\gamma_1), \ldots,
\cT_\ep(\gamma_N)$ by a diffeomorphism $\Phi$ of $\RR^3$, arbitrarily close to
the identity in the $C^{2,\beta}$ norm ($\beta<1$), so that $\Phi[\cT_\ep(\gamma_1)], \ldots,
\Phi[\cT_\ep(\gamma_N)]$ are invariant solid tori of a Beltrami field
satisfying the equation $\curl u = \lambda u$ in $\RR^3$ and the decay
condition $|u(x)|<\frac C{1+|x|}$. Moreover, the interior of each $\Phi[\cT_\ep(\gamma_j)]$ contains a set of
ergodic invariant tori~$\cI_j$, of measure greater than $(1-\de'-C\ep^{1/4}) |\Phi[\cT_\ep(\ga_j)]|$, and at least $M$ hyperbolic periodic orbits that are isotopic to each other and cablings of the curve~$\gamma_j$.
\end{theorem}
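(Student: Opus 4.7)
The plan is to combine the local Beltrami constructions of Sections~\ref{S.Beltramis}--\ref{S.trajectories} with the global approximation scheme used to prove Theorem~\ref{T.Acta}. First, by a $C^\infty$-small isotopy I would modify each core curve $\gamma_j$ so that its total torsion becomes a common rational multiple of $2\pi$, $T_0 = 2\pi p/q$, and so that its curvature is strictly positive; density of the rationals makes this cost-free for the isotopy class. On each thin tube $\cT_\ep(\gamma_j)$ I would then set up boundary data $f^{(j)} = \sum_{n\in\cN} f_n^{(j)} + \hat f^{(j)}$ as in Subsection~\ref{S.normal}, selecting the functions $\Ga_n$ so that they satisfy the integral constraints~\eqref{SCond1:1}--\eqref{SCond3}. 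Because these are finitely many linear conditions in an infinite-dimensional space, and $\Ga_3$ can be chosen so that the twist coefficient $\cA\neq 0$, the local Beltrami field $v^{(j)}$ produced by Subsection~\ref{ssec:lBF} satisfies the hypotheses of the KAM theorem~\ref{T:KAM}.

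The second step is to design the correction $\hat f^{(j)}$ of size $\cO(\ep^{2+\mu})$ so that the iterated Poincar\'e map~\eqref{PoinAA} admits at least $2M$ transverse zeros of a suitable subharmonic Melnikov function. I would fix $\mu = 5/2$, which yields the announced measure bound $\ep^{\mu/2 - 1} = \ep^{1/4}$, and evaluate the Melnikov function at a resonant action $I_\ast$ with $\hat\om(I_\ast) = 2\pi p$; it reduces to an explicit integral of the leading correction $(v_r^{(\mu)},v_\te^{(\mu)},v_\al^{(\mu)})$ along the unperturbed resonant torus. This is the main obstacle: one must verify that the PDE~\eqref{curlv} still leaves enough flexibility in $\hat f$ to realize, at leading order, a trigonometric polynomial in $\phi$ with at least $2M$ simple zeros. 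The estimates of Subsection~\ref{ssec:lBF} show that each Fourier mode $e^{in\te}$ with $|n|\ge 2$ in $\hat f$ produces a nontrivial corresponding mode in $\nabla\hat\psi$, so a finite linear combination of such modes with suitable $\al$-dependent coefficients exhausts, up to lower order terms, the angular Fourier content of the Melnikov integral; a generic choice of coefficients then yields simple zeros of the prescribed number, and transversality of each zero identifies it with a hyperbolic periodic point via the standard subharmonic Melnikov theorem.

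The third step is to pass from the local fields $v^{(j)}$ to a genuine global Beltrami field $u$ in $\RR^3$ with sharp decay. This I would accomplish by the Runge-type approximation argument of Theorem~\ref{T.Acta}: the union of the thin tubes is of Runge type for the Beltrami operator $\curl - \la$, so one can construct a global $u$ satisfying $\curl u = \la u$ in $\RR^3$, the decay $|u(x)| < C/(1+|x|)$, and whose restriction to each tube is $C^{2,\beta}$-close to $v^{(j)}$. Since the boundaries $\pd\cT_\ep(\gamma_j)$ are now only approximately invariant under $u$, I would invoke the structural stability of the local quasiperiodic torus (inherited from Theorem~\ref{T.Acta}) to produce a $C^{2,\beta}$-small diffeomorphism $\Phi$ of $\RR^3$ mapping each tube onto a genuinely invariant solid torus $\Phi[\cT_\ep(\gamma_j)]$.

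Within each such torus, Theorem~\ref{T:KAM} provides the ergodic invariant tori with measure at least $(1-\de'-C\ep^{1/4})|\Phi[\cT_\ep(\gamma_j)]|$, and the Melnikov analysis produces the $M$ hyperbolic periodic orbits. Each such orbit is $q$-periodic for $\Pi$ and advances $p$ times meridionally per $q$ trips along the tube, so it is a $(p,q)$-cabling of $\gamma_j$; and any two of them are pairwise isotopic because they bifurcate from the same resonant invariant curve of $\Pi^q$. This yields the desired conclusion.
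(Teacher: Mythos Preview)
Your overall architecture matches the paper's, but there is a genuine gap in the first step that you gloss over, and it is precisely the place where the paper works hardest.

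You write that the constraints~\eqref{SCond1:1}--\eqref{SCond3} on the functions~$\Gamma_n$ are ``finitely many linear conditions in an infinite-dimensional space''. This is false: conditions~\eqref{SCond2}--\eqref{SCond3} are \emph{bilinear} in the $\Gamma_j$, and in particular the diagonal case $j=n$ of~\eqref{SCond2} is equivalent (after integration by parts) to the quadratic constraint $\int_0^{q\ell}\tau\,\Gamma_n^2\,R_{2n}=0$, which cuts out a cone, not a linear subspace. The paper handles the off-diagonal bilinear terms by choosing the $\Gamma_n$ with pairwise disjoint supports, so that $\Gamma_j\Gamma_n\equiv 0$ for $j\neq n$; but the diagonal quadratic constraint cannot be satisfied for a generic curve~$\gamma$. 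To make it solvable the paper \emph{modifies the curve itself} by superposing a rapidly rotating helicoid of amplitude~$\eta^3$ and frequency~$\eta^{-1}$, so that the torsion $\tau$ changes sign roughly $\eta^{-1}$ times on each support interval~$K_n$. This perturbation is $C^{2,\beta}$-small for $\beta<1$ but \emph{not} $C^3$-small, and is exactly the reason the diffeomorphism~$\Phi$ in the statement is only controlled in the $C^{2,\beta}$ norm. Your ``$C^\infty$-small isotopy'' cannot achieve this, and without it you have no functions~$\Gamma_n$ satisfying all the required identities. You also need $\int_0^\ell \tau\Gamma_n^2\neq 0$ simultaneously with $\int_0^{q\ell}\tau\Gamma_n^2 R_{2n}=0$, which again relies on the sign changes of~$\tau$.

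A secondary point: your Melnikov step differs from the paper's. You propose to work at a \emph{single} resonant action~$I_\ast$ and produce $2M$ simple zeros of the Melnikov function there, using modes $|n|\geq 2$ in~$\hat f$. The paper instead tunes the coefficients of the~$\Gamma_n$ so that the polynomial $\hat\omega(I)-2\pi p$ has $M$ prescribed nondegenerate zeros $I_1,\dots,I_M$ in $(0,\tfrac12)$, and then takes $\hat f = b(\al)\,\ep^{2+\mu}\cos\theta$ (the $n=1$ mode, via Remark~\ref{eq:explicit:n1}); the resulting Melnikov function is \emph{independent of~$I_k$} and has two simple zeros, yielding one hyperbolic orbit near each of the $M$ resonant tori. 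Your variant is in principle workable, but you must still verify that at least one resonant action exists---i.e., that $\hat\omega(I)-2\pi p$ has a nondegenerate zero in $(0,\tfrac12)$---and this again requires the tuning of $\int_0^\ell\tau\Gamma_n^2$ that depends on the curve modification above.
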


\begin{proof}
  The proof is divided in four steps:

  \subsubsection*{Step 1: Existence of the curve and the normal data}

For the ease of notation, we will
drop the subscript~$j$ until we consider all the tubes simultaneously
in the fourth step of the proof. Consider each smooth curve $\ga\equiv
\ga_j$ and the tube $\cT_\ep\equiv \cT_\ep(\ga)$ as
defined in Subsection~\ref{S.Belt1}.
We shall assume that~$\ga$ satisfies the condition
\begin{equation}\label{hypothesis1}
\int_0^\ell \kappa(\al)^2 \tau(\al) \, d\al\neq0\,,
\end{equation}
and the resonant condition~\eqref{eq:T0pq}, which hold for a dense subset of smooth curves in
the $C^k$~topology, with $k\geq3$ (see~\cite[Lemma 7.9]{Acta}).

Now we want to show that there exist $\ell$-periodic functions $\Ga_n$, $n\in\cN$, introduced in Subsection~\ref{S:KAM:case2}, which satisfy the integral relations~\eqref{SCond1:1}--\eqref{SCond3}. To this end, we consider the (finite-dimensional) vector space $\mathcal S$ spanned by linear combinations (with real coefficients) of the finite set of functions
\begin{equation}\label{Eq.finite}
\bigcup_{n\in\cN}\{\ka\tau R_{n+1},\ka\tau R_{n-1}, \ka' R_{n+1},\ka' R_{n-1}\}\,.
\end{equation}
(The functions $R_j$ were defined in Subsection~\ref{S:KAM:case2}). Then we can take smooth functions $\Ga_n$, $n\in\cN$, with disjoint supports, and in the (infinite-dimensional) $L^2$-orthogonal complement of $\mathcal S$. An easy computation shows that these functions satisfy all the conditions~\eqref{SCond1:1}--\eqref{SCond3} except for the condition in~\eqref{SCond2} given by
\[
\int_0^{q\ell}\Ga_n\Ga_n'R_{2n}=0\,,
\]
which is readily seen to be equivalent (integrating by parts) to
\begin{equation}\label{Eqtau}
\int_0^{q\ell}\tau\Ga_n^2R_{2n}=0\,.
\end{equation}

To ensure the existence of $\Ga_n$ satisfying this assumption, we construct a curve $\ga_1$ which is a small perturbation of $\gamma$, whose torsion changes sign many times on some a priori prescribed intervals of $\SS^1_\ell$ (which in turn will be the supports of the functions $\Ga_n$). Indeed,
let $f_n(\al)$ be a smooth function that is supported in an
interval $K_n$ of the circle
$\SS^1_\ell$, $n\in\cN$, and take a small but fixed constant~$\eta$. The intervals $K_n$ are taken pairwise disjoint. We now transform the
curve~$\ga$ by adding a rapidly-rotating small-amplitude helicoid
supported in the above portion of the circle chosen so that the
perturbation is $C^{2,\be}$-small for all $\be<1$ but not $C^3$-small:
\[
\ga_1(\al):=\ga(\al)- \eta^3\, f_n(\al)\, \bigg( e_1(\al)\,\cos
\frac\al\eta + e_2(\al)\,\sin \frac\al\eta\bigg)\,.
\]
We recall that $\al$ is an arc-length parametrization of~$\ga$ and $e_1(\al)$, $e_2(\al)$ are the normal and binormal
vector fields (see Subsection~\ref{S.Belt1}) on the curve~$\ga$. Notice that~$\al$ is almost an
arc-length parametrization of~$\ga_1$ in the sense that
\[
\bigg|\frac{d\ga_1}{d\al}\bigg|^2=1+\cO_0(\eta^2)\,.
\]
This enables us to carry out
computations in the parameter~$\al$ as if it were an arc-length
parameter of~$\ga_1$, up to errors of order~$\eta^2$. We will not mention this explicitly in what follows for the
ease of notation. In this section, we shall denote by $\cO_0(\eta^j)$ any quantity $q(\al,y)$
defined on $\es\times\DD$ such that
\[
\|q\|_{C^0(\es\times\DD)}\leq C\eta^j\,,
\]
where the constant $C$ does not depend on~$\eta$. Note that we are not
making any assumptions about the size of the derivatives of~$q$.

It is easy to check that
\begin{align*}
\ga_1'(\al)&= \ga'(\al) + \cO_0(\eta^2)\,,\\
\ga_1''(\al)&= \ga''(\al)+\eta f_n(\al)\, \bigg( e_1(\al)\,\cos
\frac\al\eta + e_2(\al)\,\sin \frac\al\eta\bigg)+\cO_0(\eta^2)\,,\\
\ga_1'''(\al)&= \ga'''(\al)-f_n(\al)\, \bigg( e_1(\al)\,\sin
\frac\al\eta - e_2(\al)\,\cos \frac\al\eta\bigg)+\cO_0(\eta)\,.
\end{align*}
Using the formulas for the Frenet frame, this shows that the curvature
and torsion of the curve~$\ga_1$ are
\begin{align*}
\ka_1(\al)&=\frac{|\ga_1'\times\ga_1''|}{|\ga_1'|^3}=
            \ka(\al)+\cO(\eta)\,,\\
 \tau_1(\al)&=\frac{\ga_1'\times\ga_1''\cdot\ga_1'''}{|\ga_1'\times\ga_1''|^2}=\tau(\alpha)+\frac{f_n(\al)}{\ka(\al)}\cos\frac\al\eta
 +\cO(\eta)\,.
\end{align*}
Notice that $\ka_1>0$
everywhere for small enough~$\eta$.

It is then clear that one can choose an $\eta$-independent $f_n$  so
that $\tau_1$ changes sign $\lfloor c\eta^{-1}\rfloor$ times in the
interval $K_n$, where~$c$ is a uniform constant. (For example, one can take $f_n:=2\tau\ka \chi_n$,
where $\chi_n$ is a function supported on $K_n$ which is equal to~1 in an open interval of $K_n$.) One can then take the curve
$$
\ga(\al)- \eta^3\, \sum_{n\in\cN} f_n(\al)\, \bigg( e_1(\al)\,\cos
\frac\al\eta + e_2(\al)\,\sin \frac\al\eta\bigg)\,,
$$
which is a $C^{2,\beta}$-small perturbation of $\ga$ in each $K_n$. The condition~\eqref{hypothesis1} is still satisfied because
\begin{align*}
 \int_0^\ell \ka_1(\al)^2\,\tau_1(\al)\,
  \bigg|\frac{d\ga_1}{d\al}\bigg|\, d\al
&=\int_0^\ell
  \bigg( \ka(\al)^2\tau(\alpha)+\ka(\alpha) \sum_{n\in\cN}f_n(\al)\,\cos\frac\al\eta\bigg)\,
  d\al+\cO(\eta)\\
&=\int_0^\ell \ka(\al)^2\tau(\alpha)\, d\al+\cO(\eta)\,,
\end{align*}
which is nonzero for all small
enough~$\eta$. Here we have used that the integral of any smooth
function~$F$ satisfies $|\int_0^\ell F(\al)\,
\cos\frac\al\eta\, d\al|< C_N\eta^N$ for any~$N$.

For simplicity, in what follows we shall still call $\ga$ this new curve. We can also safely assume that the resonant condition~\eqref{eq:T0pq} holds (for $p$ and $q$ different from those of the original curve $\gamma$), because this condition is satisfied for a dense subset of smooth curves in
the $C^k$~topology, with $k\geq3$ (notice that a $C^3$-small deformation of the curve does not affect the previous properties).

Therefore, we can take the functions $\Gamma_n$, $n\in\cN$, supported
in the intervals $K_n$, and in the $L^2$-orthogonal complement of the
finite vector space $\cS$ defined in~\eqref{Eq.finite}. Since the
torsion $\tau$ changes sign $\lfloor c\eta^{-1}\rfloor$ times in each
support of $\Gamma_n$, it is easy to check that $\Gamma_n$ can be
chosen so that the condition~\eqref{Eqtau} is satisfied. This
completes the proof of the existence of the functions $\Gamma_n$
satisfying all the integral
relations~\eqref{SCond1:1}--\eqref{SCond3}.

\subsubsection*{Step 2: The resonant tori of the local Beltrami field}

Our next goal is to show that, given
{$M$}
points $0<I_1<I_2<\cdots <
I_M<\frac12$, one can choose the set~$\cN$ and the functions~$\Ga_n$
($n\in\cN$) such that the frequency function
\[
\hat\omega(I) = 2\pi p - q\ep^2 \left( \frac{6-I}{16} \int_0^{\ell} \tau \kappa^2 - \sum_{n \in \cN} 2^n
n(n-1) I^{n-2} \int_0^{\ell} \tau \Gamma_n^2 \right)\,,
\]
defined in~\eqref{eq:trunc:freq}, {satisfies
\[
\hat \omega(I_k)= 2\pi p
\qquad
\mbox{and}
\qquad
\hat \omega'(I_k) \neq 0
\]
for all $1\leq k\leq M$}.
By construction, this means that the curves
$\{I=I_k\}$ are approximate resonant curves of the Poincar\'e map~$\Pi$ consisting
of points that are $q$-periodic modulo an error of
size~$\cO(\ep^\mu)$. This gives approximate resonant invariant tori of the Beltrami field $v$.

Indeed, it is not hard to see that, as $\tau$ and $\tau R_{2n}$ are
not proportional, one can choose the functions~$\Ga_n$ as in Step~1 (which
satisfied $\int_0^\ell \tau \Ga_n^2 R_{2n}=0$) so that
\[
\int_0^\ell \tau \Ga_n^2\neq0\,.
\]
If we now take a set~$\cN$ of large enough cardinality and notice that
all the above conditions still hold if
one {replaces} $\Ga_n$ by $c_n\Ga_n$, where $c_n$ is any real
constant, it becomes apparent that one can indeed prescribe as many nondegenerate
zeros of the polynomial $\hat\om(I)-2\pi p$ as one wishes.

\subsubsection*{Step 3: A subharmonic Melnikov method to create hyperbolic periodic orbits}

The goal of this step is to select a suitable perturbation of size $\cO(\ep^\mu)$ in
order to create hyperbolic periodic orbits in the interior of $\cT_\epsilon$. To this end, we will
resort to a subharmonic Melnikov criterion adapted to the small twist and the
perturbation size.

Consider the approximate resonant tori $\{I=I_k\}$ constructed in Step~2. We can express
the frequency function $\hat \omega(I)$,  in a
neighborhood of~$\{I=I_k\}$, as
\begin{equation}\label{eq:taylor:om}
\begin{split}
\hat \om(I) = {} &  2 \pi p + \hat \om'(I_k)(I-I_k) + \cO(\ep^2 (I-I_k)^2))\,,
\end{split}
\end{equation}
and we will be concerned with values of~$I$ such that
\begin{equation}\label{eq:ansatzI}
I-I_k = \cO(\ep^\beta)\,,
\end{equation}
where $0<\beta<1$ is a coefficient that will be specified later.

The following result gives an explicit sufficient condition for the existence
of hyperbolic periodic orbits of the vector field~$v$.

\begin{theorem}\label{T:subh:Melnikov}
Let $\{I=I_k\}$ be the approximate resonant invariant torus of $v$ constructed above.
Suppose that the subharmonic Melnikov
function that we define as
\begin{equation}\label{eq:sub:Mel2}
M^{q/p}(\phi):=
\int_0^{q \ell} v^{(\mu)}_r(s,\sqrt{2I_k},\phi+T(s)) \, ds
\end{equation}
has $j$ zeros in $\phi \in [0,2\pi)$, all of which are
non-degenerate. Then
for small enough~$\ep$, the Beltrami field~$v$ has $j$ periodic orbits
contained in a small neighborhood of the tori $\{I=I_k\}$. These orbits
wind around the core knot of the torus $q$~times in the
coordinate~$\al$ and $p$~times in the coordinate~$\theta$. Moveover,
$j$ is even and
half of these periodic orbits are hyperbolic and the other half are elliptic.
\end{theorem}

\begin{proof}
The proof is independent of the choice of $I_k$. For simplicity, let
us set
\[
A:=\ep^{-2}\hat \om'(I_k)\neq 0\,.
\]
Our goal is to show that if
the function $M^{q/p}(\phi)$ has $j$ non-degenerate zeros
$\{\phi^1,\dots, \phi^j\}\subset [0,2\pi)$,
then there are exactly
$j$ periodic orbits $\{(\al_i(s),I_i(s),\phi_i(s))\}_{i=1}^j$ of period~$q\ell$
contained in a toroidal annulus of the form
$$
|I-I_k|<C(\ep^{\mu-2}+\ep^{3-\mu}) <C\ep^\be\,,
$$
where $\be:=\min\{\mu-2,3-\mu\}>0$. The best possible choice
is~$\mu:=5/2$, so the reader may assume without loss of generality
that $\be=1/2$.
Furthermore, these periodic orbits pass
close to the zeros of the Melnikov function in the sense that
\[
\min_{0\leq s\leq q\ell} |\phi_i(s)-\phi^i|< C\ep^\be\,.
\]

To prove this, notice that the expression~\eqref{PoinAA} for the
iterated Poincar\'e map $\Pi^q$ is
\[
\Pi^q(I,\phi)=\big(I+\ep^\mu F(I,\phi)+\cO(\ep^3),\,
\phi+\hat \om(I)+ \ep^\mu G(I,\phi)+\cO(\ep^3)\big)
\]
with
\begin{align*}
F(I,\phi)&:= \sqrt{2 I}\int_0^{q \ell}
v^{(\mu)}_r(s,\sqrt{2I},\phi+T(s)) \, ds\,,\\
G(I,\phi)&:= \int_0^{q \ell}
(v^{(\mu)}_\te(s,\sqrt{2I},\phi+T(s))
+ \tau(s) v^{(\mu)}_\al(s,\sqrt{2I},\phi+T(s)))\, ds\,.
\end{align*}

Characterizing
the periodic trajectories of period $q\ell$ is
equivalent to identifying the fixed points of the iterated Poincar\'e
map $\Pi^q$. Hence, denoting the $(I,\phi)$ components of $\Pi^q$ by
$(\Pi^q_I,\Pi^q_\phi)$, we will look for zeros of the function
\[
\cR(I,\phi):=\big({\Pi^q_I(I,\phi)-I}, {\Pi^q_\phi(I,\phi)-\phi}\big)\,.
\]
Of course, as the angle $\phi$ takes values in $\SS^1$, the second
component of the above map must be understood modulo $2\pi$. Using the
expressions~\eqref{eq:taylor:om}
and~\eqref{eq:ansatzI}, we write
\begin{align}\label{cR}
\cR(I,\phi):=\big( \ep^\mu F(I,\phi)+\cO(\ep^3),\, \ep^2 A\,
  (I-I_k)+\ep^\mu G(I,\phi)+\cO(\ep^3 + \ep^{2+2\beta})\big)\,.
\end{align}

Before discussing the zeros of this map, let us derive
estimates for the various functions that appear in its definition. An
obvious estimate is that
\[
G(I,\phi)=\cO(1)\,,
\]
which implies that the leading term in the second component of the
map is simply $\ep^2 A(I-I_k)$.
On the other hand, the subharmonic Melnikov function dominates the $I$ component:
\[
F(I,\phi) = F(I_k,\phi) + \cO(\ep^\beta) =\sqrt{2I_k} M^{q/p}(\phi) + \cO(\ep^\beta)\,.
\]

Let us suppose that $(\widetilde I_k,\widetilde \phi)$ is a solution to the equation
$\cR(\widetilde I_k,\widetilde \phi)=0$ with $\widetilde I_k$ close to~$I_k$.
Since the leading term of the
$I$-component of the function $\cR$ is $\ep^\mu \sqrt{2 I_k} M^{q/p}(\phi)$,
the angle $\tilde\phi$ must be close to one of the zeros
$\phi^1,\dots,\phi^j$ of $M^{q/p}$.

The zeros of $\cR$ obviously coincide with those of
\[
\widetilde\cR(I,\phi):=\bigg(\frac{\cR_I(I,\phi)}{\ep^\mu},\frac{\cR_\phi(I,\phi)}{\ep^2}\bigg)\,,
\]
so let us now apply the implicit function theorem to
$\widetilde\cR(I,\phi)$ around the point $(I_k,\phi^i)$. Observe that
\[
\widetilde\cR(I_k,\phi^i)=\cO(\ep^\beta)
\]
and
\[
\det D\widetilde\cR(I_k,\phi^i)
= -A \sqrt{2 I_k} \, (M^{q/p})'(\phi^i)+\cO(\ep^\beta)\,,
\]
which is nonzero for small $\ep$ because $\phi^i$ is a non-degenerate
zero of the Melnikov function. The implicit function theorem then
guarantees the existence of a unique zero of $\widetilde\cR$ in a
neighborhood of $(I_k,\phi^i)$ of radius
$\cO(\ep^{\beta})$. This shows that there are exactly
$j$~periodic orbits of period $q\ell$ in a small neighborhood of the
approximate resonant torus $\{I=I_k\}$.

Since the function $M^{q/p}(\phi)$ is periodic and
$(M^{q/p})'(\phi^i)$ is nonzero at the points where this function
vanishes, it follows from the Hopf index theorem that there must be the same number of zeros
where this derivative is positive as zeros where it is negative. Indeed, the
index of the zero $\phi^i$ is given by the sign of
$(M^{q/p})'(\phi^i)$ and the sum of the indices must be equal to the
Euler characteristic of the circle, which is zero.

To identify which kind of orbits we have, let us compute the
eigenvalues $\La_\pm$ of the derivative of $\Pi^q$ at the zeros
$(\widetilde I_k,\widetilde \phi^i)$
of~$\cR$. The equation for the eigenvalues is
\begingroup
\renewcommand*{\arraystretch}{1.5}
\begin{align*}
0&=\det (D\Pi^q(\widetilde I_k,\widetilde\phi^i)-\La \mathbf 1)\\
&= \det \left(\begin{matrix}
(1-\La)+\ep^\mu  \frac{\pd F}{\pd  I}+\cO(\ep^3)  &  \ep^\mu \frac{\pd F}{\pd\phi} +\cO(\ep^3)\\
q\, \hat\om'(\widetilde I_ k)+\ep^\mu \frac{\pd G}{\pd I}+\cO(\ep^3) & (1-\La) +\ep^\mu
\,\frac{\pd G}{\pd\phi} +\cO(\ep^3)
\end{matrix}\right)\\
&= \det \left(\begin{matrix}
(1-\La)+\ep^\mu  \frac{\pd F}{\pd  I}+\cO(\ep^3)  &  \ep^\mu \sqrt{2\widetilde I_k}
(M^{q/p})'(\widetilde \phi^i) +\cO(\ep^3+\ep^{\mu+\beta})\\
\ep^2\, A+\ep^\mu \frac{\pd G}{\pd I}+\cO(\ep^{2+\beta}) & (1-\La) +\ep^\mu
\,\frac{\pd G}{\pd\phi} +\cO(\ep^3)
\end{matrix}\right)\\
&= (1-\La)^2+(1-\La)\bigg[\ep^\mu\,\bigg(\frac{\pd F}{\pd I}+ \frac{\pd G}{\pd
  \phi}\bigg)+\cO(\ep^3)\bigg] - \ep^{2+\mu} A\sqrt{2\widetilde I_k}(M^{q/p})'(\widetilde \phi^i)
  \\&+\cO(\ep^{2\mu}+\ep^5)\,,
\end{align*}
\endgroup
where all the functions are evaluated at $(\widetilde I_k,\widetilde \phi^i)$.
This is a quadratic equation for $1-\La$, which can be solved using that $(\widetilde I_k,\widetilde \phi^i)=(I_k,\phi^i)+\cO(\ep^\beta)$ to
obtain that the roots are
\[
1-\La_\pm=\pm2\ep^{\frac\mu2+1}\sqrt{A\sqrt{2I_k} (M^{q/p})'(\phi^i)}\,\big(1+\cO(\ep^{\beta}))+\cO(\ep^\mu)\,,
\]
where the terms represented as $\cO(\ep^j)$ are all real. Hence, for
small enough~$\ep$, the
roots $\La_\pm$ have nonzero imaginary part if $A$ and
$(M^{q/p})'(\phi^i)$ have distinct sign and are real numbers with
$\La_-<1<\La_+$ otherwise. Since the Poincar\'e map is area-preserving
and there are $j/2$ zeros for which $(M^{q/p})'(\phi)$ is positive and
$j/2$ for which it is negative, we immediately infer that half of the
$j$ periodic trajectories are elliptic and half of them are hyperbolic
of saddle type.
\end{proof}

Let us now apply this theorem to the Beltrami field $v$ after taking a normal datum
of the form
\[
f = \sum_{n \in \cN} f_n + \hat f\,,
\]
where the functions $f_n$ have been fixed in Subsection~\ref{S:KAM:case2} and Step 2, and we take
\[
\hat f = b(\al) \ep^{2+\mu} \cos\te \,.
\]
It is straightforward to see that the subharmonic Melnikov function
associated to this field is
\[
M^{q/p}(\phi) = \int_0^{q \ell} b (s)\cos(\phi+T(s)) ds\,,
\]
which is independent of the value $I_k$ of the approximate resonant torus. Notice that the functions
$b(s) \cos(T(s))$ and $b(s) \sin(T(s))$ are $q\ell$-periodic, since we
assumed the torsion $\tau$ to
satisfy the resonant condition~\eqref{eq:T0pq}. Therefore, it is clear that we can
choose the function $b(s)$ so that $M^{q/p}$ has two zeros that are non-degenerate. According to Theorem~\ref{T:subh:Melnikov}, the Beltrami field $v$ has a hyperbolic periodic orbit which is a cabling of the core curve $\ga$ near each approximate resonant torus $\{I=I_k\}$ for all $1\leq k\leq M$; in the coordinates $(\alpha,\theta)$ these orbits are $(q,p)$-torus knots, so they are all isotopic to each other.
The fact that we can construct a perturbation whose corresponding Melnikov
function does not depend on $I$ is crucial to create hyperbolic periodic orbits for all the $I_k$
simultaneously.

\subsubsection*{Step 4: Globalization of the local construction}

Since one can safely multiply the functions $\Ga_n$ by a
constant, taking $c\,\Ga_n$ instead, it is clear that one can assume
that $\|\Ga_3\|_{C^0}$ is arbitrarily small. Taking (for instance)
$\mu=5/2$, this implies that one can apply Theorem~\ref{T:KAM} to obtain the estimate of the measure of the set of ergodic invariant tori of the field $v$ in $\cT_\ep$.

Let us denote by
\[
K:=\bigcup_{j=1}^N \cT_\ep(\ga_j)
\]
the union of the thin tubes, and call $\tv$ the vector field on~$K$
that we define on each tube $\cT_\ep(\ga_j)$ for $\ep$ small enough as~$v_j \equiv v$ in the
notation that we have used in Steps~1--3. This is a local Beltrami
field, meaning that it satisfies the equation
\[
\curl \tv= \la\tv
\]
in~$K$, for some $\la=\cO(\ep^3)$. To globalize this local Beltrami field, we can use the
approximation theorem proved in~\cite[Theorem 8.3]{Acta}:

\begin{theorem}
Let $\tv$ be a vector field that satisfies the Beltrami equation
\[
\curl \tv = \lambda \tv
\]
in a bounded set $K \subset \RR^3$ with smooth boundary, where $\lambda$ is a nonzero constant
and the complement $\RR^3\backslash K$ is connected. Then there is a global
Beltrami field $u$, satisfying the equation
\[
\curl u = \lambda u
\]
in $\RR^3$, which falls off at infinity as $|u(x)| < \frac C{1+|x|}$ and approximates
the field in the $C^k$ norm as
\[
\|u-v\|_{C^k(K')} < \delta_0\,.
\]
Here $\delta_0$ is any fixed positive constant and $K'$ is any set whose
closure is contained in~$K$.
\end{theorem}

Since the error $\delta_0$ in the global approximation and the thickness
$\ep$ of the tubes are independent parameters, taking
$\delta_0=\cO(\ep^3)$, all the previous considerations apply to the global
Beltrami field $u$ inside each tube $\cT_\ep(\gamma_j)$, and the main
theorem follows.
\end{proof}

\section{Beltrami fields on the torus}
\label{S.torus}

In view of applications, our objective in this short section is to
state a result for Beltrami fields on the flat torus
$\TT^3:=(\RR/2\pi\ZZ)^3$ that is analogous to our main result (Theorem~\ref{T.main}).

Before we can formulate the theorem, let us recall that a Beltrami field on the flat torus $\TT^3$ is an eigenfunction of the curl operator:
\begin{equation}\label{BeltramiT}
\curl U= J\, U\,.
\end{equation}
It is well known that there only exist nontrivial Beltrami fields
of frequency~$J$ when this number is of the form $ J=\pm|k|$, where
$k\in\ZZ^3$ is a 3-vector of integer components. To put it
differently, the spectrum of the curl operator is the set of numbers
of this form. Furthermore, the most general Beltrami field of
frequency~$ J$ is a vector-valued trigonometric polynomial of the form
\[
U=\sum_{|k|=\pm J}\Big(b_k \, \cos(k\cdot x)+\frac{b_k\times
  k} J\,\sin(k\cdot x)\Big)\,,
\]
where $b_k\in\RR^3$ are vectors orthogonal to~$k$: $k\cdot
b_{k}=0$.

The main result of this section can then be stated as follows:

\begin{theorem}\label{T.torus}
Let $\cT_1, \ldots, \cT_N$ be pairwise disjoint (possibly knotted and
linked) contractible tubes in $\TT^3$.
For any large enough odd integer~$J$ and any~$\de>0$ there exists a diffeomorphism
$\Phi$ of $\TT^3$ such that $\Phi(\cT_1), \ldots,
\Phi(\cT_N)$ are invariant solid tori of a Beltrami field
satisfying the equation $\curl U = JU$ in $\TT^3$. Moreover, the
interior of each tube $\Phi(\cT_j)$ contains a set of
ergodic invariant tori of measure greater than $(1-\de)
|\Phi(\cT_j)|$, and at least $M$ hyperbolic periodic orbits that are
isotopic to each other and cablings of the core curve of~$\cT_j$.
\end{theorem}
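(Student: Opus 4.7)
The plan is to reduce the torus statement to the already-established result on~$\RR^3$ (Theorem~\ref{T.Acta2}) by means of an inverse localization argument of the type used in~\cite{CMP}. The key observation is that after pulling a high-frequency Beltrami eigenfunction of the curl on~$\TT^3$ back through the dilation $x\mapsto p+x/J$, one obtains, in a ball of fixed radius, a divergence-free field which is an arbitrarily good approximation (in $C^k$ norm) of a Beltrami field of eigenvalue~$1$ on~$\RR^3$.

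First, I would exploit the contractibility of the tubes $\cT_1,\dots,\cT_N$ to realize them all inside a small coordinate ball of~$\TT^3$. Since each $\cT_j$ is contractible, the collection $\cT_1\cup\dots\cup\cT_N$ is isotopic in~$\TT^3$ to a configuration of pairwise disjoint tubes contained in an arbitrarily small ball $B_{R/J}(p)\subset\TT^3$, identified with the Euclidean ball $B_{R/J}(p)\subset\RR^3$ in local coordinates. Denote by $\cT_1',\dots,\cT_N'$ the rescaled tubes $\cT_j':=J(\cT_j-p)\subset B_R\subset\RR^3$, which are again a pairwise disjoint collection of tubes with the same knot and link type as the original ones.

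Second, I would apply Theorem~\ref{T.Acta2} to $\cT_1',\dots,\cT_N'$ with the eigenvalue taken to be $\lambda=1$. This yields a Beltrami field~$u$ in~$\RR^3$ with $\curl u=u$, decaying as $|u(x)|<C/(1+|x|)$, and a diffeomorphism~$\Phi'$ of~$\RR^3$, arbitrarily close to the identity in $C^{2,\beta}$, such that $\Phi'(\cT_j')$ is an invariant torus of~$u$ enclosing an almost full measure set of KAM invariant tori and at least $M$ hyperbolic periodic orbits isotopic to cablings of the core curve of~$\cT_j'$.

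Third, I would invoke the inverse localization principle for Beltrami fields on~$\TT^3$: for any $R>0$, any $\delta_1>0$, any integer~$k$, and any point $p\in\TT^3$, there exists $J_0$ such that for every odd integer~$J\geq J_0$ one can find a Beltrami field~$U$ on~$\TT^3$ with $\curl U=JU$ satisfying
\[
\bigl\|\,U(p+y/J)-u(y)\,\bigr\|_{C^k(B_R)}<\delta_1\,.
\]
The assumption that~$J$ is a large odd integer guarantees, via standard lattice-point considerations, that there are sufficiently many integer vectors $k\in\ZZ^3$ with $|k|=J$ so that the span of the associated Beltrami modes is dense (in~$C^k$ on compacts) in the space of $\RR^3$~Beltrami fields of eigenvalue~$1$.

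Finally, I would conclude by structural stability. Pulling the identity $\curl u=u$ and $\curl U=JU$ through the dilation $x=p+y/J$ shows that the rescaled field $\tilde U(y):=U(p+y/J)$ is a divergence-free vector field on~$B_R$ which, by the previous step, is $C^{3,\beta}$-close to~$u$. The invariant tori $\Phi'(\pd\cT_j')$ produced in the $\RR^3$ construction are structurally stable under small $C^{3,\beta}$ divergence-free perturbations, and the hyperbolic periodic orbits enclosed by them persist (with their isotopy class and hyperbolic character preserved) by the standard persistence theorem for normally hyperbolic invariant sets. KAM tori of sufficiently Diophantine frequency are likewise preserved, so the measure estimate passes to~$\tilde U$ with an arbitrarily small loss. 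Undoing the dilation and composing with~$\Phi'$ yields a diffeomorphism~$\Phi$ of~$\TT^3$ with the required properties.

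The main obstacle is the inverse localization step: one must guarantee not only $C^0$ closeness but $C^{3,\beta}$ closeness on the ball $B_R$, and then check that the resulting perturbation size in the rescaled coordinates (which gains factors of~$J$ under differentiation) is still compatible with the structural stability threshold for the KAM tori and hyperbolic orbits of~$u$. This is handled by choosing~$\delta_1$ small depending on the stability margins provided by Theorem~\ref{T.Acta2} and then~$J$ large enough that the inverse localization principle delivers such a~$\delta_1$.
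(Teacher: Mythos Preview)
Your proof is correct and follows essentially the same route as the paper: apply Theorem~\ref{T.Acta2} (after the rescaling remark preceding it, so that one may take $\lambda=1$, since as stated Theorem~\ref{T.Acta2} only delivers $\lambda=\cO(\ep^3)$) and then invoke the inverse localization theorem, which the paper quotes as Theorem~\ref{T.approx} from~\cite{torus} rather than~\cite{CMP}. Your last-paragraph worry about factors of~$J$ is unfounded, since Theorem~\ref{T.approx} already provides $C^m$ closeness in the rescaled variable $y\mapsto \lambda y/J$ for any~$m$, so the structural-stability threshold is met simply by choosing $\ep'$ small.
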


Theorem~\ref{T.torus} is an immediate consequence of Theorem~\ref{T.Acta2} and of the inverse localization theorem
proved in~\cite[Theorem 2.1]{torus}:

\begin{theorem}\label{T.approx}
Let $u$ be a Beltrami field in $\RR^3$, satisfying
$\curl u=\la u$. Let us fix
any positive numbers $\ep'$ and $m$ and any bounded set $B\subset\RR^3$. Then for any large enough odd integer $J$ there is a
Beltrami field~$U$ on the torus, satisfying $\curl U=J U$ in $\TT^3$, such that
\begin{equation*}
\bigg\|U\bigg(\frac{\la}{ J}\;\cdot \bigg)-u\bigg\|_{C^m(B)}<\ep'\,.
\end{equation*}
\end{theorem}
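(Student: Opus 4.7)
The plan is to prove the inverse localization theorem by writing $u$ as a superposition of Beltrami plane waves, discretizing this superposition to a finite sum, and then replacing each wavevector by a nearby integer lattice vector of length~$J$ to obtain a genuine Beltrami field on~$\TT^3$.

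First, I would establish a Herglotz-type representation of~$u$ on a ball $B'\supset B$. Since any divergence-free Beltrami field with $\curl u=\la u$ solves the Helmholtz equation $\Delta u+\la^2 u=0$ componentwise, one can expand $u$ in vector spherical harmonics on $B'$ and, by keeping finitely many modes, write
\[
u(x)=\sum_{j=1}^{L}\bigl(e^{i k_j\cdot x}\,w_j+\text{c.c.}\bigr)+R_L(x),
\]
where $k_j\in\SS^2_\la=\{k\in\RR^3:|k|=\la\}$, the polarization $w_j\in\CC^3$ satisfies $k_j\cdot w_j=0$ and $k_j\times w_j=-i\la\,w_j$ (so that each plane wave itself is a Beltrami field of eigenvalue~$\la$), and the remainder obeys $\|R_L\|_{C^m(B)}<\ep'/3$ once $L$ is large enough. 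The control of $R_L$ follows from standard elliptic regularity for the Helmholtz equation together with the decay of the spherical-harmonic coefficients of an analytic function.

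Second, I would transfer this finite plane-wave decomposition to the torus by the rescaling $y=(\la/J)x$. A plane wave $e^{i k\cdot y}w$ with $k\in\ZZ^3$ and $|k|=J$, $k\cdot w=0$, $k\times w=-i J\,w$ is $2\pi$-periodic and satisfies $\curl=J$ on~$\TT^3$. Hence I need to pick, for each of the continuous wavevectors $k_j\in\SS^2_\la$, a lattice vector $k_j^\star\in\mathcal Z_J:=\{\xi\in\ZZ^3:|\xi|=J\}$ such that $(\la/J)k_j^\star$ is close to~$k_j$, and then adjust the polarization by the linear-algebra relations $k_j^\star\cdot w_j^\star=0$, $k_j^\star\times w_j^\star=-iJ\,w_j^\star$ with $w_j^\star$ close to~$w_j$. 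The candidate approximant is
\[
U(y):=\sum_{j=1}^{L}\bigl(e^{i k_j^\star\cdot y}\,w_j^\star+\text{c.c.}\bigr),
\]
which is a real trigonometric polynomial on~$\TT^3$ with $\curl U=JU$.

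Third, I would estimate the error $\|U(\la x/J)-u(x)\|_{C^m(B)}$. The remainder $R_L$ contributes at most $\ep'/3$. The plane-wave mismatch from moving $k_j\to(\la/J)k_j^\star$ and $w_j\to w_j^\star$ can be bounded uniformly on the bounded set~$B$ by a multiple of $\max_j\bigl(|(\la/J)k_j^\star-k_j|+|w_j^\star-w_j|\bigr)$, with a constant that depends on~$L$, $m$, and $B$ but not on~$J$. Both are controlled by the same discrepancy between $\mathcal Z_J/J$ and the unit sphere.

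The main obstacle is the number-theoretic step. One needs, for any odd integer $J$ sufficiently large and any finite collection of prescribed directions on $\SS^2$, to find lattice points of $\mathcal Z_J$ whose normalizations $k^\star/J$ approximate those directions. The oddness of~$J$ ensures $J^2$ is not in the excluded residue class $4^a(8b+7)$ of Legendre's three-square theorem, so $\mathcal Z_J\neq\emptyset$. The required quantitative equidistribution of $\mathcal Z_J/J$ on $\SS^2$ as $J$ ranges over odd integers follows from Duke's theorem (building on Iwaniec's bounds for Fourier coefficients of half-integral weight modular forms) on the equidistribution of integral points on spheres; this yields a rate $|k_j^\star/J-k_j/\la|=O(J^{-\gamma})$ uniformly in~$j$. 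Once this analytic-number-theoretic input is secured, all remaining estimates are elementary: choosing $J$ large enough makes the two non-truncation errors each smaller than $\ep'/3$, and the theorem follows.
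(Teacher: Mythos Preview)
The paper does not prove this theorem; it is quoted verbatim as the inverse localization theorem from \cite[Theorem~2.1]{torus} and invoked as a black box to deduce Theorem~\ref{T.torus} from Theorem~\ref{T.Acta2}. So there is no ``paper's own proof'' to compare against.

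That said, your sketch is essentially the strategy carried out in \cite{torus}: approximate the given Beltrami field locally by a finite superposition of circularly polarized plane waves with wavevectors on the sphere of radius~$\la$, then replace each wavevector by a nearby integer vector of norm~$J$ using the equidistribution of lattice points on large spheres (Duke's theorem), and adjust the polarizations accordingly. The oddness of~$J$ is used exactly as you say, to ensure $J^2\equiv 1\pmod 8$ so that the Legendre obstruction is avoided and the equidistribution applies. One small point: your first step, ``expand in vector spherical harmonics and keep finitely many modes,'' does not by itself produce a finite sum of plane waves; the truncated expansion consists of spherical Bessel functions times spherical harmonics. In \cite{torus} this is handled by writing each such mode as a Herglotz-type integral of plane waves over $\SS^2$ and then discretizing that integral via the equidistribution of $\mathcal Z_J/J$, which is where the Riemann-sum error and the lattice-point discrepancy enter together. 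Once that intermediate step is made explicit, your outline is correct and coincides with the argument in the cited reference; the quantitative rate $O(J^{-\gamma})$ you mention is not needed, as the qualitative equidistribution statement already suffices for the claim as stated.
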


\section*{Acknowledgments}

A.E.\ and D.P.-S.\ are respectively supported in part by the ERC
Starting Grants 633152 and 335079. A.L.\ is
supported by the Knut och Alice Wallenbergs stiftelse KAW
2015.0365. D.P.S.\ is supported in part by the grant MTM-2016-76702-P
of the Spanish Ministry of Science. This work is also partly supported by the ICMAT--Severo Ochoa grant SEV-2015-0554.

\appendix
\section{The Poincar\'e map of a harmonic field}

In this section we compute, perturbatively in the small
parameter~$\ep$, the Poincar\'e map of the unique harmonic field $h$
in the tube $\cT_\ep$ introduced in Subsection~\ref{sec:harmo}. This
analysis is important for two reasons. First, when the Beltrami
field $v$ in Subsection~\ref{ssec:lBF} is tangent to the boundary
(i.e. the normal datum is $f=0$), it has the form $v=h+\cO(\ep^3)$,
which is the case considered in~\cite{Acta}. The results in this
Appendix then recover the main theorem in~\cite{Acta}, and sharpen it
by showing the existence of an almost full measure set of invariant
tori in the interior of the vortex tube. Second, in the computations
below we shall assume that the field $h$ is perturbed by another
divergence-free field of size $\cO(\ep^\mu)$, with $\mu\in(2,3)$; this
shows why the subharmonic Melnikov method we used in the proof of the
main theorem does not work in this setting, which is the reason for
which we  introduced a normal component of order $\cO(\ep^3)$ on the boundary of the tube in Subsection~\ref{S.normal} to modify the frequency function of the Beltrami field $v$. All along this appendix, we shall use the coordinates $(\alpha,y)\in \SS^1_\ell\times\DD$ and the notation introduced in Subsection~\ref{S.Belt1}.

Let $\tilde h$ be a vector field in $\cT_\ep$ of the form
$$
\tilde h=h+Y
$$
where $Y$ is a smooth divergence-free vector field in the tube such that $Y=\cO(\ep^\mu)$, $\mu\in(2,3)$, and $h$ is the harmonic field of the tube. Following Subsection~\ref{S:Poinc}, we introduce the isochronous vector field
$$
X:=\frac{\tilde h}{\tilde h_\alpha}
$$
to compute the Poincar\'e map $\Pi$ of the field $\tilde h$ at the section $\{\alpha=0\}$. Notice that $\tilde h_\alpha=1+\cO(\ep)$, so $X$ is well defined, as well as the Poincar\'e map $\Pi:\DD_R\to\DD$, $R<1$, which is a diffeomorphism onto its image. The explicit expression of $X$ up to order $\cO(\ep^3)$ is obtained from Proposition~\ref{prop:BeltramiX} by setting $u_r^{(1)}=u_r^{(2)}=u_\theta^{(1)}=u_\theta^{(2)}=0$.

The first result of this appendix shows that $X$ (and hence $\tilde h$) has an approximate first integral, without any further assumptions on the curve $\gamma$. In the statement, we denote by $X(I)$ the action of the vector
field~$X$ (viewed as a first order differential operator) on the
scalar function~$I$ {(which should not be mistaken for the
action variable introduced in Section~\ref{S:KAM:case1})}.

\begin{proposition}\label{P.fi}
The smooth function defined on
$\SS^1_\ell\times\DD$ by
\[
I (\al,r,\theta):=
I^{(0)}(r) + \ep I^{(1)}(\al,r,\theta) +
\ep^2 I^{(2)}(\al,r,\theta)\,,
\]
with
\begin{align*}
I^{(0)} := {} & \frac{r^2}{2} \,, \\
I^{(1)} := {} & \frac{3(r^3-r) }{8} \kappa \cos \theta \,, \\
I^{(2)} := {} & 
- r \int_0^\al
{H_1(s,r) ds}
 + \frac{9(r^4-r^2)}{32} \kappa^2  + \frac{17(r^4-r^2)}{96} \kappa^2  \cos 2\theta\,,
\end{align*}
is an approximate first integral of the field~$X$ in the sense that
\[
X( I) = \cO(\ep^\mu)\,.
\]
If $Y=0$, then $h(I)=\cO(\ep^3)$.
\end{proposition}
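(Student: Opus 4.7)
The plan is to expand $X(I)$ as a power series in $\ep$ and verify order by order that the coefficients vanish up to the required error. I would write
\[
X(I) = \partial_\alpha I + X_r\,\partial_r I + X_\theta\,\partial_\theta I,
\]
substitute the ansatz for $I$ together with the components of $X$ derived from Proposition~\ref{prop:BeltramiX} (with $u_r^{(j)} = u_\theta^{(j)} = 0$, since in the Appendix the perturbation is the divergence-free field $Y$ rather than a correction coming from boundary data), and collect powers of $\ep$.

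The first step is to dispose of $Y$. Since $Y = \cO(\ep^\mu)$ is divergence-free and enters $X$ only through $X_r = (h_r + Y_r)/\tilde h_\alpha$ and $X_\theta = (h_\theta + Y_\theta)/\tilde h_\alpha$, and since $\nabla I = r\partial_r + \cO(\ep)$, the contribution of $Y$ to $X(I)$ is $\cO(\ep^\mu)$. Hence it suffices to establish the sharper assertion $X(I) = \cO(\ep^3)$ when $Y=0$; both claims of the proposition then follow (the second one using $h = h_\alpha X$ with $h_\alpha = 1+\cO(\ep)$).

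For $Y=0$ the order $\ep^0$ term vanishes trivially because $I^{(0)}$ depends only on $r$. At order $\ep^1$ the equation reduces to $\partial_\alpha I^{(1)} - \tau\,\partial_\theta I^{(1)} + rX_r^{(1)} = 0$, which follows by direct substitution. The order $\ep^2$ identity
\[
\partial_\alpha I^{(2)} - \tau\,\partial_\theta I^{(2)} + X_r^{(1)}\partial_r I^{(1)} + X_\theta^{(1)}\partial_\theta I^{(1)} + r X_r^{(2)} = 0
\]
is the main computation: the $\alpha$-derivative of the primitive $-r\int_0^\alpha H_1(s,r)\,ds$ produces precisely $-rH_1(\alpha,r)$, which cancels the $rH_1$ appearing in $rX_r^{(2)}$; the remaining contributions involve only products of $\kappa$, $\kappa'$, $\tau$ with trigonometric polynomials in $\theta$ of degree at most two, and can be balanced by the $\kappa^2$ and $\kappa^2\cos 2\theta$ pieces of $I^{(2)}$ (with coefficients determined uniquely up to an $\alpha$-independent function of~$r$).

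The main obstacle is the order-$\ep^2$ bookkeeping. Two structural facts make the construction go through. First, the mysterious function $H_2(\alpha,r)$ appearing in $h_\alpha^{(2)}$ would a priori obstruct a construction of this type, but it drops out of the rescaled field: it is absent from $X_r^{(2)} = h_r^{(2)} - h_r^{(1)}h_\alpha^{(1)}$ altogether, and in $X_\theta^{(2)}$ the $\pm\tau H_2$ contributions arising from $h_\theta^{(2)}$ and $\tau h_\alpha^{(2)}$ cancel. Second, the primitive $-r\int_0^\alpha H_1(s,r)\,ds$ defines an $\ell$-periodic smooth function on $\SS^1_\ell\times\DD$ only thanks to the zero-mean property of $H_1$ established in Proposition~\ref{prop:HF}; without this property no approximate first integral at order $\ep^2$ would exist.
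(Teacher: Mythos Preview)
Your proposal is correct and follows essentially the same route as the paper: write $X(I)=a_0+\ep a_1+\ep^2 a_2+\cO(\ep^\mu)$, then verify $a_0=a_1=a_2=0$ by direct substitution of the formulas from Proposition~\ref{prop:BeltramiX} (with the $u$-terms set to zero), exactly as the paper does. Your additional structural remarks---the cancellation of $H_2$ in $X_\theta^{(2)}$ via the $-\tau H_2+\tau H_2$ mechanism, and the necessity of the zero-mean property of $H_1$ for the $\ell$-periodicity of the primitive $-r\int_0^\al H_1$---are accurate and illuminating, but they do not change the argument.
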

\begin{proof}
Let us write
\[
X(I)=:a_0+\ep a_1+\ep^2 a_2+\cO(\ep^\mu)\,.
\]
Using the explicit expressions of $I^{(i)}$, $X_r^{(i)}$, and
$X_\theta^{(i)}$ and gathering the powers of $\ep$ we find
\begingroup
\allowdisplaybreaks
\begin{align*}
a_0 = {} & \frac{\pd I^{(0)}}{\pd \al} -  \tau \frac{\pd I^{(0)}}{\pd \te} =0\,,\\
a_1 = {} &
\frac{\pd I^{(1)}}{\pd \al}
+ \frac{\pd I^{(0)}}{\pd r}  X_r^{(1)}
- \tau \frac{\pd I^{(1)}}{\pd \te}
+ \frac{\pd I^{(0)}}{\pd \te}  X_\theta^{(1)} \\
           = {} & \frac{3(r^3-r)}{8} \kappa'  \cos\theta - r \left[ \frac{3r^2-3}{8}(\tau \kappa \sin\theta+\kappa' \cos\theta) \right]
                + \tau\frac{3 (r^3-r) }{8}\kappa \sin \theta  = 0\,, \\
a_2 = {} & \frac{\pd I^{(2)}}{\pd \al} + \frac{\pd I^{(1)}}{\pd r}  X_r^{(1)} + \frac{\pd I^{(0)}}{\pd r}  X_r^{(2)}
                 -\tau \frac{\pd I^{(2)}}{\pd \te} + \frac{\pd I^{(1)}}{\pd \te}  X_\theta^{(1)} + \frac{\pd I^{(0)}}{\pd\te}  X_\theta^{(2)} \\
           = {} & - r H_1 + \frac{9(r^4-r^2)}{16} \kappa \kappa' + \frac{17(r^4-r^2)}{48}\kappa \kappa'  \cos 2\theta \\
                & \qquad \qquad \qquad - \left[ \frac{3(3r^2-1)}{8} \kappa  \cos\theta \right]\cdot\left[ \frac{3r^2-3}{8}(\tau \kappa \sin \theta+\kappa' \cos\theta\right] \\
                & \qquad \qquad \qquad \qquad+ r \left[ \frac{r-r^3}{6}(\tau \kappa^2 \sin 2\theta + \kappa \kappa' \cos 2\theta) + \frac{3(r^3-r)}{8} \kappa \kappa'+H_1 \right] \\
                & \qquad \qquad \qquad \qquad \qquad \qquad+ \frac{17(r^4-r^2)}{48} \tau\kappa^2  \sin 2\theta \\
                & \qquad \qquad \qquad\qquad+ \left[ -\frac{3(r^3-r) }{8}\kappa \sin \theta \right]\cdot\left[ \frac{r^3-3}{8r}(-\tau \kappa \cos\theta+\kappa' \sin \theta)\right]=0
\end{align*}
\endgroup
To conclude, notice that if $Y=0$ then
$X(I)-a_0-\ep a_1-\ep^2 a_2$ is automatically of order $\cO(\ep^3)$,
proving the claim.
\end{proof}

We shall next explicitly compute the Poincar\'e map $\Pi(r_0,\theta_0)$ following the same reasoning as in Subsection~\ref{S:KAM:case2}, up to terms
of order $\cO(\ep^3)$. To state the result we use the function $T(s)$ defined in the aforementioned subsection.

\begin{proposition}\label{prop:Poin1}
In polar coordinates, the Poincar\'e map is
\begin{align*}
 \Pi_r(r_0,\theta_0) &= r_0 + \ep \Pi_r^{(1)}(r_0,\theta_0) + \ep^2 \Pi_r^{(2)}(r_0,\te_0) + \ep^\mu \Pi_r^{(\mu)}(r_0,\te_0) + \cO(\ep^3)\,, \\
 \Pi_\theta(r_0,\theta_0)&= \theta_0+T_0+\ep \Pi_\theta^{(1)}(r_0,\theta_0) + \ep^2 \Pi_\theta^{(2)}(r_0,\theta_0) + \ep^\mu \Pi_\te^{(\mu)}(r_0,\te_0) + \cO(\ep^3) \,,
\end{align*}
where
\begingroup
\allowdisplaybreaks
\begin{align*}
\Pi_r^{(1)}(r_0,\theta_0) &:= \frac{3-3r_0^2}{8} \kappa(0) \big[\cos (\theta_0+T_0)-\cos \theta_0 \big] \, , \\
\Pi_\theta^{(1)}(r_0,\theta_0)& :=\frac{r_0^2-3}{8r_0}\kappa(0)\big[\sin
                            (\theta_0+T_0)-\sin\theta_0 \big]\,,\\
\Pi_r^{(2)}(r_0,\theta_0) & := \frac{-55 r_0^4+46 r_0^2 -27}{768 r_0} \kappa(0)^2 (\cos 2(\theta_0+T_0)-\cos 2\te_0) \,\\
& \qquad \qquad \qquad +\frac{9r_0-3r_0^3}{32} \kappa(0)^2 \cos \te_0 (\cos (\te_0+T_0)-\cos \te_0) \\
& \qquad \qquad \qquad \qquad-\frac{3 r_0^4-12r_0^ 2+9}{64 r_0} \kappa(0)^2 \sin \te_0 (\sin (\te_0+T_0)-\sin \te_0)\,,\\
\Pi_\theta^{(2)}(r_0,\theta_0) &:= -\frac{12-5r_0^2}{32}\int_0^\ell
{\kappa (s)^2\,\tau(s)\, ds}
+\frac{3(r_0^4+2r_0^2-3)}{64r_0^2}\kappa(0)^2 \cos\theta_0 \sin (\theta_0+T_0)\\
& -\frac{(3-r_0^2)^2}{64r_0^2}\kappa(0)^2\sin\theta_0 \cos(\theta_0+T_0) +\frac{27-50r_0^2+25r_0^4}{384r_0^2}\kappa(0)^2\sin {2(\theta_0+T_0)}\\
& \qquad \qquad \qquad \qquad \qquad \qquad \qquad \qquad+\frac{27+14r_0^2-31r_0^4}{384r_0^2}\kappa(0)^2\sin {2\theta_0}\,,\\
\Pi_r^{(\mu)}(r_0,\te_0) &:= \int_0^\ell X_r^{(\mu)}(s,r_0,\te_0+T(s)) ds \,, \\
\Pi_\te^{(\mu)}(r_0,\te_0) &:= \int_0^\ell X_\te^{(\mu)} (s,r_0,\te_0+T(s))] ds \,.
\end{align*}
\endgroup
Moreover, $\Pi$ preserves the area measure on the disk given by
\begin{equation*}
B \tilde h_\al|_{\al=0}\, r \,dr \, d\theta = (1 + \ep \kappa(0) r \cos \theta
+ \cO(\ep^2))\, r \,dr \, d\theta\,.
\end{equation*}
\end{proposition}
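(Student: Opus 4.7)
The strategy is a direct perturbative integration of the (isochronous) trajectories of the rescaled field $X=\tilde h/\tilde h_\al$, which by construction has $X_\al\equiv 1$ and whose remaining components are given by Proposition~\ref{prop:BeltramiX} with $u_r^{(i)}=u_\te^{(i)}=0$ (since the boundary normal datum vanishes) and with the extra contribution $\ep^\mu X_r^{(\mu)}, \ep^\mu X_\te^{(\mu)}$ produced by $Y$. Writing
\[
r(s)=r_0+\ep r^{(1)}(s)+\ep^2 r^{(2)}(s)+\ep^\mu r^{(\mu)}(s)+\cO(\ep^3),
\]
and analogously for $\te(s)$ with zeroth order $\te^{(0)}(s)=\te_0+T(s)$, the Poincar\'e map is simply $\Pi(r_0,\te_0)=(r(\ell),\te(\ell))$. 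The expansion of the ODE $\dot r=X_r,\ \dot\te=X_\te$ in powers of $\ep$ yields a cascade of linear equations that can be integrated by quadrature.

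First I would do the first-order step. Here
\[
\dot r^{(1)}=X_r^{(1)}[s],\qquad \dot\te^{(1)}=X_\te^{(1)}[s],\qquad [s]:=(s,r_0,\te_0+T(s)),
\]
and substituting the explicit formulas for $X_r^{(1)},X_\te^{(1)}$ from Proposition~\ref{prop:BeltramiX} one integrates using the identity
\[
\frac{d}{ds}\bigl[\ka(s)\cos(\te_0+T(s))\bigr]=\ka'\cos(\te_0+T(s))-\ka\tau\sin(\te_0+T(s)),
\]
and its sine analogue. This reduces $r^{(1)}(\ell)$ and $\te^{(1)}(\ell)$ to boundary terms at $s=0$ and $s=\ell$, and recalling $T(0)=0$, $T(\ell)=T_0$, $\ka(\ell)=\ka(0)$ gives precisely the expressions for $\Pi_r^{(1)},\Pi_\te^{(1)}$ claimed in the statement. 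The $\ep^\mu$ terms are even simpler: at leading order the flow is frozen at the unperturbed trajectory, so $\Pi_r^{(\mu)}=\int_0^\ell X_r^{(\mu)}[s]\,ds$ and $\Pi_\te^{(\mu)}=\int_0^\ell X_\te^{(\mu)}[s]\,ds$.

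The main obstacle is the second-order step, where
\[
\dot r^{(2)}=X_r^{(2)}[s]+\frac{\pd X_r^{(1)}}{\pd r}[s]\,r^{(1)}(s)+\frac{\pd X_r^{(1)}}{\pd\te}[s]\,\te^{(1)}(s),
\]
and the analogous equation for $\dot\te^{(2)}$, yield long trigonometric polynomials in $\te_0+T(s)$ with coefficients built out of $\ka,\ka',\tau$ and $H_1,H_2$. The plan is to group the integrand systematically according to the frequency in $\te_0$: the nonresonant harmonics $e^{\pm i(\te_0+T(s))}$ and $e^{\pm 2i(\te_0+T(s))}$ can again be written as exact derivatives, reducing their contribution to boundary terms evaluated at $s=0$ and $s=\ell$ (which produce the products of $\sin/\cos$ of $\te_0$ and $\te_0+T_0$ in the statement), while the resonant (zero-harmonic) pieces, which cannot be turned into exact derivatives, integrate to genuine integrals over $[0,\ell]$. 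A key cancellation is that the apparently problematic terms involving the unknown functions $H_1$ and $H_2$ enter $\dot r^{(2)}$ and $\dot\te^{(2)}$ only through combinations whose resonant parts cancel (the $H_1$ terms cancel between $X_r^{(2)}$ and $\frac{\pd X_r^{(1)}}{\pd r}r^{(1)}$, and $H_2$ has zero $\al$-mean by Proposition~\ref{prop:HF}); the only nontrivial zero-harmonic contribution is the one proportional to $\int_0^\ell \ka^2\tau\,ds$ in $\Pi_\te^{(2)}$. After this accounting, all the coefficients match those in the statement.

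Finally, the area-preservation assertion reduces to a standard computation. The field $\tilde h$ is divergence-free and the Euclidean volume form in the coordinates $(\al,r,\te)$ is $\ep^2 B\,r\,dr\,d\te\,d\al$; a direct calculation (or Proposition~\ref{P:area} applied to the harmonic setting) shows that the time-$\ell$ map of the normalised field $X=\tilde h/\tilde h_\al$ preserves the restriction of $\tilde h_\al\,\ep^2 B\,r\,dr\,d\te$ to the section $\{\al=0\}$, which up to the irrelevant constant $\ep^2$ is $B\tilde h_\al\big|_{\al=0}\,r\,dr\,d\te$. Substituting $B=1-\ep\ka r\cos\te$ and $\tilde h_\al=1+\cO(\ep)$ at $\al=0$ gives the stated density $1+\ep\ka(0)r\cos\te+\cO(\ep^2)$.
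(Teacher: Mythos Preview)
Your overall strategy is exactly the one the paper uses: write the flow of the isochronous field $X$ as integral equations, expand in powers of $\ep$, integrate the first-order terms as exact derivatives of $\ka\cos(\te_0+T)$ and $\ka\sin(\te_0+T)$, and at second order separate exact-derivative (boundary) contributions from the genuine zero-harmonic piece $\int_0^\ell\ka^2\tau$. The area-preservation argument and the treatment of the $\ep^\mu$ terms are also identical to the paper's.

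However, your account of how the unknown functions $H_1,H_2$ disappear is wrong, and if you tried to execute it as stated it would not close. You claim that $H_1$ cancels between $X_r^{(2)}$ and $\frac{\pd X_r^{(1)}}{\pd r}\,r^{(1)}$, but $X_r^{(1)}=-\frac{3r^2-3}{8}(\tau\ka\sin\te+\ka'\cos\te)$ contains no $H_1$ at all, so neither does its $r$-derivative; the $H_1$ contribution to $\dot r^{(2)}$ survives in the integrand and is removed only because $\int_0^\ell H_1(\si,r_0)\,d\si=0$ (the zero-mean property recorded in Proposition~\ref{prop:HF}). Conversely, $H_2$ never reaches the Poincar\'e-map computation: it enters $h_\al^{(2)}$ and $h_\te^{(2)}$ as $H_2$ and $-\tau H_2$ respectively, and these cancel already in the quotient $v_\te/v_\al$ when forming $X_\te^{(2)}$ in Proposition~\ref{prop:BeltramiX}, so invoking its zero $\al$-mean is unnecessary. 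Swap these two mechanisms and your sketch becomes a correct outline of the paper's proof.
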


\begin{proof}
As in Subsection~\ref{S:KAM:case2}, to obtain the Poincar\'e
map it suffices to compute the integral curves of the field~$X$ at
time~$\ell$. For this we will write the ODEs as integral
equations. Since $\al(s)=s$, this system reads simply as
\begin{align*}
r(s)   = {} & r_0
+ \sum_{j=1,2,\mu} \ep^j \int_0^s X_r^{(j)}(\sigma,r(\si),\te(\si))\,d\sigma
+ \cO(\ep^3), \\
\te(s) = {} & \te_0 +T(s)
+ \sum_{j=1,2,\mu} \ep^j \int_0^s X_\te^{(j)}(\sigma,r(\si),\te(\si))\, d\sigma
+ \cO(\ep^3),
\end{align*}
where we will assume throughout that $s$ takes values in $ [0,\ell]$.

To solve these equations
perturbatively, let us introduce the
notation
\begin{equation*}\label{eq:exp:rt}
\begin{split}
r(s)   &=:
r^{(0)}(s) + \ep r^{(1)} (s)
+\ep^2 r^{(2)}(s) + \ep^\mu r^{(\mu)} (s)+\cO(\ep^3) \, , \\
\te(s) &=:  \te^{(0)}(s) + \ep \te^{(1)}(s)
+ \ep^2 \te^{(2)}(s) +\ep^\mu \te^{(\mu)}(s)+
\cO(\ep^3) \, .
\end{split}
\end{equation*}
Using the explicit expressions of the zeroth-order terms one immediately
obtains that
\[
r^{(0)}(s) = r_0 \, , \qquad
\te^{(0)}(s) = \te_0 + T(s) \,.
\]

To compute the first-order terms, for notational convenience we will
denote by
\[
X^{(i)}_j[\si]:=X^{(i)}_j(\si,r_0,\te_0+T(\si))
\]
the component $X^{(i)}_j$ of the field~$X$ evaluated on the
unperturbed integral curve $(\si,r_0,\te_0+T(\si))$. A similar
notation will be used with the partial derivatives of the components of~$X$. With this
notation, the first-order terms can be readily shown to be
\begingroup
\allowdisplaybreaks
\begin{align*}
r^{(1)} (s)
= {} & \int_0^sX_r^{(1)}[\si] \, d\sigma \\
= {} &
-\frac{3 r_0^2 -3 }{8} \int_0^s \left(\tau(\sigma) \kappa (\sigma)\sin (\te_0+T(\sigma))+
\kappa'(\si) \cos (\te_0+T(\sigma)) \right) \, d\sigma \\
= {} &
\frac{3-3 r_0^2  }{8} \int_0^s \frac d{d\si}\Big(\kappa (\si)
\cos(\te_0+T(\si)) \Big)\, d\sigma \\
= {} & \frac{3-3r_0^2}{8} \left[ \kappa (s) \cos(\te_0+T(s))-\kappa(0) \cos
\te_0 \right] \, . \\
\te^{(1)} (s)
= {} & \int_0^sX_\te^{(1)}[\si] \, d\sigma \\
= {} &
\frac{r_0^2 -3 }{8 r_0}\int_0^s  \left(-\tau(\sigma) \kappa (\sigma)\cos
(\te_0+T(\sigma))+
\kappa'(\si) \sin (\te_0+T(\sigma)) \right) \, d\sigma \\
= {} & \frac{r_0^2-3}{8 r_0} \left[ \kappa (s) \sin(\te_0+T(s))-\kappa(0) \sin
\te_0 \right] \, .
\end{align*}
\endgroup

The computation of  $r^{(2)}(s)$ goes along the same lines:
\begingroup
\allowdisplaybreaks
\begin{align}
r^{(2)} (s)
= {} &
\int_0^s
\bigg(X_r^{(2)}[\si]
+\frac{\pd X_r^{(1)}}{\pd r} [\si]\, r^{(1)}(\sigma)
+\frac{\pd X_r^{(1)}}{\pd \te}[\si]\, \te^{(1)}(\sigma)\bigg) \, d\sigma \nonumber \\
= {} &
-\left[ \frac{r_0^3-r_0}{6} + \frac{9(r_0-r_0^3)}{64} + \frac{(3 r_0^2-3)(r_0^2-3)}{128 r_0} \right]
\int_0^s \nonumber \\
& \quad
[\tau(\sigma) \kappa^2(\sigma) \sin (2(\theta_0+T(\sigma))) \nonumber \\
&\qquad \qquad \qquad \qquad\qquad+
\kappa(\sigma) \kappa'(\sigma) \cos (2(\theta_0+T(\sigma)))]\, d\sigma \nonumber \\
& -\frac{9 r_0^3-9r_0}{32} \ka(0) \cos \theta_0
\int_0^s [\tau(\sigma) \kappa (\sigma) \sin (\theta_0+T(\sigma)) \nonumber \\
&\qquad \qquad \qquad \qquad \qquad \qquad \qquad \qquad \qquad+
\kappa'(\sigma) \cos(\theta_0+T(\sigma))] \, d \sigma \nonumber \\
& +\frac{-3r_0^4+12 r_0^2 - 9}{64 r_0} \ka(0) \sin \theta_0
\int_0^s [-\tau(\sigma) \kappa (\sigma) \cos (\theta_0+T(\sigma)) \nonumber \\
&\qquad \qquad \qquad \qquad \qquad \qquad \qquad \qquad \qquad+
\kappa'(\sigma) \sin(\theta_0+T(\sigma))] \, d \sigma \nonumber \\
& +
\left[ \frac{3(r_0^3-r_0)}{8} - \frac{9(r_0-r_0^3)}{64} + \frac{(3 r_0^2-3)(r_0^2-3)}{128 r_0}\right]
\int_0^s \kappa(\sigma) \kappa'(\sigma) \, d \sigma \nonumber \\
& \qquad \qquad \qquad \qquad \qquad \qquad \qquad \qquad \qquad+ \int_0^s H_1(\sigma,r_0)\,d\sigma\nonumber  \\
= {} &
-\frac{19 r_0^4-46 r_0^2 + 27}{770 r_0} \int_0^s\frac d{d\si}\Big(
\kappa(\sigma)^2 \cos (2(\theta_0+T(\sigma)))\Big)\, d\sigma
\label{eq:comp:theta2} \\
& -\frac{9r_0^3-9r_0}{32} \ka(0) \cos \theta_0
\int_0^s \frac d{d\si}[
\kappa(\sigma) \cos(\theta_0+T(\sigma))] \, d \sigma \nonumber \\
& +\frac{-3r_0^4+12 r_0^2 - 9}{64 r_0} \ka(0) \sin \theta_0
\int_0^s \frac d{d\si}[
\kappa(\sigma) \sin(\theta_0+T(\sigma))] \, d \sigma \nonumber \\
& +\frac{69 r_0^4-78 r_0^2 +9}{256 r_0}
\int_0^s \frac d{d\si}\big(\kappa(\sigma)^2 \big)\, d \sigma
+ \int_0^s H_1(\sigma,r_0)\,d\sigma \,.\nonumber
\end{align}
\endgroup
Integrating the derivatives with respect to~$\si$ and using that
$H_1$ has zero mean (that is, $\int_0^\ell H_1(\si,r)\, d\si=0$)
we readily arrive
at the formula for $\Pi_r^{(2)}$ presented in the statement.

Likewise,
\begingroup
\allowdisplaybreaks
\begin{align*}
\te^{(2)} (s)
= {} &
\int_0^s
\bigg(X_\te^{(2)}[\si]
+\frac{\pd X_\te^{(1)}}{\pd r} [\si] r^{(1)}(\sigma)
+\frac{\pd X_\te^{(1)}}{\pd \te}[\si] \te^{(1)}(\sigma)\bigg) \,d\sigma  \\
= {} &
\left[ \frac{7 r_0^2-8}{48} + \frac{(r_0^2+3)(3-3r_0^2)}{128 r_0^2} + \frac{(r_0^2-3)^2}{128 r_0^2}\right]
\int_0^s
[-\tau(\sigma) \kappa^2(\sigma) \cos (2(\theta_0+T(\sigma)))\\
&\qquad \qquad \qquad \qquad \qquad \qquad \qquad+
\kappa(\sigma) \kappa'(\sigma) \sin (2(\theta_0+T(\sigma)))] \, d\sigma
\\
& + \frac{3 r_0^4+6r_0^2 -9}{64 r_0^2} \kappa(0) \cos\theta_0
\int_0^s [-\tau(\sigma) \kappa (\sigma) \cos (\theta_0+T(\sigma))\\
&\qquad \qquad \qquad \qquad \qquad \qquad \qquad \qquad+
\kappa'(\sigma) \sin(\theta_0+T(\sigma))]  \, d \sigma \\
& - \frac{r_0^4 -6r_0^2+9}{64 r_0^2} \kappa(0) \sin \te_0
\int_0^s [\tau(\sigma) \kappa (\sigma) \sin (\theta_0+T(\sigma))\\
&\qquad \qquad +
\kappa'(\sigma) \cos(\theta_0+T(\sigma))] \, d \sigma \\
&
-
\left[ \frac{3-r_0^2}{8} + \frac{(r_0^2+3)(3-3r_0^2)}{128 r_0^2} - \frac{(r_0^2-3)^2}{128 r_0^2}\right]
\int_0^s \tau(\sigma) \kappa(\sigma)^2 \, d
  \sigma\\
= {} &
\frac{25 r_0^4 - 50 r_0^2 + 27}{484 r_0^2 }
\int_0^s\frac d{d\si}\big(
\kappa(\sigma)^2 \sin (2(\theta_0+T(\sigma)))\big) \, d\sigma
\\
& + \frac{3 r_0^4+6r_0^2 -9}{64 r_0^2} \kappa(0) \cos\theta_0
\int_0^s \frac d{d\si}[
\kappa(\sigma) \sin(\theta_0+T(\sigma))]  \, d \sigma \\
& - \frac{r_0^4 -6r_0^2+9}{64 r_0^2} \kappa(0) \sin \te_0
\int_0^s \frac d{d\si}[
\kappa(\sigma) \cos(\theta_0+T(\sigma))] \, d \sigma \\
& - \frac{12-5r_0^2}{32} \int_0^s \tau(\sigma) \kappa(\sigma)^2 \, d \sigma\,.
\end{align*}
\endgroup
Carrying out the integrations we then arrive at the expression for $\Pi_\te^{(2)}$
presented in the statement.

The expression for $\Pi^{(\mu)}_i$ is obvious because
\[
r^{(\mu)}(s)=\int_0^s X^{(\mu)}_r[\si]\, d\si\,,\qquad \te^{(\mu)}(s)=\int_0^s X^{(\mu)}_\te[\si]\, d\si\,.
\]
Since $\tilde h$ is divergence-free (so its flow preserves the volume $B\,
r\, d\al\, dr\,d\te$) and $X:=\tilde h/\tilde h_\al$, the same proof as in
\cite[Proposition 7.3]{Acta} shows that the Poincar\'e map preserves
the area measure $B\tilde h_\al|_{\al=0}\, r\, dr\,d\te$.
\end{proof}

It is useful to have the expression of the Poincar\'e map not only in
polar coordinates $(r_0,\te_0)$, as above, but also in the coordinates
$(I,\te_0)$. Specifically, one
can parametrize the points of the disk $\DD$ by the angle~$\te_0$ and the
approximate first integral~$I$, and the Poincar\'e map then reads as follows. Notice that the variable~$I$ ranges in an interval of the form
\begin{equation*}
\cI_\ep:=\bigg\{I:O(\ep)<I<\frac12+\cO(\ep)\bigg\}\,.
\end{equation*}
Since there is no risk of confusion, we shall write $r\equiv r_0$ and $\theta\equiv \theta_0$.
\begin{corollary}\label{C.Poincare}
The Poincar\'e map $\Pi$ reads in the coordinates $(I,\te)$
as
\begin{equation}\label{eq:Poincare:I}
\begin{split}
 \Pi_I(I,\theta) = {} & I + \ep^\mu \widehat\Pi_I^{(\mu)} (I,\te) + \cO(\ep^3) \, ,\\
 \Pi_\theta (I,\theta)= {} &\theta+T_0+
\ep  \widehat\Pi_\theta^{(1)}(I,\theta) + \ep^2  \widehat\Pi_\theta^{(2)} (I,\theta)
+ \ep^\mu \widehat\Pi_\te^{(\mu)} (I,\te)
+ \cO(\ep^3) \, ,
\end{split}
\end{equation}
where
the functions $ \widehat\Pi_\theta^{(1)}$, $ \widehat\Pi_\theta^{(2)}$,
$ \widehat\Pi_I^{(\mu)}$ and $ \widehat\Pi_\theta^{(\mu)}$ are given by
\begingroup
\allowdisplaybreaks
\begin{align*}
 \widehat\Pi_\theta^{(1)} (I,\theta) &:= \frac{2I-3}{8 \sqrt{2I}} \kappa(0) \big[ \sin (\theta+T_0) -\sin \theta \big] \, ,\\
 \widehat\Pi_\theta^{(2)} (I,\theta) &:=-\frac{6-5 I}{16}\int_0^\ell \kappa (\al)^2\,\tau(\al)\, d\al
+\frac{(27-100 I+100 I^2)}{768 I}\kappa(0)^2\sin {2(\theta+T_0)}
\\
&- \frac{4I^2-12I+9}{128 I}\kappa(0)^2\sin\theta \cos(\theta+T_0)
 -\frac{11I-8}{96}\kappa(0)^2\sin {2\theta}\,,\\
\widehat \Pi_I^{(\mu)}(I,\te) &:= \sqrt{2I} \int_0^\ell \tilde h_r^{(\mu)}(\sigma,\sqrt{2I},\te+T(\sigma)) \, d\sigma \,, \\
\widehat\Pi_\te^{(\mu)}(I,\te) &:= \int_0^\ell [\tilde h_\te^{(\mu)}(\sigma,\sqrt{2I},\te+T(\sigma)) + \tau \tilde h_\al^{(\mu)} (\sigma,\sqrt{2I},\te+T(\sigma))] \, d\sigma \,.
\end{align*}
\endgroup
\end{corollary}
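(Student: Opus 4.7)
The plan is to change variables on the section $\{\al=0\}$ from $(r,\te)$ to $(I,\te)$ using the approximate first integral of Proposition~\ref{P.fi}, and then substitute into the polar Poincar\'e map of Proposition~\ref{prop:Poin1}.

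First I would invert the relation $I=I(0,r,\te)$, observing that the $\al$-integral term in $I^{(2)}$ vanishes at $\al=0$. Writing $r=\sqrt{2I}+\ep\rho_1(I,\te)+\ep^2\rho_2(I,\te)+\cO(\ep^\mu)$ and matching powers of $\ep$ in the defining identity gives $\rho_1(I,\te)=-\tfrac{3(2I-1)}{8}\ka(0)\cos\te$, together with an analogous closed form for $\rho_2$ expressible in terms of $\ka(0)$, $I$ and trigonometric polynomials in $\te$.

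The $I$-component is then immediate from Proposition~\ref{P.fi}: along a trajectory of $X$ one has $\tfrac{dI}{ds}=X(I)|_{\mathrm{traj}}=\cO(\ep^\mu)$, and since $\ka$, $\tau$ and $H_1$ are all $\ell$-periodic in $\al$, integrating from $0$ to $\ell$ yields $\Pi_I(I,\te)-I=\int_0^\ell X(I)\,ds=\cO(\ep^\mu)$. This automatically kills any would-be $\cO(\ep)$ or $\cO(\ep^2)$ correction without any computation. The leading $\ep^\mu$ contribution comes entirely from the perturbation $Y$ acting on $I^{(0)}=r^2/2$: to this order $X(I)|_{\ep^\mu}=r\cdot X_r^{(\mu)}=r\,\tilde h_r^{(\mu)}$, so evaluating the integral on the unperturbed curve $\si\mapsto(\si,r_0,\te_0+T(\si))$ with $r_0=\sqrt{2I}$ produces the stated $\widehat\Pi_I^{(\mu)}$.

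For the $\te$-component I would substitute the inverted expansion for $r_0$ into the formulas for $\Pi_\te^{(1)},\Pi_\te^{(2)},\Pi_\te^{(\mu)}$ from Proposition~\ref{prop:Poin1} and collect like powers of $\ep$. At order $\ep$ this gives $\widehat\Pi_\te^{(1)}(I,\te)=\Pi_\te^{(1)}(\sqrt{2I},\te)$, which matches the stated formula directly. At order $\ep^2$ one obtains $\widehat\Pi_\te^{(2)}(I,\te)=\Pi_\te^{(2)}(\sqrt{2I},\te)+\rho_1(I,\te)\,\pd_r\Pi_\te^{(1)}(\sqrt{2I},\te)$, and trigonometric simplification using product-to-sum identities produces the compact form in the statement. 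The $\ep^\mu$ term follows exactly as for $I$, using $X_\te^{(\mu)}=\tilde h_\te^{(\mu)}+\tau\,\tilde h_\al^{(\mu)}$ from Proposition~\ref{prop:BeltramiX}.

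The main obstacle will be the $\ep^2$ trigonometric bookkeeping in $\widehat\Pi_\te^{(2)}$: although mechanical, it generates numerous products of single- and double-angle sines and cosines whose consolidation into the compact stated form is tedious. Everything else is a short consequence of the approximate-first-integral property, which is exactly why Proposition~\ref{P.fi} is the crucial input that makes the statement so clean in the $(I,\te)$ variables.
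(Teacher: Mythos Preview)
Your proposal is correct and follows essentially the same approach as the paper: invert $I=I(0,r,\te)$ to get $r=\sqrt{2I}-\tfrac{3(2I-1)}{8}\ep\ka(0)\cos\te+\cO(\ep^2)$, use the approximate-first-integral property of Proposition~\ref{P.fi} to see that the $I$-component has no corrections below order~$\ep^\mu$, and obtain $\widehat\Pi_\te^{(2)}$ as $\Pi_\te^{(2)}(\sqrt{2I},\te)$ plus the first-order chain-rule correction $\rho_1\,\pd_r\Pi_\te^{(1)}$. The only minor imprecision is that your periodicity remark for the $I$-component really uses the zero-mean property of $H_1$ (so that $\int_0^\al H_1$ vanishes at $\al=\ell$ as well as at $\al=0$), not just its $\ell$-periodicity; this is exactly what is recorded in Proposition~\ref{prop:HF}.
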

\begin{proof}
Proposition~\ref{prop:Poin1} obviously implies that if we plug the
approximate first integral of Proposition~\ref{P.fi}, then
\begin{equation}\label{eq:integral_map}
I(0,\Pi_r(r,\theta),\Pi_\theta(r,\theta))=I(0,r,\theta)+\cO(\ep^\mu)\,.
\end{equation}

It will be useful to express things in terms of the approximate first
integral $I$ and the angle $\theta$. To this end, notice that if $\ep$ is small enough, we can invert the expression of the first
integral in order to write $r$ as
\begin{equation}\label{eq:r_depend_I}
r= \sqrt{2I} -\frac{3(2I-1)}{8} \ep \kappa(0) \cos \theta + \cO(\ep^2)\,.
\end{equation}

Using \eqref{eq:integral_map} and \eqref{eq:r_depend_I}, it turns out
that the Poincar\'e map $ \Pi$ can be written in the variables
$(I,\theta)$ as
\begin{equation}\label{eq:Poincare:I}
\begin{split}
 \Pi_I(I,\theta) = {} & I + \ep^\mu  \Pi_I^{(\mu)} (I,\theta) + \cO(\ep^3)\, ,\\
 \Pi_\theta (I,\theta)= {} &\theta+T_0+
\ep  \widehat\Pi_\theta^{(1)}(I,\theta) + \ep^2  \widehat\Pi_\theta^{(2)} (I,\theta) +\ep^\mu  \widehat\Pi_\theta^{(\mu)} (I,\theta) + \cO(\ep^3) \, .
\end{split}
\end{equation}
Here the function
\[
\widehat\Pi_\theta^{(1)}(I,\te) := \frac{2I-3}{8 \sqrt{2I}} \kappa(0) \big[
\sin (\theta+T_0) -\sin \theta \big]
\]
is just the function
$\Pi_\te^{(1)}(r,\te)$ defined in Proposition~\ref{prop:Poin1} and
expressed in terms of the variables $(I,\te)$ (in this change of
variables, one must also evaluate at $\ep=0$ to ensure that it does
not depend on~$\ep$).  In turn, the function
\begin{gather*}
 \widehat\Pi_\theta^{(2)} (I,\theta) :=-\frac{6-5 I}{16}\int_0^\ell \kappa (\al)^2\,\tau(\al)\, d\al
+\frac{(27-100 I+100 I^2)}{768 I}\kappa(0)^2\sin {2(\theta+T_0)}
\\
- \frac{4I^2-12I+9}{128 I}\kappa(0)^2\sin\theta \cos(\theta+T_0)
 -\frac{11I-8}{96}\kappa(0)^2\sin {2\theta}
\end{gather*}
is calculated using both the expression of the function
$\Pi_\theta^{(2)} (r,\theta)$ in terms of the variables $(I,\te)$
(again setting $\ep=0$ in the change of variables) and first order
terms coming from the expression of $\Pi_\te^{(1)}(r,\te)$ in these variables.

To conclude, we observe that the terms of order $\mu$ do not get any
additional contribution coming from lower order terms, so
$\widehat\Pi_\te^{(\mu)}(I,\te)$ and $\widehat\Pi_I^{(\mu)}(I,\te)$ are obtained by evaluating the corresponding term of
order~$\mu$ of the Poincar\'e map in polar coordinates at $r=\sqrt{2I}$. In the formulas of the statement we have used that
$X_r^{(\mu)}= \tilde h_r^{(\mu)}$ and $X_\theta^{(\mu)} = \tilde h_\theta^{(\mu)}+\tau \tilde h_\al^{(\mu)}$ (the same computation as in Proposition~\ref{prop:BeltramiX}).
\end{proof}

It is remarkable that without any further assumptions on the curvature and torsion (compare with the resonant hypothesis~\eqref{eq:T0pq}), the Poincare map of the field $\tilde h$ can be written as an $\cO(\ep^\mu)$ perturbation of an integrable twist map, analogous to the expression of the iterated Poincar\'e map of the Beltrami field $v$ in Proposition~\ref{eq:Prop:Poin:res}. This is achieved by finding an appropriate angle variable.

\begin{proposition}\label{prop:freq}
The Poincar\'e map has the approximate frequency function
\begin{equation}\label{eq:trunc:freq2}
 \omega(I) := T_0 - \ep^2 \frac{6-5 I}{16} \int_0^\ell \kappa(\al)^2
\tau(\al) \, d\al \,,
\end{equation}
in the sense that there is a new $\SS^1$-valued angular variable $\phi\equiv\phi(I,\te)=\te+\cO(\ep)$
on the punctured disk
such that the Poincar\'e map in the variables $(I,\phi)$ is in normal form modulo a small error:
\begin{equation}\label{Pican}
\begin{split}
 \Pi_I(I,\phi) = {} & I + \ep^\mu \Pi_I^{(\mu)} (I,\phi) + \cO(\ep^3) \, ,\\
 \Pi_\phi (I,\phi)= {} &\phi+\omega(I)
+ \ep^\mu \Pi_\phi^{(\mu)} (I,\phi)
+ \cO(\ep^3) \, .
\end{split}
\end{equation}
Here
\begin{align*}
\Pi_I^{(\mu)}(I,\phi) &:= \sqrt{2I} \int_0^\ell \tilde h_r^{(\mu)}(\sigma,\sqrt{2I},\phi+T(\sigma)) \, d\sigma \,, \\
\Pi_\phi^{(\mu)}(I,\phi)&:= \int_0^\ell
                                 [\tilde h_\te^{(\mu)}(\sigma,\sqrt{2I},\phi+T(\sigma))
                                 + \tau \tilde h_\al^{(\mu)}
                                 (\sigma,\sqrt{2I},\phi+T(\sigma))] \,
                                 d\sigma \,.
\end{align*}
\end{proposition}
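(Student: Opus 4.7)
The plan is to construct a near-identity, angle-dependent change of angular variable of the form
\[
\phi = \te + \ep\, g_1(I,\te) + \ep^2 g_2(I,\te),
\]
with $g_1,g_2$ smooth and $2\pi$-periodic in $\te$, that turns the Poincar\'e map given in Corollary~\ref{C.Poincare} into the stated normal form. This is a Birkhoff-type normalization: substituting the ansatz into $\phi\circ\Pi$, Taylor-expanding around $(I,\te+T_0)$, and matching with $\phi(I,\te)+\omega(I)$ produces a sequence of cohomological equations for $g_j$ over the rigid rotation $\te\mapsto\te+T_0$, which I will solve order by order. No correction of the action coordinate $I$ is needed, since $\Pi_I-I$ is already $\cO(\ep^\mu)$ in the coordinates of Corollary~\ref{C.Poincare}.

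At order $\ep$, the matching condition reads
\[
g_1(I,\te) - g_1(I,\te+T_0) = \widehat\Pi_\te^{(1)}(I,\te).
\]
The right-hand side is manifestly a coboundary for the rotation by $T_0$, since
\[
\widehat\Pi_\te^{(1)}(I,\te) = \frac{2I-3}{8\sqrt{2I}}\,\kappa(0)\bigl[\sin(\te+T_0)-\sin\te\bigr],
\]
so the explicit choice $g_1(I,\te) = -\tfrac{2I-3}{8\sqrt{2I}}\kappa(0)\sin\te$ does the job for every value of $T_0$; no Diophantine-type hypothesis is needed at this order.

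At order $\ep^2$, the matching condition becomes
\[
g_2(I,\te)-g_2(I,\te+T_0) = \widehat\Pi_\te^{(2)}(I,\te) + \partial_\te g_1(I,\te+T_0)\,\widehat\Pi_\te^{(1)}(I,\te) - \omega_2(I),
\]
where $\omega(I)=:T_0+\ep^2\omega_2(I)+\cO(\ep^3)$. Averaging in $\te$ forces $\omega_2(I)$ to equal the mean of the right-hand side. The key calculation, which I expect to be the main obstacle, is to verify the exact cancellation of the constants proportional to $\kappa(0)^2\sin T_0$: the product $\sin\te\cos(\te+T_0)$ appearing inside $\widehat\Pi_\te^{(2)}$ contributes $-\tfrac12\sin T_0$ to the $\te$-average, while the bilinear term $\partial_\te g_1(I,\te+T_0)\,\widehat\Pi_\te^{(1)}$ contributes an equal and opposite amount, both with the same $I$-dependent prefactor $\tfrac{(2I-3)^2}{256I}\kappa(0)^2$; the two cancel and leave
\[
\omega_2(I) = -\frac{6-5I}{16}\int_0^\ell \kappa(\al)^2\tau(\al)\,d\al,
\]
which is precisely the formula in the proposition. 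Once $\omega_2$ is fixed, the remainder has zero $\te$-mean and is a trigonometric polynomial in $\te$ supported on the Fourier modes $\{\pm 1,\pm 2\}$; it can be inverted by Fourier series provided $e^{iT_0}$ and $e^{2iT_0}$ are distinct from $1$, a generic condition on $\ga$ that imposes no further assumption beyond those already in play.

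To close the argument, since $\phi = \te + \cO(\ep)$ and $\mu\in(2,3)$, substituting $\te\mapsto\phi$ in the $\cO(\ep^\mu)$ pieces $\widehat\Pi_I^{(\mu)}$ and $\widehat\Pi_\te^{(\mu)}$ from Corollary~\ref{C.Poincare} introduces only errors of order $\ep^{\mu+1}\subset\cO(\ep^3)$, producing exactly the expressions $\Pi_I^{(\mu)}(I,\phi)$ and $\Pi_\phi^{(\mu)}(I,\phi)$ stated in the proposition. Finally one notes that $\phi = \te + \cO(\ep)$ is a bona fide smooth $\SS^1$-valued coordinate on the punctured disk, which follows immediately from the periodicity of $g_1,g_2$ in $\te$ and the smallness of $\ep$.
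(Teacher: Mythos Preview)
Your approach is essentially the paper's: a Poincar\'e--Lindstedt/Birkhoff normalization via $\phi=\te+\ep g_1+\ep^2 g_2$, leading to cohomological equations over the rotation $\te\mapsto\te+T_0$. Your derivation of the equations and the computation of the $\kappa(0)^2\sin T_0$ cancellation yielding $\omega_2(I)=-\tfrac{6-5I}{16}\int_0^\ell\kappa^2\tau$ are correct.

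There is one point where the paper is sharper and where your argument leaves a small gap. At order $\ep^2$ you invert the cohomological operator by Fourier series under the assumption $e^{iT_0},\,e^{2iT_0}\neq 1$, calling this ``a generic condition \ldots\ beyond those already in play''. But the proposition carries no hypothesis on $T_0$, so this is an extra assumption. The paper avoids it by noticing, exactly as you did at order~$\ep$, that the right-hand side at order~$\ep^2$ is \emph{already} a coboundary: after subtracting $\omega_2(I)$, it collapses to a single expression $C(I)\,\kappa(0)^2[\sin 2(\te+T_0)-\sin 2\te]$, so $g_2(I,\te)=-C(I)\,\kappa(0)^2\sin 2\te$ solves the equation for every $T_0$ (the paper gives $C(I)=\tfrac{27-100I+100I^2}{768I}$ up to sign convention). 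Two small related corrections: the second-order right-hand side has only modes $\pm 2$ (the $\sin\te\cos(\te+T_0)$ term and the bilinear term are products of first harmonics, hence live in modes $0,\pm 2$), not $\{\pm1,\pm2\}$; and the would-be small divisor at mode $\pm2$ is therefore the only one at stake. Replacing your Fourier-inversion step with the explicit coboundary observation removes the spurious hypothesis and makes the proof cover all $T_0$, matching the statement.
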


\begin{proof}
We will apply the Poincar\'e--Lindstedt method with a new angular
variable of the form
\[
\phi(\te,I):=
\theta + \ep \phi^{(1)}(I,\theta) + \ep^2 \phi^{(2)}(I,\theta)\,.
\]
By Equation~\eqref{eq:Poincare:I}, this automatically implies that
\[
\Pi_I(I,\phi)=I+\cO(\ep^\mu)
\]
in these coordinates $(I,\phi)$ (and $\Pi_I(\te,I) =I+\cO(\ep^3)$ if $X^{(\mu)}=0$), so we only need to worry about the
angular component of the Poincar\'e map.

Imposing $\Pi_\phi(I,\phi)= \phi+\om(I) +\cO(\ep^\mu)$ is tantamount
to demanding that
\begin{equation}\label{condphiom}
\phi+\om(I)=\Pi_\te(I,\phi + \om(I))+\cO(\ep^\mu)\,.
\end{equation}
To compute $\phi$ perturbatively, let us take the ansatz
\[
\omega(I) = \omega^{(0)}(I) + \ep \omega^{(1)}(I) + \ep^2 \omega^{(2)}(I) \,,
\]
where the $I$-dependent functions $\om^{(j)}(I)$ are to be chosen later.
Since
\begin{align}
 \Pi_\theta(I,\phi) = {} & \theta + T_0 + \ep \phi^{(1)}(I,\theta) +
 \ep^2 \phi^{(2)}(I,\theta) + \ep  \Pi_\theta^{(1)}(I,\theta+\ep
 \phi^{(1)}(I,\theta)) \notag\\
&\qquad \qquad \qquad \qquad \qquad \qquad \qquad \qquad\qquad+\ep^2
\widehat \Pi_\theta^{(2)}(I,\theta) + \cO(\ep^\mu)\label{Pimu}
\end{align}
and
\begin{align*}
\phi(\theta+\omega(I),I) = {} & \theta + \omega^{(0)}(I) + \ep
\omega^{(1)}(I) + \ep^2 \omega^{(2)}(I) + \ep \phi^{(1)}(I,\theta+\ep
\omega^{(0)}(I)) \notag\\
&\qquad \qquad \qquad \qquad \qquad \qquad \qquad \qquad \qquad+ \ep^2 \phi^{(2)}(I,\theta) + \cO(\ep^3)\,,
\end{align*}
one can expand Equation~\eqref{condphiom} in~$\ep$ and identify terms to obtain
\[
\omega^{(0)}=T_0
\]
and the following homological equations for $\phi^{(1)}(I,\theta)$ and $\phi^{(2)}(I,\theta)$:
\begin{align}
\phi^{(1)}(I,\theta+{T_0})-\phi^{(1)}(I,\theta) = {} &
\Pi_\theta^{(1)}(I,\theta)-\omega^{(1)}(I)\, , \label{eq:Coho1}\\
\phi^{(2)}(I,\theta+{T_0})-\phi^{(2)}(I,\theta) = {} & \frac{\pd
\Pi_\theta^{(1)}(I,\theta)}{\pd \theta} \phi^{(1)}(I,\theta) + \widehat
\Pi_\theta^{(2)}(I,\theta) \notag\\
&\qquad\qquad- \frac{\pd \phi^{(1)} (I,\theta)}{\pd \theta}
\omega^{(1)}(I) - \omega^{(2)}(I)\,. \label{eq:Coho2}
\end{align}

To solve the homological equations one needs to choose
$\om^{(j)}(I)$ so that the right hand side has zero average, as this
is a necessary condition for the existence of
solutions. In particular, in Equation~\eqref{eq:Coho1} we readily see
that
\[
\int_0^{2\pi}\Pi_\theta^{(1)}(I,\theta)
\, d\theta = 0\,,
\]
so we infer that
\[
\omega^{(1)}(I) = 0\,.
\]
Equation \eqref{eq:Coho1} then becomes
\[
\phi^{(1)}(I,\theta+{T_0})-\phi^{(1)}(I,\theta) = \frac{2I-3}{8 \sqrt{2I}} \kappa(0)
[\sin (\theta+{T_0}) -\sin \theta] \,,
\]
so it is clear that the solution is given by
\begin{equation}\label{eq:sol:Coho1}
\phi^{(1)}(I,\theta)=\frac{2I-3}{8 \sqrt{2I}} \kappa(0)\sin \theta
\end{equation}
up to an inessential $I$-dependent constant that we can
take to be zero.

To solve \eqref{eq:Coho2}, we compute the average of the right hand
side of the equation to derive that
\begin{align*}
\omega^{(2)}(I) &= \frac{1}{2\pi} \int_0^{2 \pi}
\left( \frac{\pd
\Pi_\theta^{(1)}(I,\theta)}{\pd \theta} \phi^{(1)}(I,\theta) + \widehat
\Pi_\theta^{(2)}(I,\theta) \right) \, d\te \\
&
=
-\frac{6-5 I}{16} \int_0^l
\kappa(\al)^2 \tau(\al) \, d\al \,.
\end{align*}
The function $\phi^{(2)}$ is then readily shown to be
\[
\phi^{(2)}(I,\theta) =\frac{27-100 I + 100 I^2}{768 I}
\kappa(0)^2\sin 2\te
\]
up to an $I$-dependent constant. The theorem follows upon realizing
that, as $\phi=\te+\cO(\ep)$, the $\cO(\ep^\mu)$ terms in the
Poincar\'e map are given by the same expression as in
Corollary~\ref{C.Poincare} after substituting $\te$ by~$\phi$.
\end{proof}

We can now combine Proposition~\ref{prop:freq} with KAM theory to prove that for a generic
curve~$\ga$, the field~$\tilde h$ has a set of ergodic invariant tori
contained in $\cT_\ep$ of almost full measure and given by small
deformations of level sets of the approximate first
integral~$I$. Specifically, using the measure $d\al\, dy$ on
$\SS^1_\ell\times\DD$ so that $|\SS^1_\ell\times\DD|=\pi\ell$, one can state this result as follows:

\begin{proposition}\label{P.tori}
Suppose that
\[
\int_0^\ell \kappa(\al)^2 \tau(\al) \, d\al\neq0\,.
\]
Then for small enough~$\ep$ the field~$\tilde h$ has a set of
invariant tori of the form
$$
\{I + \cO(\ep^\mu)=\text{constant} \}\,,
$$
contained in $\SS^1_\ell\times\DD$,
whose measure is at least $\pi\ell -C\ep^{\frac \mu2-1}$.
\end{proposition}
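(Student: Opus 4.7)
The plan is to deduce Proposition~\ref{P.tori} from the normal form of the Poincar\'e map provided by Proposition~\ref{prop:freq} by applying a small-twist version of Moser's invariant curve theorem, in exactly the same spirit as the proof of Theorem~\ref{T:KAM}. First I would observe that Proposition~\ref{P:area} asserts that the Poincar\'e map~$\Pi$ of $\tilde h$ at $\{\al=0\}$ preserves a smooth area measure on $\DD_R$ that is $\ep$-close to $r\,dr\,d\te$. Since the change of variables $(r,\te)\mapsto(I,\phi)$ built in Corollary~\ref{C.Poincare} and Proposition~\ref{prop:freq} is an $\ep$-small perturbation of the identity (and smooth away from $r=0$), the map $\Pi$ written in $(I,\phi)$ still preserves an equivalent smooth area form, and takes the normal form
\[
\Pi(I,\phi) = \bigl(I + \ep^\mu \Pi_I^{(\mu)}(I,\phi),\ \phi+\omega(I)+\ep^\mu\Pi_\phi^{(\mu)}(I,\phi)\bigr)+\cO(\ep^3)
\]
on $\cI_\ep\times\SS^1$.

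Next I would verify the twist condition. Differentiating~\eqref{eq:trunc:freq2} gives
\[
\omega'(I)=\ep^2\,\frac{5}{16}\int_0^\ell \kappa(\al)^2\tau(\al)\,d\al,
\]
which by hypothesis is a \emph{nonzero} constant of size exactly $\ep^2$ on the whole range of~$I$; in particular Moser's twist condition holds uniformly in $\cI_\ep$. The perturbation in~\eqref{Pican} is of size $\cO(\ep^\mu)$ with $2<\mu<3$, so after rescaling the action to normalize the twist to order~one, the perturbation has relative size $\cO(\ep^\mu/\ep^2)=\cO(\ep^{\mu-2})$, which tends to zero with~$\ep$. The finite-regularity version of Moser's twist theorem (see e.g.~\cite{Siegel}, as invoked in the proof of Theorem~\ref{T:KAM}) then produces, for every $\ep$ sufficiently small, a family of $\Pi$-invariant curves indexed by Diophantine rotation numbers whose union covers $\cI_\ep\times\SS^1$ up to a set of measure bounded by $C\sqrt{\ep^{\mu-2}}=C\ep^{\mu/2-1}$.

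Finally I would translate invariant curves of $\Pi$ into invariant tori of~$\tilde h$. Each such curve suspends, under the flow of the isochronous vector field~$X$, into an invariant torus of~$\tilde h$ sitting in $\SS^1_\ell\times\DD$; since $\phi-\te=\cO(\ep)$ and $I-\tfrac12 r^2=\cO(\ep)$, and since the invariant curves of $\Pi$ in the $(I,\phi)$-chart are $\cO(\ep^\mu)$-close to circles $\{I=\text{const}\}$ (this is the standard KAM estimate on the deformation of the invariant circles), the resulting tori have the advertised form $\{I+\cO(\ep^\mu)=\text{const}\}$. Multiplying the planar measure bound by the length~$\ell$ of the $\al$-circle gives a total measure of invariant tori at least $\pi\ell-C\ep^{\mu/2-1}$.

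The main technical point is ensuring that one is in the regime where the small-twist KAM theorem genuinely applies: one needs the relative perturbation $\cO(\ep^{\mu-2})$ to dominate the $\cO(\ep^3)$ tail in~\eqref{Pican} when measured against the twist $\ep^2$, i.e.\ $\ep^3/\ep^2=\ep\ll\ep^{\mu-2}$, which is automatic since $\mu<3$. Everything else is an essentially routine adaptation of the arguments already used in Section~\ref{S.trajectories}, with the simplification that here we only need the existence of invariant tori and not the subharmonic Melnikov analysis.
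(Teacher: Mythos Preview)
Your proposal is correct and follows essentially the same approach as the paper's own proof: pass to the $(I,\phi)$ normal form from Proposition~\ref{prop:freq}, observe that the twist $\omega'(I)$ is a nonzero constant of order~$\ep^2$, and apply Moser's small-twist theorem~\cite{Siegel} to an $\cO(\ep^\mu)$ perturbation. The only cosmetic difference is that the paper cites Proposition~\ref{prop:Poin1} (rather than Proposition~\ref{P:area}) for the invariant area form of the Poincar\'e map of~$\tilde h$, and omits the auxiliary discussion of the rescaling and the $\cO(\ep^3)$ tail that you spell out.
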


\begin{proof}
By Proposition~\ref{prop:Poin1}, the Poincar\'e map of $X$ (and hence of $\tilde h$) at the section $\{\alpha=0\}$ preserves an
area measure of the form
$(1+\cO(\ep))\, dy$.
By Proposition~\ref{prop:freq}, there are polar coordinates $(I,\phi)$ on
the disk $\DD$ such that the Poincar\'e map is given by
\[
(\Pi_I(I,\phi),\Pi_\phi(I,\phi))=(I,\phi+\om(I))+\cO(\ep^\mu)\,.
\]
Moser's twist condition then reads
\[
\om'(I)=\frac{5\ep^2}{16} \int_0^\ell \kappa(\al)^2
\tau(\al) \, d\al \neq0\,.
\]
If the above
integral is nonzero, then the twist is a nonzero constant
of order $\ep^2$. Since the Poincar\'e map is an order $\cO(\ep^\mu)$
perturbation of an integrable twist map, it follows (see
e.g.~\cite{Siegel}) that for small enough $\ep$ all the disk $\DD$ but a
set of measure at most $C\ep^{\frac\mu2-1}$ is covered by quasi-periodic invariant
curves of the Poincar\'e map. In terms of the vector field~$\tilde h$, this obviously
means that the whole $\es\times\DD$ but a set of measure at most
$C\ep^{\frac\mu2-1}$ is covered by ergodic invariant tori of~$\tilde h$, as claimed.
\end{proof}

It is important to stress why we cannot effectively apply the Melnikov
method to the perturbation $\tilde h$ of the harmonic field, as this is why we had to introduce in Section~\ref{S.trajectories} a
carefully chosen $\cO(\ep^3)$~boundary datum to modify the frequency
function of the Beltrami field. The key difficulty is that one has a weak twist condition that one cannot effectively use with the
usual Melnikov theory. Roughly speaking, the reason is that the Melnikov
function involves integrals over a time interval of length of
order~$q$, where $p/q$ is the frequency of an approximate resonant
invariant torus. While in the classical setting the number~$q$ is
fixed, under a weak twist condition this number tends to infinity as
the perturbation parameter tends to zero with a rate that depends on
the arithmetic properties of the frequency function. 
{To put it differently,
the numbers $p$ and~$q$ tend to infinity as $\ep\to0$, and this has
the effect that the integral that one would like to use to define the
Melnikov function is so degenerate that it actually lacks a leading order term to
which one can apply the basic ideas of the Melnikov theory.}

\bibliographystyle{amsplain}

\begin{thebibliography}{99}\frenchspacing

\bibitem{Ar65}
V.I. Arnold, Sur la topologie des \'ecoulements stationnaires des fluides parfaits,
C. R. Acad. Sci. Paris 261 (1965) 17--20.

\bibitem{Ar66}
V.I. Arnold, Sur la g\'eom\'etrie diff\'erentielle des groupes de Lie de
dimension infinie et ses applications \`a l'hydrodynamique des fluides
parfaits, Ann. Inst. Fourier 16 (1966) 319--361.

\bibitem{AK}
V.I. Arnold, B. Khesin, \emph{Topological Methods in
Hydrodynamics}. Springer, New York, 1999.

\bibitem{Bruce}
J.W. Bruce, P.J. Giblin, {\em Curves and singularities}, Cambridge
University Press, Cambridge, 1984.

\bibitem{CMP}
A.  Enciso, A. Luque, D. Peralta-Salas, Stationary phase methods and the splitting of separatrices, Comm. Math. Phys. 368 (2019) 1297--1322.

\bibitem{Annals}
A. Enciso, D. Peralta-Salas, Knots and links in steady solutions of the Euler equation, Ann. of Math. 175 (2012) 345--367.

\bibitem{Acta}
A. Enciso, D. Peralta-Salas, Existence of knotted vortex tubes in
steady Euler flows, Acta Math. 214 (2015) 61--134.

\bibitem{ARMA}
A. Enciso, D. Peralta-Salas, Beltrami fields with a nonconstant
proportionality factor are rare, Arch. Rat. Mech. Anal. 220 (2016) 243--260.

\bibitem{torus}
A. Enciso, D. Peralta-Salas, F. Torres de Lizaur, Knotted structures
in high-energy Beltrami fields on the torus and the sphere,
Ann. Sci. \'Ec. Norm. Sup. 50 (2017) 995--1016.

\bibitem{Farge}
M. Farge, G. Pellegrino, K. Schneider, Coherent vortex extraction in
3D turbulent flows using orthogonal wavelets, Phys. Rev. Lett. 87
(2001) 054501.

\bibitem{GHS12}
A. Granados, S.J. Hogan, T.M. Seara, The Melnikov method and subharmonic orbits in a piecewise-smooth system,
SIAM J. Appl. Dyn. Syst. 11 (2012) 801--830.

\bibitem{He66}
M. H\'enon, Sur la topologie des lignes de courant dans un cas
particulier, C. R. Acad. Sci. Paris 262 (1966) 312--314.

\bibitem{HKS}
G. Huang, V. Kaloshin, A. Sorrentino, On the marked length spectrum of generic strictly convex billiard tables, Duke Math. J. 167 (2018) 175--209.

\bibitem{KS}
V. Kaloshin, A. Sorrentino,  On the local Birkhoff conjecture for convex billiards, Ann. of Math. 188 (2018) 315--380.

\bibitem{Katok}
A. Katok, Lyapunov exponents, entropy and periodic orbits for diffeomorphisms,
Publ. Math. IHES 51 (1980) 137--173.

\bibitem{Kh05}
B. Khesin, Topological fluid dynamics, Notices Amer. Math. Soc. 52
(2005) 9--19.

\bibitem{Irvine}
D. Kleckner, W.T.M.\ Irvine, Creation and dynamics of knotted
vortices, Nature Phys. 9 (2013) 253--258.

\bibitem{Mo14}
K. Moffatt, Helicity and singular structures in fluid dynamics,
Proc. Natl. Acad. Sci. 111 (2014) 3663-3670.

\bibitem{Monchaux}
R. Monchaux, F. Ravelet, B. Dubrulle, A. Chiffaudel, F. Daviaud,
Properties of steady states in turbulent axisymmetric flows,
Phys. Rev. Lett. 96 (2006) 124502.

\bibitem{Yudovich}
A. Morgulis, V.I. Yudovich, G.M. Zaslavsky, Compressible helical
flows, Comm. Pure Appl. Math. 48 (1995) 571--582.

\bibitem{RR}
S. Pinto-de-Carvalho, R. Ram\'\i rez-Ros, Non-persistence of resonant caustics in perturbed elliptic billiards, Ergodic Theory Dynam. Systems 33 (2013) 1876--1890.

\bibitem{Siegel}
C.L. Siegel, J.K. Moser, {\em Lectures on celestial mechanics}, Springer, Berlin, 1995.

\bibitem{Kelvin}
W. Thomson (Lord Kelvin), On vortex atoms, Proc. Roy. Soc. Edin. 6
(1867) 94--105 (reprinted in: {\em Mathematical and physical papers
  IV}, Cambridge University Press, Cambridge, 2011).

\bibitem{ABC}
X.H. Zhao, K.H. Kwek, J.B. Li, K.L. Huang,
Chaotic and resonant streamlines in the ABC flow,
SIAM J. Appl. Math. 53 (1993) 71--77.

\end{thebibliography}

\end{document}